\documentclass[11pt,a4paper]{article}
\usepackage[utf8]{inputenc}
\usepackage[english]{babel}
\usepackage[T1]{fontenc}
\usepackage{amsmath}
\usepackage{amsfonts}
\usepackage{amssymb}
\usepackage{amsthm}
\usepackage{mathrsfs}
\usepackage{dsfont}
\usepackage[left=2cm,right=2cm,top=3cm,bottom=3cm]{geometry}
\usepackage{accents}
\usepackage{enumitem}

\RequirePackage[colorlinks,citecolor=blue,urlcolor=blue]{hyperref}

\usepackage{tikz}
\usepackage{pgfplots}
\usetikzlibrary{arrows,arrows.meta,calc,quotes,angles,shapes,plotmarks}

\newtheorem{theorem}{Theorem}[section]
\newtheorem{lemma}[theorem]{Lemma}
\newtheorem{proposition}[theorem]{Proposition}
\newtheorem{remark}[theorem]{Remark}

\numberwithin{equation}{section}

\DeclareMathOperator{\argmin}{argmin}

\newcommand{\ubar}[1]{\underaccent{\bar}{#1}}

\allowdisplaybreaks

\providecommand{\keywords}[1]
{
	\textbf{\textit{Keywords--}} #1
}

\providecommand{\MSC}[1]
{
	\textbf{\textit{MSC Classification--}} #1
}

\makeatletter
\newcommand{\owntag}[2][\relax]{
  \ifx#1\relax\relax\def\owntag@name{#2}\else\def\owntag@name{#1}\fi
  \refstepcounter{equation}\tag{\theequation, #2}%
  \expandafter\ltx@label\expandafter{eq:\owntag@name}%
  \def\@currentlabel{\theequation, #2}\expandafter\ltx@label\expandafter{Eq:\owntag@name}%
  \def\@currentlabel{#2}\expandafter\ltx@label\expandafter{tag:\owntag@name}%
}
\makeatother

\title{Convergence of Langevin-Simulated Annealing algorithms with multiplicative noise}
\author{Pierre Bras\footnote{Sorbonne Universit\'e, Laboratoire de Probabilit\'es, Statistique et Mod\'elisation, UMR 8001, case 158, 4 pl. Jussieu, F-75252 Paris Cedex 5, France. E-mail: \texttt{pierre.bras@sorbonne-universite.fr} and \texttt{gilles.pages@sorbonne-universite.fr}.} \footnote{Corresponding author.} $\ $ and Gilles Pag\`es\footnotemark[1]}
\date{}

\begin{document}

\maketitle

\begin{abstract}
We study the convergence of Langevin-Simulated Annealing type algorithms with multiplicative noise, i.e. for $V : \mathbb{R}^d \to \mathbb{R}$ a potential function to minimize, we consider the stochastic differential equation $dY_t = - \sigma \sigma^\top \nabla V(Y_t) dt + a(t)\sigma(Y_t)dW_t + a(t)^2\Upsilon(Y_t)dt$, where $(W_t)$ is a Brownian motion, where $\sigma : \mathbb{R}^d \to \mathcal{M}_d(\mathbb{R})$ is an adaptive (multiplicative) noise, where $a : \mathbb{R}^+ \to \mathbb{R}^+$ is a function decreasing to $0$ and where $\Upsilon$ is a correction term. This setting can be applied to optimization problems arising in Machine Learning; allowing $\sigma$ to depend on the position brings faster convergence in comparison with the classical Langevin equation $dY_t = -\nabla V(Y_t)dt + \sigma dW_t$.
The case where $\sigma$ is a constant matrix has been extensively studied however little attention has been paid to the general case.
We prove the convergence for the $L^1$-Wasserstein distance of $Y_t$ and of the associated Euler scheme $\bar{Y}_t$ to some measure $\nu^\star$ which is supported by $\argmin(V)$ and give rates of convergence to the instantaneous Gibbs measure $\nu_{a(t)}$ of density $\propto \exp(-2V(x)/a(t)^2)$. To do so, we first consider the case where $a$ is a piecewise constant function. We find again the classical schedule $a(t) = A\log^{-1/2}(t)$. We then prove the convergence for the general case by giving bounds for the Wasserstein distance to the stepwise constant case using ergodicity properties.
\end{abstract}

\keywords{Stochastic Optimization, Langevin Equation, Simulated Annealing, Neural Networks}

\MSC{62L20, 65C30, 60H35}

\section{Introduction}

Langevin-based algorithms are used to solve optimization problems in high dimension and have gained much interest in relation with Machine Learning. The Langevin equation is a Stochastic Differential Equation (SDE) which consists in a gradient descent with noise. More precisely, let $V :  \mathbb{R}^d \rightarrow \mathbb{R}^+$ be a coercive potential function, then the associated Langevin equation reads
$$ dX_t = -\nabla V(X_t)dt + \sigma dW_t , \ t \ge 0,$$
where $(W_t)$ is a $d$-dimensional Brownian motion and where $\sigma > 0$. Under standard assumptions, the invariant measure of this SDE is the Gibbs measure of density proportional to $e^{-2V(x)/\sigma^2}$ and for small enough $\sigma$, this measure concentrates around $\argmin(V)$ \cite{dalalyan2014} \cite{bras2021}.
Adding a small noise to the gradient descent allows to explore the space and to escape from traps such as local minima or saddle points appearing in non-convex optimization problems \cite{lazarev1992} \cite{dauphin2014}. This noise may also be interpreted as coming from the approximation of the gradient in stochastic gradient descent algorithms.
Such methods have been recently brought up to light again with Stochastic Gradient Langevin Dynamics (SGLD) algorithms \cite{welling2011} \cite{li2015}, especially for the deep learning and the calibration of large artificial neural networks, which is a high-dimensional non-convex optimization problem.

The Langevin-simulated annealing SDE is the Langevin equation where the noise parameter is slowly decreasing to $0$, namely
\begin{equation}
\label{eq:intro:1}
dX_t = -\nabla V(X_t) dt + a(t) \sigma dW_t , \ t \ge 0,
\end{equation}
where $a : \mathbb{R}^+ \to \mathbb{R}^+$ is non-increasing and converges to 0. The idea is that the "instantaneous" invariant measure $\nu_{a(t)\sigma}$ which is the Gibbs measure of density $\propto \exp(-2V(x)/(a(t)^2\sigma^2))$ converges itself to $\argmin(V)$.
This method indeed shares similarities with the original simulated annealing algorithm \cite{laarhoven1987}, which builds a Markov chain from the Gibbs measure using the Metropolis-Hastings algorithm and where the parameter $\sigma$, interpreted as a temperature, slowly decreases to zero over the iterations.

In \cite{chaing1987} and \cite{royer1989} is shown that choosing $a(t) = A \log^{-1/2}(t)$ for some $A>0$ in \eqref{eq:intro:1} guarantees the convergence of $X_t$ to $\nu^\star$ defined as the limit measure of $(\nu_{a(t)})$ as $t \to \infty$ and which is supported by $\argmin(V)$.
\cite{miclo1992} proves again the convergence of the SDE using free energies inequalities. These studies deeply rely on some Poincar\'e and log-Sobolev inequalities and require the following assumptions on the potential function:
$$ \lim_{|x| \rightarrow \infty} V(x) = \lim_{|x| \rightarrow \infty} |\nabla V(x)| = \infty \quad \text{and} \quad \forall x \in \mathbb{R}^d, \ \Delta V(x) \le C + |\nabla V(x)|^2 .$$
\cite{zitt2008} proves that the convergence still holds under weaker assumptions, in particular where the gradient of the potential is not coercive, using weak Poincaré inequalities.
In \cite{gelfand-mitter} is proved the convergence of the associated stochastic gradient descent algorithm.

All these results are established in the so-called additive case, i.e. they highly rely on the fact that $\sigma$ is constant, whereas little attention has been paid to the multiplicative case, i.e. where $\sigma : \mathbb{R}^d \to \mathcal{M}_d(\mathbb{R}) $ is not constant and depends on $X_t$. Allowing $\sigma$ to be adaptive and to depend on the position highly extends the range of applications of Langevin algorithms and such adaptive algorithms are already widely used by practitioners and prove to be faster than non-adaptive algorithms and competitive with standard non-Langevin algorithms or even faster. See Section \ref{subsec:practitioner} where various specifications for $\sigma(x)$ that can be found in the Stochastic Optimization literature are briefly presented, and Section \ref{sec:experiments} where we show results of simulations of the training of an artificial neural network for various choices of $\sigma$. However, to our knowledge, a general result of convergence for Langevin algorithms with multiplicative noise is yet to be proved.
\cite[Proposition 2.5]{pages2020} gives a general formula on $b$ and $\sigma$ so that the associated Gibbs measure is still the invariant measure of the SDE $dX_t = b(X_t)dt + \sigma(X_t)dW_t$; a simple example of acceleration of convergence using non-constant $\sigma$ is then given in \cite[Section 2.4]{pages2020}. More generally, \cite{ma2015} gives a characterization of any SGMCMC (Stochastic Gradient Markov Chain Monte Carlo) algorithm with multiplicative noise and with the corresponding Gibbs measure as a target.
In practice, the matrix $\sigma$ is often chosen so that $\sigma \sigma^\top \simeq (\nabla^2 V)^{-1}$ but approximations are needed because of the high dimensions of the matrix (e.g. only considering diagonal matrices). Still, our results hold also for non-diagonal $\sigma$, which opens the way to algorithms with such $\sigma$.

\medskip

In this paper, we consider the following SDE:
\begin{align*}
& dY_t = -(\sigma \sigma^\top \nabla V)(Y_t)dt + a(t) \sigma(Y_t) dW_t + \left(a^2(t) \left[\sum_{j=1}^d \partial_j(\sigma \sigma^\star)(Y_t)_{ij} \right]_{1 \le i \le d}\right) dt \\
& a(t) = \frac{A}{\sqrt{\log(t)}},
\end{align*}
where the expression of the drift comes from \cite[Proposition 2.5]{pages2020} and where the second drift term is interpreted as a correction term so that $\nu_{a(t)}$ is still the the "instantaneous" invariant measure. This last term boils down to $0$ if $\sigma$ is constant.
The aim of this paper is to prove the convergence for the $L^1$-Wasserstein distance of the law of $Y_t$ to $\nu^\star$ in the setting adopted in \cite{pages2020}, assuming in particular the convex uniformity of the potential outside a compact set and the ellipticity and the boundedness of $\sigma$.
We also prove the convergence of the corresponding Euler-Maruyama scheme with decreasing steps.

Considering the convex condition outside a compact set is in fact quite different from the convex setting and turns out to be more demanding. This setting often appears in optimization problems (see Section \ref{sec:neural_networks}), where a characteristic set - the compact set - contains the interesting features of the model with traps such as local minima, and where outside of this set the loss function is coercive and convex. We give classic examples of neural networks where this setting applies.

\medskip
We adopt a \textit{domino strategy} like in \cite{pages2020}, inspired by proofs of weak error expansion of discretization schemes of diffusion processes, see \cite{talay1990} and \cite{bally1996}. In \cite{pages2020} is proved the convergence of the Euler-Maruyama scheme $\bar{X}$ with decreasing steps $(\gamma_n)$ of an ergodic and homogeneous SDE $X$ with non constant $\sigma$, to the invariant measure of $X$. It then appears that the multiplicative case is much more demanding than the additive case. For a function $f : \mathbb{R}^d \to \mathbb{R}$, the \textit{domino strategy} consists in a step-by-step decomposition of the weak error to produce an upper bound as follows:
\begin{align}
| \mathbb{E}f(\bar{X}_{\Gamma_n}^x) - \mathbb{E}f(X_{\Gamma_n}^x)| & = | \bar{P}_{\gamma_1} \circ \cdots \circ \bar{P}_{\gamma_n} f(x) - P_{\Gamma_n}f(x) | \nonumber \\
\label{eq:domino_strategy}
& \le \sum_{k=1}^n \left|\bar{P}_{\gamma_1} \circ \cdots \circ \bar{P}_{\gamma_{k-1}} \circ (\bar{P}_{\gamma_k}-P_{\gamma_k})\circ P_{\Gamma_n-\Gamma_k}f(x) \right|,
\end{align}
where $P$ and $\bar{P}$ are the transition kernels associated to $X$ and $\bar{X}$ respectively and where $\Gamma_n=\gamma_1+\cdots+\gamma_n$. Then two terms appear: first the "error" term, for large $k$, where the error is controlled by classic weak and strong bounds on the error of an Euler-Maruyama scheme, and the "ergodic" term, for small $k$, where the ergodicity of $X$ is used.

However, we cannot directly apply this strategy of proof to our problem since we consider a non homogeneous SDE $Y$, so we proceed as follows: we consider instead the SDE $X$ where the coefficient $a(t)$ is non-increasing and piecewise constant and where the successive plateaux $[T_{n-1},T_{n})$ of $a$ are increasingly larger time intervals. On each plateau we obtain a homogeneous and uniformly elliptic SDE with an invariant Gibbs distribution $\nu_{a_n}$ where $a_n$ is the constant value of $a$ on $[T_{n-1},T_{n})$, to which a \textit{domino strategy} can be applied. This ellipticity fades with time since $a_n$ goes to $0$ and we need to carefully control its impact on the way the diffusion $X$ gets close to its "instantaneous" invariant Gibbs distribution $\nu_{a_n}$. To this end we have to refine several one step weak error results from \cite{pages2020} and ergodic bounds from \cite{wang2020}. Doing so we derive by induction an upper-bound for the distance between $X_t$ and $\nu^\star$ after each plateau and prove that a coefficient $(a(t))$ of order $\log^{-1/2}(t)$ is a sufficient and generally necessary condition for convergence.
Using this result, we then prove the convergence of $Y_t$ and its Euler-Maruyama scheme $\bar{Y}_t$ by bounding the distance between $X_t$ and $Y_t$ and $X_t$ and $\bar{Y}_t$. We also consider the "Stochastic Gradient case" i.e. where the true gradient cannot be computed exactly and where a noise, which is a sequence of increments of a martingale, is added to the gradient. This case was treated in \cite{gelfand-mitter} in the additive setting.
The process $X$ is used as a tool for the proof of the convergence of $Y_t$, however the convergence of $X_t$ to $\nu^\star$ also has its own interest since the "plateau" method is also used by practitioners.

We also establish a convergence rate which is somehow limited by $\mathcal{W}_1(\nu_{a(t)},\nu^\star)$, which is of order $a(t)$ under the assumption that $\argmin(V)$ is finite and that $\nabla^2 V$ is positive definite at every element of $\argmin(V)$; if $\argmin(V)$ is still finite but if $\nabla^2 V$ is not positive definite at every element of $\argmin(V)$, but if we assume instead that all the elements of $\argmin(V)$ are strictly polynomial minima, then the rate is of order $a(t)^\delta$ for some $\delta \in (0,1)$ \cite{bras2021}. We pay particular attention to the non-definite case, since it was pointed out in \cite{sagun2016} and \cite{sagun2017} that for some optimization problems arising in Machine Learning, the Hessian of the loss function at the end of the training tends out to be extremely singular. Indeed, as the dimension of the parameter which is used to minimize the loss function is large and as the neural network can be over-parametrized, many eigenvalues of the Hessian matrix are close to zero. However, this subject is still new in the Stochastic Optimization literature and needs more theoretical background. 

Still we give sharper bounds on the rate of convergence of the $L^1$-Wasserstein distance between $X$ or $Y$ and $\nu_{a(t)}$ as in practice the optimization procedure stops at some (large) $t$ and the target distribution is actually $\nu_{a(t)}$ instead of $\nu^\star$.

In a next paper, we shall prove the convergence in total variation distance. In this last case, the domino strategy is more complex to implement and requires regularization lemmas, as in \cite{pages2020} which studies the convergence for both distances.

\medskip

The article is organized as follows.
In Section \ref{sec:main-results} we first give the setting and assumptions of the problem we consider. This setting is taken from \cite{pages2020}. We then state our main results of convergence as well as convergence rates. In Section \ref{sec:optimization} we show how this setting applies to some classic optimization problems arising in Machine Learning and present several general choices for $\sigma$ that are used in practice. In Section \ref{sec:langevin} we consider the case where the coefficient $a$ is constant and give convergence rates to the invariant measure taking into account the ellipticity parameter. We also give preliminary lemmas for the rate of convergence of $\nu_{a_n}$ to $\nu^\star$. In Section \ref{sec:X} we prove the convergence of the solution of the SDE where $a$ is piecewise constant, by "plateaux". Using the dependence in $a$ of the rate of convergence to the invariant measure in the ergodic case we prove the convergence to $\argmin(V)$. In Section \ref{sec:Y} we then prove the convergence of the SDE in the case where $a$ is not by plateau but continuously decreasing. This is done by bounding the Wasserstein distance with the "plateau" case and revisiting the lemmas for strong and weak errors from \cite{pages2020}. In Section \ref{sec:bar-Y} and Section \ref{sec:bar-X} we also prove the convergence for the corresponding the Euler-Maruyama schemes. The proofs actually follow the same strategy as the previous one. In Section \ref{sec:experiments} we present experiments of training of neural networks using various specifications for $\sigma$; the algorithms with multiplicative $\sigma$ prove to be faster than the algorithm with constant $\sigma$.

\bigskip

\textsc{Notations}

We endow the space $\mathbb{R}^d$ with the canonical Euclidean norm denoted by $| \boldsymbol{\cdot} |$ and we denote $\langle \cdot, \cdot \rangle$ the associated canonical inner product. For $x \in \mathbb{R}^d$ and for $R>0$, we denote $\textbf{B}(x,R) = \lbrace y \in \mathbb{R}^d : \ |y-x| \le R \rbrace$.

For $M \in (\mathbb{R}^d)^{\otimes k}$, we denote by $\|M\|$ its operator norm, i.e. $\|M\| = \sup_{u \in \mathbb{R}^{d\times k}, \ |u|=1} M \cdot u$. If $M : \mathbb{R}^d \to (\mathbb{R}^d)^{\otimes k}$, we denote $\|M\|_\infty = \sup_{x \in \mathbb{R}^d} \|M(x)\|$.

For $f :\mathbb{R}^d \rightarrow \mathbb{R}$ such that $\min_{\mathbb{R}^d}(f)$ exists, we denote $\text{argmin}(f) = \left\lbrace x \in \mathbb{R}^d : \ f(x) = \min_{\mathbb{R}^d}(f) \right\rbrace$.
We say that $f$ is coercive if $\lim_{|x| \rightarrow \infty} f(x) = + \infty$. 
If $f$ is Lipschitz continuous, we denote by $[f]_{\text{Lip}}$ its Lipschitz constant.
For $k \in \mathbb{N}$ and if $f$ is $\mathcal{C}^k$, we denote by $\nabla ^k f : \mathbb{R}^d \rightarrow (\mathbb{R}^d)^{\otimes k}$ its differential of order $k$.

For a random vector $X$, we denote by $[X]$ its law.

We denote the $L^p$-Wasserstein distance between two distributions $\pi_1$ and $\pi_2$ on $\mathbb{R}^d$:
$$ \mathcal{W}_p(\pi_1, \pi_2) = \inf \left\lbrace \left(\int_{\mathbb{R}^d} |x-y|^p \pi(dx,dy) \right)^{1/p} : \ \pi \in \mathcal{P}(\pi_1,\pi_2) \right\rbrace ,$$
where $\mathcal{P}(\pi_1,\pi_2)$ stands for the set of probability distributions on $(\mathbb{R}^d \times \mathbb{R}^d, \mathcal{B}or(\mathbb{R}^d)^{\otimes 2})$ with respective marginal laws $\pi_1$ and $\pi_2$. For $p=1$, let us recall the Kantorovich-Rubinstein representation of the Wasserstein distance of order 1 \cite[Equation (6.3)]{villani2009}:
$$ \mathcal{W}_1(\pi_1,\pi_2) = \sup \left\lbrace \int_{\mathbb{R}^d} f(x) (\pi_1-\pi_2)(dx) : \ f : \mathbb{R}^d \to \mathbb{R}, \ [f]_{\text{Lip}} = 1 \right\rbrace .$$

For $x \in \mathbb{R}^d$, we denote by $\delta_x$ the Dirac mass at $x$.
For $x$, $y \in \mathbb{R}^d$ we denote $(x,y) = \lbrace ux + (1-u)y, \ u \in [0,1] \rbrace$ the geometric segment between $x$ and $y$.

For $u$, $v \in \mathbb{R}$, we define $u \ \text{mod}(v) = u - v \lfloor u/v \rfloor$.

If $u_n$ and $v_n$ are two real-valued sequences, we write $u_n \asymp v_n$ meaning that $u_n = O(v_n)$ and $v_n = O(u_n)$.

In this paper, we use the notation $C$ to denote a positive real constant, which may change from line to line. The constant $C$ depends on the parameters of the problem: the coefficients of the SDE, the choice of $A$ in $a(t) = A \log^{-1/2}(t)$, the upper bound $\bar{\gamma}$ on the decreasing steps, but $C$ does not depend on $t$ nor $x$.

\section{Assumptions and main results}
\label{sec:main-results}

\subsection{Assumptions}

Let $V : \mathbb{R}^d \rightarrow (0,+\infty)$ be a $\mathcal{C}^2$ potential function such that $V$ is coercive and
\begin{equation}
\label{eq:def_A}
(x \mapsto |x|^2 e^{-2V(x)/A^2}) \in L^1(\mathbb{R}^d) \text{ for some } A>0.
\end{equation}
Then $V$ admits a minimum on $\mathbb{R}^d$. Moreover, let us assume that
\begin{align}
V^\star :=\min_{\mathbb{R}^d} V >0, \quad \argmin(V) = \lbrace x_1^\star, \ldots, x_{m^\star}^\star \rbrace, \quad \forall \ i =1,\ldots,m^\star, \ \nabla^2 V(x_i^\star) >0,
\owntag[eq:min_V]{$\mathcal{H}_{V1}$}
\end{align}
i.e. $\min_{\mathbb{R}^d} V$ is attained at a finite number $m^\star$ of points and in each point the Hessian matrix is positive definite.
We then define for $a \in (0,A]$ the Gibbs measure $\nu_{a}$ of density :
\begin{equation}
\label{eq:def_nu}
\nu_a(dx) = \mathcal{Z}_{a} e^{-2(V(x)-V^\star)/a^2} dx , \quad \mathcal{Z}_{a} = \left( \int_{\mathbb{R}^d} e^{-2(V(x)-V^\star)/a^2} dx \right)^{-1}
\end{equation}
Following \cite[Theorem 2.1]{hwang1980}, the measure $\nu_a$ converges weakly to $\nu^\star$ as $a \to 0$, where $\nu^\star$ is the weighted sum of Dirac measures:
\begin{equation}
\nu^\star = \left(\sum_{j=1}^{m^\star} \left(\det \nabla^2 V(x_j^\star) \right)^{-1/2} \right)^{-1} \sum_{i=1}^{m^\star} \left(\det \nabla^2 V(x_i^\star)\right)^{-1/2} \delta_{x_i^\star}.
\end{equation}

\medskip

We consider the following Langevin SDE in $\mathbb{R}^d$:
\begin{align}
\label{eq:def_Y}
& Y_0^{x_0} = x_0 \in \mathbb{R}^d, \\
& dY_t^{x_0} =  b_{a(t)}(Y_t^{x_0})dt + a(t) \sigma(Y_t^{x_0}) dW_t, \nonumber
\end{align}
where, for $a\ge 0$, the drift $b_a$ is given by
\begin{equation}
\label{eq:def_b}
b_a(x) = -(\sigma \sigma^\top \nabla V)(x) + a^2 \left[\sum_{j=1}^d \partial_j(\sigma \sigma^\top)_{ij}(x) \right]_{1 \le i \le d} =: -(\sigma \sigma^\top \nabla V)(x) + a^2 \Upsilon(x),
\end{equation}
where $W$ is a standard $\mathbb{R}^d$-valued Brownian motion defined on a probability space $(\Omega, \mathcal{A}, \mathbb{P})$, where $ \sigma : \mathbb{R}^d \to \mathcal{M}_{d}(\mathbb{R})$ is $C^2$ and
\begin{equation}
\label{eq:def_a}
a(t) = \frac{A}{\sqrt{\log(t+e)}} 
\end{equation}
where $A$ is defined in \eqref{eq:def_A} and with $\log(e)=1$.
This equation corresponds to a gradient descent on the potential $V$ with preconditioning $\sigma$ and multiplicative noise ; the second term in the drift \eqref{eq:def_b} is a correction term (see \cite[Proposition 2.5]{pages2020}) which is zero for constant $\sigma$.

\medskip

We make the following assumptions on the potential $V$:
\begin{align}
\lim_{|x| \rightarrow + \infty} V(x) = + \infty, \ \ |\nabla V|^2 \le CV \ \text{ and } \sup_{x \in \mathbb{R}^d} || \nabla^2 V(x)|| < + \infty ,
\owntag[eq:V_assumptions]{$\mathcal{H}_{V2}$}
\end{align}
which implies in particular that $V$ has at most a quadratic growth.
Let us also assume that
\begin{align}
\sigma \text{ is bounded and Lipschitz continuous,} \ \nabla^2 \sigma \text{ is bounded}, \ \nabla(\sigma\sigma^\top) \nabla V \text{ is bounded},
\owntag[eq:sigma_assumptions]{$\mathcal{H}_\sigma$}
\end{align}
and that $\sigma$ is uniformly elliptic, i.e.
\begin{equation}
\label{eq:ellipticity}
\exists \ubar{\sigma}_0 > 0, \ \forall x \in \mathbb{R}^d, \ (\sigma \sigma^\top) (x) \ge \ubar{\sigma}_0^2 I_d .
\end{equation}
Assumptions \eqref{Eq:eq:V_assumptions} and \eqref{Eq:eq:sigma_assumptions} imply that $\Upsilon$ is also bounded and Lipschitz continuous and that $b_a$ is Lipschitz continuous uniformly in $a \in [0,A]$. Let the minimal constant $[b]_{\text{Lip}}$ be such that:
$$ \forall a \in [0,A], \ b_a \text{ is } [b]_{\text{Lip}} \text{-Lipschitz continuous} .$$

We make the non-uniform dissipative (or convexity) assumption outside of a compact set: there exists $\alpha_0 >0$ and $R_0 >0$ such that
\begin{align}
\forall x, y \in \textbf{B}(0,R_0)^c, \ \left\langle \left( \sigma \sigma^\top \nabla V\right)(x)-\left(\sigma \sigma^\top \nabla V \right) (y), \ x-y \right\rangle \ge \alpha_0 |x-y|^2.
\owntag[eq:V_confluence]{$\mathcal{H}_{cf}$}
\end{align}

Taking $y \in \textbf{B}(0,R_0)^c$ fixed, letting $|x| \to \infty$ and using the boundedness of $\sigma$, \eqref{Eq:eq:V_confluence} implies that $|\nabla V|$ is coercive.
Using \eqref{Eq:eq:V_assumptions} and the boundedness of $\sigma$, there exists $C>0$ (depending on $A$) such that:
$$ \forall a \in [0,A], \ 1 + |b_a(x)| \le CV^{1/2}(x) .$$

\medskip

Let $(\gamma_n)_{n \ge 1}$ be a non-increasing sequence of varying positive steps. We define $\Gamma_n := \gamma_1 + \cdots + \gamma_n$ and for $t \ge 0$:
$$ N(t) := \min \lbrace k \ge 0 : \ \Gamma_{k+1} > t \rbrace = \max \lbrace k \ge 0 : \ \Gamma_k \le t \rbrace .$$
We make the classical assumptions on the step sequence, namely
\begin{equation}
\gamma_n \downarrow 0, \quad \sum_{n \ge 1} \gamma_n = + \infty \quad \text{and} \quad \sum_{n \ge 1} \gamma_n^2 < + \infty
\owntag[eq:gamma_assumptions]{$\mathcal{H}_{\gamma1}$}
\end{equation}
and we also assume that
\begin{align}
\varpi := \limsup_{n \to \infty} \frac{\gamma_n - \gamma_{n+1}}{\gamma_{n+1}^2} < \infty .
\owntag[eq:gamma_assumptions_2]{$\mathcal{H}_{\gamma2}$}
\end{align}
For example, if $\gamma_n = \gamma_1/n^\alpha$ with $\alpha \in (1/2,1)$ then $\varpi = 0$; if $\gamma_n = \gamma_1/n$ then $\varpi = \gamma_1$.

\medskip

In Stochastic Gradient algorithms, the true gradient is measured with a zero-mean noise $\zeta$, which law only depends on the current position. That is, let us consider a family of random fields $(\zeta_n(x))_{x \in \mathbb{R}^d, n \in \mathbb{N}}$ such that for every $n \in \mathbb{N}$, $(\omega, x) \in \Omega \times \mathbb{R}^d \mapsto \zeta_n(x,\omega)$ is measurable and for all $x \in \mathbb{R}^d$, the law of $\zeta_n(x)$ only depends on $x$ and $(\zeta_n(x))_{n \in \mathbb{N}}$ is an i.i.d. sequence independent of $W$. We make the following assumptions:
\begin{equation}
\label{eq:zeta_assumptions}
\forall x \in \mathbb{R}^d, \ \forall p \ge 1, \ \mathbb{E}[\zeta_{1}(x)] = 0 \quad \text{and} \quad \mathbb{E}[|\zeta_{1}(x)|^p] \le C_p V^{p/2}(x).
\end{equation}
We then consider the Euler-Maruyama scheme with decreasing steps associated to $(Y_t)$:
\begin{align}
\label{eq:def_Y_bar}
& \bar{Y}_0^{x_0} = x_0, \quad \bar{Y}_{\Gamma_{n+1}}^{x_0} = \bar{Y}_{\Gamma_n} + \gamma_{n+1} \left(b_{a(\Gamma_{n})}(\bar{Y}^{x_0}_{\Gamma_{n}}) + \zeta_{n+1}(\bar{Y}_{\Gamma_n}^{x_0}) \right) + a(\Gamma_{n}) \sigma(\bar{Y}_{\Gamma_n}^{x_0})(W_{\Gamma_{n+1}} - W_{\Gamma_n}),
\end{align}
We extend $\bar{Y}^{x_0}_{\boldsymbol{\cdot}}$ on $\mathbb{R}^+$ by considering its genuine continuous interpolation:
\begin{equation}
\label{eq:def_Y_bar_genuine}
\forall t \in [\Gamma_n, \Gamma_{n+1}), \  \bar{Y}^{x_0}_{t} = \bar{Y}^{x_0}_{\Gamma_n} + (t-\Gamma_n) \left(b_{a(\Gamma_n)}(\bar{Y}^{x_0}_{\Gamma_n}) + \zeta_{n+1}(\bar{Y}_{\Gamma_n}^{x_0}) \right) + a(\Gamma_n) \sigma(\bar{Y}^{x_0}_{\Gamma_n}) (W_t - W_{\Gamma_n}) .
\end{equation}

\subsection{Main results}

We now state our main results.

\begin{theorem}
\label{thm:main}
\begin{enumerate}[label=(\alph*)]
\item Let $Y$ be defined in \eqref{eq:def_Y}. Assume \eqref{Eq:eq:min_V}, \eqref{Eq:eq:V_assumptions}, \eqref{Eq:eq:sigma_assumptions}, \eqref{eq:ellipticity} and \eqref{Eq:eq:V_confluence}. If $A$ is large enough, then for every $x_0 \in \mathbb{R}^d$,
$$ \mathcal{W}_1([Y_t^{x_0}],\nu^\star) \underset{t \rightarrow \infty}{\longrightarrow} 0. $$
More precisely, for every $t >0$:
$$ \mathcal{W}_1([Y_t^{x_0}],\nu^\star) \le C \max(1+|x_0|,V(x_0))a(t) $$
and for every $\alpha \in (0,1)$ we have
$$ \mathcal{W}_1([Y_t^{x_0}],\nu_{a(t)}) \le C \max(1+|x_0|,V(x_0))t^{-\alpha} . $$

\item Let $\bar{Y}$ be defined in \eqref{eq:def_Y_bar}. Assume \eqref{Eq:eq:min_V}, \eqref{Eq:eq:V_assumptions}, \eqref{Eq:eq:sigma_assumptions}, \eqref{eq:ellipticity} and \eqref{Eq:eq:V_confluence}. Assume furthermore \eqref{Eq:eq:gamma_assumptions} and \eqref{Eq:eq:gamma_assumptions_2}, that $V$ is $\mathcal{C}^3$ with $\|\nabla^3 V\| \le CV^{1/2}$ and that $\sigma$ is $\mathcal{C}^3$ with $\|\nabla^3(\sigma \sigma^\top) \| \le CV^{1/2}$. If $A$ is large enough then for every $x_0 \in \mathbb{R}^d$,
$$ \mathcal{W}_1([\bar{Y}_t^{x_0}],\nu^\star) \underset{t \rightarrow \infty}{\longrightarrow} 0. $$
More precisely, for every $t >0$:
$$ \mathcal{W}_1([\bar{Y}_t^{x_0}],\nu^\star) \le C\max(1+|x_0|,V^2(x_0)) a(t) ,$$
and for every $\alpha \in (0,1)$ we have
$$ \mathcal{W}_1([\bar{Y}_t^{x_0}],\nu_{a(t)}) \le C\max(1+|x_0|,V^2(x_0))t^{-\alpha} . $$
\end{enumerate}
\end{theorem}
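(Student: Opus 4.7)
The plan is to follow a three-process comparison: introduce the piecewise-constant-temperature SDE $X$ studied later in Section~\ref{sec:X}, and bound
\begin{equation*}
\mathcal{W}_1([Y_t^{x_0}],\nu^\star) \le \mathcal{W}_1([Y_t^{x_0}],[X_t^{x_0}]) + \mathcal{W}_1([X_t^{x_0}],\nu_{a(t)}) + \mathcal{W}_1(\nu_{a(t)},\nu^\star),
\end{equation*}
and likewise for $\bar Y$. Each of the three terms is handled separately, so the theorem reduces to three quantitative estimates whose proofs occupy Sections~\ref{sec:langevin}--\ref{sec:bar-Y}.

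For the intermediate term $\mathcal{W}_1([X_t],\nu_{a(t)})$, I would define plateaux $[T_{n-1},T_n)$ on which $a$ takes the constant value $a_n = a(T_{n-1})$. On a single plateau, $X$ is a homogeneous, uniformly elliptic SDE whose invariant measure is the Gibbs density $\nu_{a_n}$; classical coupling/confluence arguments using the dissipativity \eqref{Eq:eq:V_confluence} yield exponential contraction of the semigroup in $\mathcal{W}_1$, but both the contraction rate and the noise-dependent constants degrade as $a_n\to 0$. The first technical task is thus to revisit the ergodic bounds of \cite{wang2020} and the one-step weak error bounds of \cite{pages2020}, tracking their explicit dependence on $a_n$. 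Then an induction across plateaux, together with the triangle inequality $\mathcal{W}_1(\nu_{a_n},\nu_{a_{n+1}})\le C|a_n-a_{n+1}|$, yields the iterative bound on $\mathcal{W}_1([X_{T_n}],\nu_{a_n})$ and, by choosing $T_n-T_{n-1}$ so that the per-plateau contraction beats the shrinkage of $a_n$, extracts the schedule $a(t)\asymp \log^{-1/2}(t)$ under the condition that $A$ is sufficiently large. Combined with $\mathcal{W}_1(\nu_{a(t)},\nu^\star)\le C a(t)$, obtained via Laplace's method at the non-degenerate minima $x_i^\star$ (using \eqref{Eq:eq:min_V}), this produces the $Ca(t)$ and $t^{-\alpha}$ rates.

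The residual comparisons $\mathcal{W}_1([Y_t],[X_t])$ and $\mathcal{W}_1([\bar Y_t],[X_t])$ are strong error estimates. For $Y$ vs.\ $X$, using a common Brownian motion and identical initial values, the two SDEs differ only by the substitution of $a(s)$ by its plateau value $a(T_{N(s)-1})$; a Gr\"onwall argument based on the uniform Lipschitz constant $[b]_{\mathrm{Lip}}$ and the boundedness of $\sigma$ controls the $L^2$-distance by an integral of $|a(s)-a_{N(s)}|^2$ weighted by moments of $Y$, and the slow decay of $a$ makes this contribution negligible against $a(t)$, provided the plateau widths are chosen accordingly. Moment bounds of the form $\mathbb{E}[V(Y_t)] \le C(1+V(x_0))$ are a prerequisite and follow from the Lyapunov structure induced by \eqref{Eq:eq:V_assumptions} and \eqref{Eq:eq:V_confluence}. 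For $\bar Y$ in part (b), the same decomposition applies but calls for refined one-step strong/weak bounds from \cite{pages2020} made explicit in $a$, along with the $\mathcal{C}^3$ regularity assumed in the statement; the stochastic-gradient noise $\zeta_n$ is absorbed using its conditional zero-mean property and the $V^{p/2}$ moment bound \eqref{eq:zeta_assumptions}, which requires an additional power of $V$ in the constants and explains the $V^2(x_0)$ factor.

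The hard part will be the plateau domino estimate on $\mathcal{W}_1([X_t],\nu_{a(t)})$: the ergodic contraction exponent on each plateau scales unfavourably with $a_n$, so the induction only closes if the plateau lengths grow just fast enough that the mixing beats both the temperature drop and the accumulation of one-step errors; this is precisely what forces the $A/\sqrt{\log t}$ schedule and the lower bound on $A$, in the spirit of \cite{chaing1987} and \cite{royer1989}. Once the $a_n$-explicit refinements of the ergodic estimates and of the weak error bounds are in place, the induction across plateaux and the two Gr\"onwall-type comparison estimates above assemble both parts of the theorem.
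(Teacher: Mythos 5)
Your global architecture (compare $Y$ to the plateau process $X$, then $X$ to $\nu_{a(t)}$ via per-plateau ergodicity, then $\nu_{a(t)}$ to $\nu^\star$ by Laplace asymptotics) matches the paper, and your treatment of the middle and last terms is essentially Theorem \ref{thm:conv_X} and Lemma \ref{lemma:W_nu_a_nu_star}. However, there is a genuine gap in how you handle the comparison $\mathcal{W}_1([Y_t],[X_t])$ (and likewise $[\bar Y_t]$ vs.\ $[X_t]$). You propose a synchronous coupling and a Gr\"onwall bound controlling the $L^2$-distance by an integral of $|a(s)-a_{N(s)}|^2$. Under \eqref{Eq:eq:V_confluence} the drift is dissipative only outside a compact set, so the synchronous coupling does \emph{not} contract; Gr\"onwall then yields a multiplicative factor $e^{C(T_{n+1}-T_n)}$ over a plateau of length $T_{n+1}-T_n\asymp n^{\beta}\to\infty$, which annihilates the smallness of $a_n-a_{n+1}\asymp (n\log^{3/2}n)^{-1}$. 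No choice of plateau widths rescues this: shortening plateaux destroys the ergodic estimate for $X$, lengthening them worsens the Gr\"onwall constant. This is precisely why the paper runs a \emph{domino} decomposition for the $Y$-vs-$X$ comparison (Section \ref{subsec:proof_Y}): the one-step difference $P^Y_{\gamma,\cdot}-P^{X,n}_{\gamma}$ is estimated only over steps of fixed length $\gamma\le\bar\gamma$ (Lemma \ref{lemma:3.4.b:Y}, Proposition \ref{prop:3.6:Y}), and the propagation of these errors over the long plateau is absorbed by the exponential contraction of the $X$-semigroup from Theorem \ref{thm:confluence} (the ``ergodic term'') together with the $t^{-1/2}$ regularization of the weak error near the end of the plateau.

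Relatedly, you have placed the domino strategy in the wrong spot: you describe it as the mechanism behind the ``plateau estimate on $\mathcal{W}_1([X_t],\nu_{a(t)})$'', but that estimate needs no domino at all --- on each plateau $X$ is exactly the homogeneous SDE with invariant measure $\nu_{a_{n+1}}$, so iterating the contraction inequality across plateaux suffices (proof of Theorem \ref{thm:conv_X}). The domino argument is indispensable exactly where you replaced it by Gr\"onwall. To repair your proof you would need to (i) restrict all strong/weak one-step error bounds to intervals of bounded length, keeping explicit track of the factors $a_{n+1}^{-2}$, $e^{C_1/a_{n+1}^2}$ and $\rho_{n+1}^{-1}=e^{C_2/a_{n+1}^2}$ they generate, and (ii) verify that the resulting per-plateau error $\lambda_{n+1}\asymp e^{(C_1+C_2)/a_{n+1}^2}(a_n-a_{n+1})$ is summable after iteration, which is what forces the threshold $A>\sqrt{(\beta+1)(2C_1+C_2)}$ rather than a generic ``$A$ large''. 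The same correction is needed for part (b), where additionally the martingale noise $\zeta$ and the discretization error enter through the one-step weak expansion (Proposition \ref{prop:3.5:Y:bar}), not through a global Gr\"onwall bound.
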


\begin{remark}
In particular, if $\argmin V = \lbrace x^\star \rbrace$ is reduced to a point, we can rewrite the conclusions of Theorem \ref{thm:main} as $\|Y_t^{x_0} - x^\star\|_1 \to 0$ and $\|\bar{Y}_t^{x_0} - x^\star\|_1 \to 0$ respectively and so on.
\end{remark}

\subsection{The degenerate case}

In this subsection we consider the case where some of the $\nabla^2 V(x_i^\star)$'s may be not definite positive but where the $x_i^\star$'s are strictly polynomial minima, i.e. is $V(x)-V(x_i^\star)$ is bounded below in a neighbourhood of $x_i^\star$ by a non-negative polynomial function null only in $x_i^\star$. This case can be treated in a similar way using the change of variable given in \cite{bras2021}.

First, let us restate the results from \cite[Theorem 4]{bras2021}. To simplify, let us assume that $\argmin(V)$ is reduced to a point.

\begin{theorem}
\label{thm:athreya}
Assume that $V$ is $C^{2p}$ with $p \ge 2$, is coercive, that $\argmin(V) = \lbrace x^\star \rbrace$, that $e^{-AV} \in L^1(\mathbb{R}^d)$ for some $A>0$ and that $x^\star$ is a strictly polynomial minimum of order $2p$ i.e. $p$ is the smallest integer such that
$$ \exists r >0, \ \forall h \in \textbf{B}(x^\star,r)\setminus \lbrace 0 \rbrace, \ \sum_{k=0}^{2p} \frac{1}{k!} \nabla^k V(x^\star) \cdot h^{\otimes k} > 0 .$$
Assume also the technical hypothesis \cite[(8)]{bras2021} if $p \ge 5$.
Then there exist $B \in \mathcal{O}_d(\mathbb{R})$, $\alpha_1, \ \ldots, \ \alpha_d \in \lbrace 1/2, \ldots, 1/(2p) \rbrace$ and a polynomial function $g : \mathbb{R}^d \to \mathbb{R}$ which is not constant in any of its variables such that
$$ \forall h \in \mathbb{R}^d, \ \frac{1}{s}[ V(x^\star + B \cdot (s^{\alpha_1}h_1,\ldots, s^{\alpha_d}h_d)) - V(x^\star)] \underset{s \to 0}{\longrightarrow} g(h) .$$
Moreover assume that $g$ is coercive. Then if $Z_s \sim \nu_{\sqrt{2s}}$,
$$ \left( \frac{(B^{-1}\cdot (Z_s-x^\star))_1}{s^{\alpha_1}}, \cdots, \frac{(B^{-1}\cdot (Z_s-x^\star))_d}{s^{\alpha_d}} \right) \overset{\mathscr{L}}{\longrightarrow} Z \quad \text{ as } s \to 0,$$
where $Z$ has density proportional to $\exp(-g)$.
\end{theorem}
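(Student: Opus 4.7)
The plan is to prove this convergence in distribution by establishing pointwise convergence of the rescaled densities and then upgrading it through a domination argument. I would split the proof into three stages: (i) constructing the orthogonal $B$ and the exponents $\alpha_i$ from a weighted normal-form analysis of $V$ at $x^\star$; (ii) proving pointwise convergence of the rescaled density via the change of variable $x = x^\star + B\cdot(s^{\alpha_1}h_1,\ldots,s^{\alpha_d}h_d)$; (iii) dominating the rescaled density by an integrable function using the coercivity of $g$ and of $V$.

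For stage (i), since $\nabla V(x^\star)=0$, Taylor expansion to order $2p$ gives $V(x^\star+h)-V^\star = P(h)+o(|h|^{2p})$, where $P$ is a polynomial of order at least $2$ at the origin and, by the assumption of strict polynomial minimum, strictly positive on a punctured ball around $0$. A first orthogonal change of basis diagonalizes $\nabla^2V(x^\star)$: directions with a strictly positive eigenvalue contribute weight $\alpha_i=1/2$. On the kernel of the Hessian one looks at higher-order terms; further rotations together with the assignment of weights $\alpha_i\in\{1/(2k):1\le k\le p\}$ extract the leading weighted-homogeneous polynomial $g$. The construction is recursive and terminates after finitely many steps, and the technical hypothesis cited from \cite{bras2021} is precisely what guarantees, for $p\ge 5$, that this unfolding produces a $g$ which is non-constant in each of its variables.

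For stage (ii), set $\Phi_s(h) = x^\star+B\cdot(s^{\alpha_1}h_1,\ldots,s^{\alpha_d}h_d)$ and $H_s = \bigl(s^{-\alpha_i}(B^{-1}(Z_s-x^\star))_i\bigr)_{1\le i\le d}$. Because $B$ is orthogonal, the change of variable $x=\Phi_s(h)$ has Jacobian $s^{\alpha_1+\cdots+\alpha_d}$, so $H_s$ admits the density
$$p_s(h) \;=\; s^{\alpha_1+\cdots+\alpha_d}\,\mathcal{Z}_{\sqrt{2s}}\,\exp\!\Bigl(-\tfrac{V(\Phi_s(h))-V^\star}{s}\Bigr).$$
The hypothesis $s^{-1}(V(\Phi_s(h))-V^\star)\to g(h)$ then yields pointwise convergence of the unnormalized density towards $\exp(-g(h))$.

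For stage (iii), I would upgrade pointwise convergence to convergence in law by producing an integrable dominating function. On a large compact set, uniform Taylor control gives such a bound; outside, the coercivity of $V$ together with the weighted structure constructed in stage (i) yields, for $s$ small enough, an inequality of the form $s^{-1}(V(\Phi_s(h))-V^\star)\ge c\, g(h) - C$, which is integrable since $g$ is coercive. Scheffé's lemma then gives $L^1$-convergence of the densities (after renormalization), implying convergence in law and, as a by-product, convergence of $\mathcal{Z}_{\sqrt{2s}}\, s^{\alpha_1+\cdots+\alpha_d}$ to $\bigl(\int e^{-g}\bigr)^{-1}$. The main obstacle is clearly stage (i): the algebraic-combinatorial normal-form construction carries all the content of the theorem, while stages (ii) and (iii) amount to a change-of-variables computation followed by a standard dominated-convergence argument.
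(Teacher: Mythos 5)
First, a remark on the comparison itself: the paper does not prove Theorem \ref{thm:athreya} — it is imported verbatim from \cite[Theorem 4]{bras2021} — so your outline can only be measured against the strategy of that source. Your three-stage plan (weighted normal form at $x^\star$, change of variables $x=\Phi_s(h)$ giving the density $s^{\alpha_1+\cdots+\alpha_d}\mathcal{Z}_{\sqrt{2s}}e^{-(V(\Phi_s(h))-V^\star)/s}$, then a domination/Scheff\'e step) is indeed the route taken there, and your identification of where the technical hypothesis \cite[(8)]{bras2021} enters is accurate. But as written the proposal has two gaps, one of substance and one that is an actual error.

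The substantive gap is stage (i): you describe the recursive diagonalization and weight assignment but do not carry it out, and you say yourself that this step ``carries all the content of the theorem.'' The nontrivial points — how cross terms between directions of different weights are absorbed, why the limit $s^{-1}(V(\Phi_s(h))-V^\star)\to g(h)$ exists at all (i.e.\ why no monomial of weighted degree $<1$ survives with a nonzero coefficient), and why $g$ is non-constant in each variable — are precisely what the construction must deliver, and none of it is argued. The error is in stage (iii): the claimed global bound $s^{-1}(V(\Phi_s(h))-V^\star)\ge c\,g(h)-C$ for $s$ small and $h$ outside a compact set is false in general. Once $\Phi_s(h)$ leaves the ball $\textbf{B}(x^\star,r)$ on which the Taylor normal form is valid, $V$ grows at most quadratically in $x$ (under \eqref{Eq:eq:V_assumptions}), so $s^{-1}(V(\Phi_s(h))-V^\star)\le Cs^{-1}(1+s^{2\alpha_{\min}}|h|^2)$, whereas $g(h)$ grows like $|h|^{2p}$ along a weight-$1/(2p)$ direction; taking $|h|\sim s^{-1}$ already violates the inequality. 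The outer region cannot be handled by pointwise comparison with $g$: it must be disposed of by a Laplace-type tail estimate in the original variable, namely
$$\mathcal{Z}_{\sqrt{2s}}\int_{\textbf{B}(x^\star,r)^c}e^{-(V(x)-V^\star)/s}\,dx\le C\,s^{-(\alpha_1+\cdots+\alpha_d)}e^{-\varepsilon(1/s-A)}\int_{\mathbb{R}^d}e^{-A(V(x)-V^\star)}\,dx\longrightarrow 0,$$
which is where the hypothesis $e^{-AV}\in L^1(\mathbb{R}^d)$ is actually used (this is the role of \cite[Proposition 1]{bras2021}, invoked for the analogous split in the Supplementary Material's proof of Proposition \ref{prop:W_nu}). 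The weighted lower bound via coercivity of $g$ is only needed, and only valid, on the inner region $\{h:\Phi_s(h)\in\textbf{B}(x^\star,r)\}$, where it yields dominated convergence of the normalization integral; pointwise convergence of the normalized densities plus Scheff\'e then finishes the argument as you indicate.
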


\begin{theorem}
\label{thm:non_definite}
Let us make the same assumptions as in Theorem \ref{thm:athreya} and assume that $V^\star > 0$. Assume furthermore \eqref{Eq:eq:V_assumptions}, \eqref{Eq:eq:sigma_assumptions}, \eqref{eq:ellipticity} and \eqref{Eq:eq:V_confluence}. Assume furthermore \eqref{Eq:eq:gamma_assumptions} and \eqref{Eq:eq:gamma_assumptions_2}, that $V$ is $\mathcal{C}^3$ with $\|\nabla^3 V\| \le CV^{1/2}$ and that $\sigma$ is $\mathcal{C}^3$ with $\|\nabla^3(\sigma \sigma^\top) \| \le CV^{1/2}$.
Let us denote $\alpha_{\min} := \min(\alpha_1,\ldots,\alpha_d)$. Then for every $\alpha \in (0,1)$ we have
$$ \mathcal{W}_1([Y_t^{x_0}],\nu_{a(t)}) \le C \max(1+|x_0|,V(x_0))t^{-\alpha} \quad \textup{ and } \quad \mathcal{W}_1([\bar{Y}_t^{x_0}],\nu_{a(t)}) \le C \max(1+|x_0|,V^2(x_0))t^{-\alpha} $$
and
$$ \mathcal{W}_1([Y_t^{x_0}],\nu^\star) \le C \max(1+|x_0|,V(x_0))a(t)^{2\alpha_{\min}} \textup{ and } \mathcal{W}_1([\bar{Y}_t^{x_0}],\nu^\star) \le C \max(1+|x_0|,V^2(x_0))a(t)^{2\alpha_{\min}}. $$
\end{theorem}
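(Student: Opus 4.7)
The plan is to reduce the statement to Theorem \ref{thm:main} combined with a refined rate, valid in the degenerate case, at which $\nu_{a(t)}$ concentrates on $\nu^\star$. The key observation is that the positive-definiteness hypothesis in \eqref{Eq:eq:min_V} enters the proof of Theorem \ref{thm:main} only through the rate $\mathcal{W}_1(\nu_{a(t)},\nu^\star) = O(a(t))$; every other ingredient of that proof, namely the plateau domino strategy, the ergodicity of the frozen SDE on each plateau, and the refined weak and strong Euler--Maruyama error estimates, uses only \eqref{Eq:eq:V_assumptions}, \eqref{Eq:eq:sigma_assumptions}, \eqref{eq:ellipticity} and \eqref{Eq:eq:V_confluence}, all of which are still in force here.

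First I would verify that the intermediate bounds
$$\mathcal{W}_1([Y_t^{x_0}],\nu_{a(t)}) \le C\max(1+|x_0|,V(x_0))\, t^{-\alpha} \quad \text{and} \quad \mathcal{W}_1([\bar{Y}_t^{x_0}],\nu_{a(t)}) \le C\max(1+|x_0|,V^2(x_0))\, t^{-\alpha}$$
from the proof of Theorem \ref{thm:main} hold verbatim in the present setting. This amounts to inspecting which lemmas feed into these inequalities and checking that none uses \eqref{Eq:eq:min_V} beyond the coercivity and positivity of $V$; the strictly polynomial minimum assumption preserves both. This settles the first pair of estimates of the theorem without any further work on the local behaviour of $V$ at $x^\star$.

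Second, the main new task is to prove that $\mathcal{W}_1(\nu_a, \nu^\star) \le C a^{2\alpha_{\min}}$ as $a\to 0$. For this I would apply Theorem \ref{thm:athreya} with $s = a^2/2$: if $Z_s \sim \nu_a$, then the componentwise rescaling of $B^{-1}\cdot(Z_s - x^\star)$ by $s^{-\alpha_i}$ converges in law to a random vector with density proportional to $\exp(-g)$. Since $\alpha_i \ge \alpha_{\min}$ for every $i$, this implies $|Z_s - x^\star| = O_{\mathbb{P}}(s^{\alpha_{\min}})$, which upgrades to $\mathbb{E}|Z_s - x^\star| \le C s^{\alpha_{\min}}$ once uniform integrability is established. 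The latter would follow from a Laplace-type moment bound on $\nu_a$ using \eqref{Eq:eq:V_assumptions} and the coercivity assumption on $g$. Plugging $a = a(t)$ yields $\mathcal{W}_1(\nu_{a(t)},\nu^\star) \le C a(t)^{2\alpha_{\min}}$.

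Finally I would combine the two steps via the triangle inequality
$$\mathcal{W}_1([Y_t^{x_0}],\nu^\star) \le \mathcal{W}_1([Y_t^{x_0}],\nu_{a(t)}) + \mathcal{W}_1(\nu_{a(t)},\nu^\star),$$
and observe that $t^{-\alpha}$ decays polynomially in $t$ whereas $a(t)^{2\alpha_{\min}} \asymp \log^{-\alpha_{\min}}(t+e)$ decays only logarithmically, so that the second term dominates and absorbs the first for large $t$ and, after adjusting constants, for every $t>0$. The same argument applies to $\bar{Y}_t^{x_0}$. The main obstacle will be upgrading the weak convergence of Theorem \ref{thm:athreya} to an $L^1$-Wasserstein estimate with the expected sharp exponent: the non-isotropic scaling $(\alpha_1, \ldots, \alpha_d)$ makes uniform integrability delicate in directions where $\alpha_i$ is close to $1/(2p)$, and careful Laplace-type tail bounds along the \textquotedblleft flattest\textquotedblright\ direction will be required to avoid losing logarithmic factors.
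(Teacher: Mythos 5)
Your reduction has a genuine gap in the first step. You claim the bounds $\mathcal{W}_1([Y_t^{x_0}],\nu_{a(t)})\le Ct^{-\alpha}$ carry over ``verbatim'' because non-degeneracy of $\nabla^2V$ at the minima would only enter through the final rate $\mathcal{W}_1(\nu_{a},\nu^\star)=O(a)$. This is not the case: it also enters through Proposition \ref{prop:W_nu}, whose proof relies on the isotropic rescaling $x\mapsto x_i^\star+ax$ and the Gaussian limit $e^{-x^\top\nabla^2V(x_i^\star)x}$, hence on positive-definiteness. The consecutive-plateau bound $\mathcal{W}_1(\nu_{a_n},\nu_{a_{n+1}})\le C(n\log^{3/2}n)^{-1}$ is fed directly into the recursion of Theorem \ref{thm:conv_X} and, crucially, into \eqref{eq:proof:X_t_nu}, where the distance $\mathcal{W}_1([X_{T_n+t}],\nu_{a_{n+1}})$ inside a plateau is controlled by $e^{C_1a_{n+1}^{-2}}\mathcal{W}_1(\nu_{a_n},\nu_{a_{n+1}})$ with no contracting factor (since $t$ may be small). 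If you only have the crude bound $\mathcal{W}_1(\nu_{a_n},\nu_{a_{n+1}})\le \mathcal{W}_1(\nu_{a_n},\nu^\star)+\mathcal{W}_1(\nu^\star,\nu_{a_{n+1}})\le Ca_n^{2\alpha_{\min}}\asymp\log^{-\alpha_{\min}}(n)$ coming from your step 2, this term does not even tend to $0$ after multiplication by $e^{C_1a_{n+1}^{-2}}\asymp n^{(\beta+1)C_1/A^2}$, so the polynomial rate $t^{-\alpha}$ to $\nu_{a(t)}$ cannot be recovered this way. What is actually needed --- and is the main content of the paper's proof --- is the degenerate analogue of Proposition \ref{prop:W_nu}, obtained by replacing the change of variables $ax$ by the anisotropic one $x\mapsto x^\star+B\cdot(a^{2\alpha_1}x_1,\dots,a^{2\alpha_d}x_d)$, which yields $\mathcal{Z}_a^{-1}\sim a^{2(\alpha_1+\cdots+\alpha_d)}\int e^{-2g}$ and then $\mathcal{W}_1(\nu_{a_n},\nu_{a_{n+1}})\le C(n\log^{1+\alpha_{\min}}(n))^{-1}$ --- still a convergent Bertrand series because $\alpha_{\min}>0$, and of the right order $o(n^{-1})$ up to logs.

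On your second step: deriving $\mathcal{W}_1(\nu_a,\nu^\star)\le Ca^{2\alpha_{\min}}$ directly from Theorem \ref{thm:athreya} plus a uniform-integrability upgrade is a legitimate alternative to the paper's route (the paper instead sums the tail of the refined consecutive-differences series, exactly as in Lemma \ref{lemma:W_nu_a_nu_star}); the exponent bookkeeping $s^{\alpha_{\min}}=(a^2/2)^{\alpha_{\min}}\asymp a^{2\alpha_{\min}}$ is correct, and the required domination can be obtained from the comparison potential $\widetilde V$ and the integrability of $e^{-\kappa g}$ for all $\kappa>0$. But this alternative only replaces the last, easiest piece; it does not repair the missing consecutive-differences estimate, which is what the first two displayed inequalities of the theorem actually depend on.
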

The proof is given in the Supplementary Material.

\section{Application to optimization problems}
\label{sec:optimization}

\subsection{Potential function associated to a Neural Regression Problem}
\label{sec:neural_networks}

The setting described in Section \ref{sec:main-results} can first be applied to convex optimization problems where the potential function $V$ has a quadratic growth as $|x| \rightarrow \infty$. Classical examples are least-squares regression and logistic regression with quadratic regularization, that is:
$$ \min_{x \in \mathbb{R}^d} \frac{1}{M} \sum_{i=1}^M \log(1+e^{-v_i \langle u_i, x \rangle}) + \frac{\lambda}{2} |x|^2 ,$$
where $v_i \in \lbrace -1, +1 \rbrace$ and $u_i \in \mathbb{R}^d$ are the data samples associated with a binary classification problem and where $\lambda >0$ is the regularization parameter.


\medskip

We now consider a scalar regression problem with a fully connected neural network with quadratic regularization. Let $\varphi : \mathbb{R} \rightarrow \mathbb{R}$ be the sigmoid function. To simplify the proofs, we may consider instead a smooth function approximating the sigmoid function such that $\varphi'$ has compact support. Let $K \in \mathbb{N}$ be the number of layers and for $k=1$, $\ldots$, $K$, let $d_k \in \mathbb{N}$ be the size of the $k^{\text{th}}$ layer with $d_K=1$. For $u \in \mathbb{R}^{d_{k-1}}$ and for $\theta \in \mathcal{M}_{d_k,d_{k-1}}(\mathbb{R})$, we define $\varphi_{\theta}(u) := [\varphi([\theta \cdot u]_i)]_{1 \le i \le d_k}$. The output of the neural network is
\begin{align*}
& \psi : \mathbb{R}^{d_1,d_0} \times \cdots \times \mathbb{R}^{d_K,d_{K-1}} \times \mathbb{R}^{d_0} \to \mathbb{R} \\
& \psi (\theta_1,\ldots,\theta_K,u) = \psi(\theta,u) = \varphi_{\theta_K} \circ \ldots \circ \varphi_{\theta_1}(u).
\end{align*}
Let $u_i \in \mathbb{R}^{d_0}$ and $v_i \in \mathbb{R}$ be the data samples for $1 \le i \le M$. The objective is
$$ \underset{\theta_1, \ldots, \theta_K}{\text{minimize}} \quad V(\theta) := \frac{1}{2M} \sum_{i=1}^M (\psi(\theta_1, \ldots, \theta_K,u_i) -v_i)^2 + \frac{\lambda}{2}|\theta|^2 ,$$
where $\theta = (\theta_1, \ldots, \theta_K)$ and where $\lambda > 0$.

\begin{proposition}
Consider a neural network with a single layer : $\psi(\theta,u) = \varphi(\langle \theta, u \rangle)$. Assume that the data $u$ and $v$ are bounded and that $u$ admits a continuous density. Then $V$ satisfies \eqref{Eq:eq:V_assumptions} and for some $R_0, \ \alpha_0 >0$,
\begin{equation}
\label{eq:NN_condition}
\forall x, \ y \in \textbf{B}(0,R_0)^c, \ \langle \nabla V(x) - \nabla V(y), x-y \rangle \ge \alpha_0|x-y|^2
\end{equation}
\end{proposition}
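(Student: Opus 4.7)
The plan is to exploit the decomposition
$$V(\theta)=\tfrac{\lambda}{2}|\theta|^2+L(\theta),\qquad L(\theta):=\tfrac{1}{2M}\sum_{i=1}^M (\varphi(\langle\theta,u_i\rangle)-v_i)^2,$$
in which the quadratic regularizer furnishes a strongly convex ``base'' while $L$ plays the role of a perturbation that is small, in a suitable sense, outside a compact set. The driving observation is that since the sigmoid has been replaced by a smooth approximation $\varphi$ whose derivative $\varphi'$ (and hence $\varphi''$) has compact support and since $(u_i,v_i)$ are bounded, $L$ is nonnegative and bounded, and both $\nabla L(\theta)=\tfrac{1}{M}\sum_i(\varphi(\langle\theta,u_i\rangle)-v_i)\varphi'(\langle\theta,u_i\rangle)u_i$ and $\nabla^2 L$ are globally bounded in $\theta$; set $M_L:=\|\nabla L\|_\infty<\infty$.

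From this, \eqref{Eq:eq:V_assumptions} falls out almost immediately: $V\ge\tfrac{\lambda}{2}|\theta|^2$ is coercive; from $\nabla V=\lambda\theta+\nabla L$ one obtains $|\nabla V|^2\le 2M_L^2+2\lambda^2|\theta|^2\le C V$ using $V\ge V^\star>0$ and $V\ge\tfrac{\lambda}{2}|\theta|^2$; and $\nabla^2 V=\lambda I_d+\nabla^2 L$ is uniformly bounded.

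The heart of the proof is \eqref{eq:NN_condition}, which I would establish by splitting according to the size of $|x-y|$. Writing
$$\langle\nabla V(x)-\nabla V(y),x-y\rangle=\lambda|x-y|^2+\langle\nabla L(x)-\nabla L(y),x-y\rangle,$$
boundedness of $\nabla L$ alone yields the lower bound $\lambda|x-y|^2-2M_L|x-y|$, which already exceeds $\tfrac{\lambda}{2}|x-y|^2$ as soon as $|x-y|\ge 4M_L/\lambda$. In the complementary regime $x,y\in\textbf{B}(0,R_0)^c$ with $|x-y|<4M_L/\lambda$, the segment $\xi_s=y+s(x-y)$, $s\in[0,1]$, lies entirely in $\textbf{B}(0,R_0-4M_L/\lambda)^c$, so by the integral representation $\langle\nabla V(x)-\nabla V(y),x-y\rangle=\int_0^1(x-y)^\top\nabla^2 V(\xi_s)(x-y)\,ds$ it suffices to show $\nabla^2 V(\xi)\succeq\tfrac{\lambda}{2}I_d$, i.e. $\|\nabla^2 L(\xi)\|\le\tfrac{\lambda}{2}$, for every $\xi$ with $|\xi|$ above some threshold $R_1$.

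The main obstacle is precisely this uniform Hessian bound. Because $\varphi',\varphi''$ vanish outside some interval $[-R_\varphi,R_\varphi]$, only the indices $i$ with $|\langle\xi,u_i\rangle|\le R_\varphi$ contribute to
$$\nabla^2 L(\xi)=\tfrac{1}{M}\sum_{i=1}^M\bigl[(\varphi')^2(\langle\xi,u_i\rangle)+(\varphi(\langle\xi,u_i\rangle)-v_i)\varphi''(\langle\xi,u_i\rangle)\bigr]u_iu_i^\top,$$
which amounts to counting data points in the slab of thickness $2R_\varphi/|\xi|$ around the hyperplane $\xi^\perp$. The continuous density and boundedness of $u$, together with a covering argument over the direction $\xi/|\xi|\in S^{d-1}$, yield almost surely in the data sample the estimate $N(\xi,R_\varphi):=\#\{i\le M:|\langle\xi,u_i\rangle|\le R_\varphi\}=o(M)$ uniformly in $|\xi|\ge R_1$ as $R_1\to\infty$, whence $\|\nabla^2 L(\xi)\|\le\lambda/2$. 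Taking $R_0>R_1+4M_L/\lambda$ patches the two regimes and gives \eqref{eq:NN_condition} with $\alpha_0=\lambda/2$.
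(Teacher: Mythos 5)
Your overall strategy coincides with the paper's: the quadratic regularizer supplies the strong convexity, and one must show that the Hessian of the data-fitting term $L$ becomes small (below $\lambda/2$ in operator norm) for $|\theta|$ large, because $\varphi'$ and $\varphi''$ are compactly supported so only data in the thin slab $\{|\langle\theta,u\rangle|\le R_\varphi\}$ contribute. Your handling of \eqref{Eq:eq:V_assumptions} is fine, and your two-regime treatment of \eqref{eq:NN_condition} (bounded $\nabla L$ when $|x-y|$ is large; the Hessian along the segment when $|x-y|$ is small, noting the segment then stays outside a slightly smaller ball) is actually more careful than the paper, which passes from positivity of $\nabla^2 V$ outside a compact set to the monotonicity inequality without addressing that the segment joining $x$ and $y$ may cross the ball.

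The gap is in the key Hessian estimate. You work with the empirical average $\tfrac1M\sum_{i=1}^M$ over a \emph{fixed finite} sample and claim that the slab count $N(\xi,R_\varphi)$ is $o(M)$ uniformly over directions as $R_1\to\infty$, ``almost surely in the data sample.'' For fixed $M$ and $d\ge 2$ this is false: for any nonzero data point $u_i$ there are directions $e\in S^{d-1}$ with $\langle e,u_i\rangle=0$, so for every threshold $R_1$ one finds $\xi$ with $|\xi|\ge R_1$ and $N(\xi,R_\varphi)\ge 1$; the best uniform bound is then $\|\nabla^2 L(\xi)\|\le C/M$, which does not tend to $0$ as $|\xi|\to\infty$ and need not be $\le\lambda/2$. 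The hypothesis that $u$ ``admits a continuous density'' only makes sense for — and the paper's proof is written for — the \emph{population} risk $L(\theta)=\tfrac12\int(\varphi(\langle\theta,u\rangle)-v)^2P(du,dv)$. There the argument is immediate: the continuous density is bounded on the compact support of $u$, so the slab $\{u\in\textbf{B}(0,1):|\langle\theta,u\rangle|<R_\varphi\}$ has Lebesgue (hence $P$-) measure at most $C/|\theta|$, giving $\|\nabla^2 L(\theta)\|\le C/|\theta|\to 0$. Replacing your counting argument by this measure estimate repairs the proof; as written, the covering/counting step does not hold.
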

\begin{proof}
Note that $\varphi$, $\varphi'$ and $\varphi''$ are bounded. The function $\psi$ is bounded so
$$ 2V(\theta) = \int (\varphi(\langle \theta, u \rangle) - v)^2 P(du,dv) + \lambda|\theta|^2 \sim \lambda|\theta|^2 \quad \text{ as } |\theta| \rightarrow \infty ,$$
so $V$ is coercive. Moreover, we have
\begin{align*}
\nabla V = \int u \varphi'(\langle \theta, u \rangle) (\varphi(\langle \theta, u \rangle) - v) P(du,dv) + \lambda \theta
\end{align*}
so $\nabla V(\theta) \sim \lambda\theta$ as $|\theta| \rightarrow \infty$ and $|\nabla V|^2 \le CV$.
Then, let us assume that the support of $\varphi$ is included in $[-1,1]$, that $u$ has its values in $\textbf{B}(0,1)$ and $v$ in $[-1,1]$. Then the set $ \lbrace u \in \textbf{B}(0,1), \ |\langle \theta,u \rangle| < 1 \rangle $ has Lebesgue measure no larger than $C/|\theta|$ so
$$ \left\|\nabla^2 \int (\varphi(\langle \theta, u \rangle) - v)^2 P(du,dv) \right\| \le C/|\theta|,$$
so outside the compact set $\lbrace |\theta| \le 2C/\lambda \rbrace$, we have $\| \nabla ^2 V \| \ge \lambda/2$ which guarantees \eqref{eq:NN_condition}.
\end{proof}

However, we cannot directly extend this proposition to multi-layers neural networks. Nevertheless, if we consider that the training stops if a parameter becomes too large and if we replace $\psi(\theta,u)$ by $\psi(\phi(\theta),u)$ where $\phi : \mathbb{R} \to \mathbb{R}$ is a smooth approximation of $x \mapsto \min(x,R)\mathds{1}_{x \ge 0} + \max(x,-R)\mathds{1}_{x<0}$ where $R>0$ is large and where $\phi$ is applied in order to avoid over-fitting coordinate by coordinate then the resulting potential $V$ with quadratic regularization satisfies \eqref{Eq:eq:V_assumptions} and \eqref{eq:NN_condition}.

\subsection{Practitioner's corner: choices for $\sigma$}
\label{subsec:practitioner}

In this section we briefly present general choices for the non-constant matrix $\sigma$ that are often used in the Stochastic Optimization and Machine Learning literature.

\cite{welling2011} introduced the Stochastic Gradient Langevin Dynamics (SGLD) with constant preconditioner matrix $\sigma$.
\cite{dauphin2015} adapted the well-known Newton method, which consists in considering $\sigma \sigma^\top = (\nabla^2 V)^{-1}$, to SGLD. Since the size of the Hessian matrix may be too large in practice, because inverting it is computationally costly and because the Hessian matrix may not be positive in every point, it is suggested to consider instead $|\text{diag}((\nabla^2V))^2|^{-1/2}$.
However, computing high-order derivatives may be cumbersome; \cite{simsekli2016} adapts the quasi-Newton method \cite{nocedal2006} to approximate the Hessian matrix to SGLD, yielding the Stochastic Quasi-Newton Langevin algorithm.

\cite{duchi2011} and \cite{li2015} give algorithms where the choice for $\sigma$ is $\sigma \simeq \text{diag}((\lambda + |\nabla V|)^{-1})$, where $\lambda >0$ guarantees numerical stability.
The idea of using geometry has been explored in \cite{patterson2013}, where $\sigma^{-2}$ defines the local curvature of a Riemannian  manifold, giving the Stochastic Gradient Riemaniann Langevin Dynamics algorithm where $\sigma$ is equal to $\mathcal{I}_x^{-1/2}$ where $\mathcal{I}_x$ is the Fischer information matrix, or to some other choices (see \cite[Table 1]{patterson2013}) as $\mathcal{I}_x$ may be intractable.
\cite{ma2015} extends the previous algorithm to Hamiltonian Monte Carlo methods, where a momentum variable is added in order to take into account the "inertia" of the trajectory, yielding the Stochastic Gradient Riemannian Hamiltonian Monte Carlo method.

Allowing the matrix $\sigma$ to depend on the position yields a faster convergence; we refer to the previous references where the simulations prove that these new methods greatly improve classical stochastic gradients algorithms. In particular, we refer to the simulations \cite[Figure 2]{simsekli2016}, \cite[Figure 2]{patterson2013} and \cite[Figure 3]{ma2015} where the different methods based on multiplicative noise are compared.

\section{Langevin equation with constant time coefficient}
\label{sec:langevin}

In this section, we consider the following $\mathbb{R}^d$-valued homogeneous SDE:
\begin{align}
\label{eq:X_SDE}
& X_0^x = x \in \mathbb{R}^d, \quad dX_t^x = b_a(X_t^x)dt + a\sigma(X_t^x)dW_t,
\end{align}
with $a \in(0,A]$ and where $b_a$ is defined in \eqref{eq:def_b}. The drift is specified in such a way that the Gibbs measure $\nu_{a}$ defined in \eqref{eq:def_nu} is the unique invariant distribution of $(X_t^x)$ (see \cite[Proposition 2.5]{pages2020}).

\subsection{Exponential contraction property}

We now prove contraction properties of the SDE \eqref{eq:X_SDE} under the uniform convex setting on the whole $\mathbb{R}^d$ or outside a compact set \eqref{Eq:eq:V_confluence}.
If the uniform dissipative assumption holds on $\mathbb{R}^d$ then we have the following contraction property.
\begin{proposition}
Let $Z$ be the solution of
\begin{align*}
& Z_0^x = x \in \mathbb{R}^d, \quad dZ_t^x = b^Z(Z_t^x)dt + \sigma^Z(Z_t^x) dW_t,
\end{align*}
where the coefficients $b^Z$ and $\sigma^Z$ are Lipschitz continuous.
Assume the uniform convexity i.e. there exists $\alpha >0$ such that
\begin{equation}
\label{eq:uniform_dissipative}
\forall x, y \in \mathbb{R}^d, \langle b^Z(x)-b^Z(y), \ x-y \rangle + \frac{1}{2}|| \sigma^Z(x)-\sigma^Z(y) ||^2 \le - \alpha |x-y|^2 .
\end{equation}
Then:
$$ \forall x,y \in \mathbb{R}^d, \ \mathcal{W}_1\left(\left[Z^x_t\right], \left[Z^y_t\right]\right) \le C|x-y|e^{-\alpha t} .$$
\end{proposition}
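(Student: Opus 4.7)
The plan is to implement the classical synchronous coupling argument. Drive both processes $Z^x$ and $Z^y$ with the \emph{same} Brownian motion $W$ and set $\Delta_t := Z^x_t - Z^y_t$, so that $\Delta_0 = x-y$ and
$$ d\Delta_t = (b^Z(Z^x_t) - b^Z(Z^y_t))\, dt + (\sigma^Z(Z^x_t) - \sigma^Z(Z^y_t))\, dW_t. $$
The joint law $(Z^x_t, Z^y_t)$ is then a valid coupling of $[Z^x_t]$ and $[Z^y_t]$, so any bound on $\mathbb{E}|\Delta_t|$ immediately bounds $\mathcal{W}_1([Z^x_t],[Z^y_t])$.

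First, I would apply It\^o's formula to $|\Delta_t|^2$:
$$ d|\Delta_t|^2 = \Bigl( 2\langle \Delta_t, b^Z(Z^x_t) - b^Z(Z^y_t)\rangle + \|\sigma^Z(Z^x_t) - \sigma^Z(Z^y_t)\|^2 \Bigr) dt + dM_t, $$
where $M_t = 2\int_0^t \langle \Delta_s, (\sigma^Z(Z^x_s)-\sigma^Z(Z^y_s))\,dW_s\rangle$ is a local martingale (and the $\|\cdot\|^2$ term stands for the squared Hilbert--Schmidt norm coming from the quadratic variation; the factor $\tfrac12$ in \eqref{eq:uniform_dissipative} is precisely calibrated for this). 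By the uniform dissipativity \eqref{eq:uniform_dissipative} applied pointwise at $(Z^x_t, Z^y_t)$, the deterministic integrand is bounded above by $-2\alpha |\Delta_t|^2$.

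Next, I would take expectations. Lipschitz continuity of $b^Z$ and $\sigma^Z$ yields the standard polynomial moment bounds on $Z^x$ and $Z^y$, so a localization along stopping times $\tau_R = \inf\{t : |Z^x_t| \vee |Z^y_t| \ge R\}$ kills $M_{t \wedge \tau_R}$ in expectation; then letting $R\to\infty$ and using these moment bounds to pass to the limit yields
$$ \frac{d}{dt}\, \mathbb{E}|\Delta_t|^2 \;\le\; -2\alpha\, \mathbb{E}|\Delta_t|^2. $$
Gr\"onwall's lemma then gives $\mathbb{E}|\Delta_t|^2 \le |x-y|^2 e^{-2\alpha t}$, and by Jensen (Cauchy--Schwarz)
$$ \mathcal{W}_1([Z^x_t],[Z^y_t]) \;\le\; \mathbb{E}|\Delta_t| \;\le\; \bigl(\mathbb{E}|\Delta_t|^2\bigr)^{1/2} \;\le\; |x-y|\,e^{-\alpha t}, $$
so that $C=1$ actually suffices. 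The one step requiring care is the passage from the local-martingale identity to the expectation inequality, but under Lipschitz coefficients this is entirely routine; no other obstacle arises, which is why the proposition is really just a clean packaging of the standard synchronous-coupling estimate and will serve as a template for the harder, non-uniformly convex case \eqref{Eq:eq:V_confluence} treated later in the section.
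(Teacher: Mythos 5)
Your synchronous coupling, the Itô formula for $|\Delta_t|^2$, the dissipativity bound and Grönwall, followed by Cauchy--Schwarz to pass from $L^2$ to $\mathcal{W}_1$, is exactly the argument the paper uses (phrased there as the supermartingale property of $e^{2\alpha t}|Z^x_t-Z^y_t|^2$). Your proof is correct and essentially identical, including the observation that $C=1$ suffices.
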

\begin{proof}
By the It\={o} lemma, $t \mapsto e^{2\alpha t}\left|Z^x_t - Z^y_t\right|^2$ is a super-martingale, so
$$ \mathbb{E} \left|Z^x_t - Z^y_t\right|^2 \le e^{-2\alpha t}|x-y|^2 ,$$
which yields the desired result.
\end{proof}

This proposition can be applied to $X$ under the assumption
$$ \forall x, y \in \mathbb{R}^d, \langle b_a(x)-b_a(y), \ x-y \rangle + \frac{a^2}{2}|| \sigma(x)-\sigma(y) ||^2 \le - \alpha |x-y|^2, $$
which may be hard to check because of the dependence in $a$.
In \cite[Corollary 2.4]{pages2020} is proved that this contraction property is still true under the uniform convexity outside a compact set \eqref{Eq:eq:V_confluence}. We make this statement more precise by expliciting the dependence in $a$.

\begin{theorem}
\label{thm:confluence}
Under the assumption \eqref{Eq:eq:V_confluence},
\begin{enumerate}[label=(\alph*)]
\item For every $x$, $y \in \mathbb{R}^d$,
\begin{align}
\mathcal{W}_1\left(\left[X^x_t\right], \left[X^y_t\right]\right) \le C e^{C_1/a^2} |x-y|e^{-\rho_a t}, \quad \rho_a := e^{-C_2/a^2}
\owntag[eq:W_confluence]{$\mathcal{P}_{cf}$}
\end{align}
where the constants $C$, $C_1$, $C_2$ do not depend on $a$.
\item For every $x \in \mathbb{R}^d$,
\begin{align}
\label{eq:confluence_nu_a}
& \mathcal{W}_1\left(\left[X^x_t\right],\nu_a\right) \le C e^{C_1/a^2} e^{-\rho_a t}\mathcal{W}_1(\delta_x, \nu_a).
\end{align}
\end{enumerate}
\end{theorem}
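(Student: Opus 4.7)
The plan is to adapt the coupling/concave-Lyapunov-function proof of \cite[Corollary 2.4]{pages2020} but to keep explicit track of how each constant depends on the ellipticity level $a$ of the noise. Since the confluence assumption \eqref{Eq:eq:V_confluence} only gives dissipativity outside the compact set $\textbf{B}(0,R_0)$, a direct synchronous-coupling estimate with the Euclidean norm will not yield contraction; instead one works with a well-chosen concave modulus $f(|X_t^x - X_t^y|)$ in the spirit of Eberle's construction.

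First, I would introduce the synchronous coupling $(X_t^x,X_t^y)$ driven by a single Brownian motion $W$, so that $Z_t:=X_t^x-X_t^y$ satisfies
\begin{equation*}
dZ_t=\bigl(b_a(X_t^x)-b_a(X_t^y)\bigr)dt+a\bigl(\sigma(X_t^x)-\sigma(X_t^y)\bigr)dW_t.
\end{equation*}
Using \eqref{Eq:eq:V_confluence} and the Lipschitz continuity of $\Upsilon$ and $\sigma$, the scalar process $r_t:=|Z_t|$ has, outside an interval $[0,R_1]$ with $R_1\asymp R_0$, an almost sure contractive drift bounded by $-(\alpha_0-Ca^2)r_t$, while on $[0,R_1]$ the drift is at worst $+\kappa\,r_t$ with $\kappa$ controlled by $[\sigma\sigma^{\top}\nabla V]_{\mathrm{Lip}}+a^2[\Upsilon]_{\mathrm{Lip}}$, and the diffusion coefficient of $r_t$ is of order $a\,[\sigma]_{\mathrm{Lip}}\,r_t$ (bounded above while remaining nondegenerate thanks to \eqref{eq:ellipticity}). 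I would then construct, as in Eberle's method, a $\mathcal{C}^2$ non-decreasing concave function $f:[0,\infty)\to[0,\infty)$ with $f(0)=0$, linear for $r\ge R_1$, whose derivatives on $[0,R_1]$ are tuned so that It\^o's formula applied to $f(r_t)$ produces a pointwise inequality
\begin{equation*}
\mathcal{L}_a f(r)\le -\rho_a\,f(r),
\end{equation*}
for some explicit $\rho_a$. Because the only lower bound on the radial diffusion on $[0,R_1]$ is of order $a^2$, while the ``adverse'' drift is $O(1)$, solving the ODE defining $f$ forces multiplicative factors $\exp(\pm C/a^2)$ on both $f$ and $1/f$, and yields $\rho_a\asymp \exp(-C_2/a^2)$. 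Crucially, because $f$ is comparable to the identity up to these exponential factors, one gets $c_1 e^{-C_1/a^2}r\le f(r)\le c_2 e^{C_1/a^2}r$.

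Applying It\^o's formula to $f(r_t)$ and taking expectations removes the martingale part (after a localisation argument, controlled by the $L^2$-moments of $X^x$ and $X^y$ provided by \eqref{Eq:eq:V_assumptions}), giving $\mathbb{E}f(r_t)\le f(|x-y|)e^{-\rho_a t}$. Combining with the two-sided bounds on $f$ leads to
\begin{equation*}
\mathbb{E}|X_t^x-X_t^y|\le C\,e^{C_1/a^2}|x-y|e^{-\rho_a t},
\end{equation*}
which, by the Kantorovich--Rubinstein representation, proves \eqref{Eq:eq:W_confluence}. For part (b), since $\nu_a$ is the invariant distribution of $X$, I would use the classical argument: couple $X_t^x$ with $X_t^Y$ where $Y\sim\nu_a$ is chosen to realise the optimal $\mathcal{W}_1$-coupling with $\delta_x$; then $[X_t^Y]=\nu_a$ and by (a) together with Fubini,
\begin{equation*}
\mathcal{W}_1\bigl([X_t^x],\nu_a\bigr)\le \mathbb{E}|X_t^x-X_t^Y|\le C\,e^{C_1/a^2}e^{-\rho_a t}\,\mathbb{E}|x-Y|=C\,e^{C_1/a^2}e^{-\rho_a t}\mathcal{W}_1(\delta_x,\nu_a).
\end{equation*}

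The main obstacle is the explicit bookkeeping of the $a$-dependence in Eberle's construction of $f$: the length of the ``bad'' interval where the radial drift is non-contractive is $O(1)$, but the radial diffusion scales like $a^2$, so the relevant comparison ratio blows up like $1/a^2$, and one must verify that this degradation enters only through the exponential factors stated in \eqref{Eq:eq:W_confluence} (namely $e^{C_1/a^2}$ in the prefactor and $e^{-C_2/a^2}$ in the rate) and not in a worse form. Once this tracking is carried out rigorously along the lines of \cite[Corollary 2.4]{pages2020}, the two estimates of the theorem follow.
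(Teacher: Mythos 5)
Your overall architecture (Eberle-type concave modulus, explicit tracking of the $a$-dependence through the exponents, invariance of $\nu_a$ plus Fubini for part (b)) matches the paper's, and the bookkeeping you anticipate — prefactor $e^{C_1/a^2}$, rate $e^{-C_2/a^2}$ with $N\asymp r_0(K_1+K_2)/(a^2\ubar{\sigma}_0^2)$ in the exponent — is exactly what comes out. But there is one genuine gap at the very start: you propose a \emph{synchronous} coupling, and Eberle's method cannot be run on it. Under synchronous coupling the difference $Z_t=X_t^x-X_t^y$ has diffusion coefficient $a(\sigma(X_t^x)-\sigma(X_t^y))$, so the martingale part of $r_t=|Z_t|$ has bracket bounded \emph{above} by $a^2[\sigma]_{\mathrm{Lip}}^2 r_t^2\,dt$ but admits no lower bound whatsoever; the ellipticity assumption \eqref{eq:ellipticity} bounds $\sigma\sigma^\top(x)$ from below, not $\sigma(x)-\sigma(y)$. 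In the additive case $\sigma\equiv\mathrm{const}$ (covered by the theorem) this bracket is identically zero. Since the whole point of the concave $f$ is that the It\^o correction $\tfrac12 f''(r_t)\,d\langle r\rangle_t$, with $f''<0$, supplies the negative drift that beats the $O(1)$ adverse drift on $[0,R_1]$, a radial diffusion that can vanish there kills the argument: no choice of $f$ yields $\mathcal{L}_a f\le -\rho_a f$ in the non-dissipative region.

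The fix, which is what the paper does following Wang's Theorem 2.6, is to split the noise using ellipticity: write $\sigma\sigma^\top=\ubar{\sigma}^2+\ubar{\sigma}_0^2 I_d$ with $\ubar{\sigma}:=\sqrt{\sigma\sigma^\top-\ubar{\sigma}_0^2 I_d}$, represent the SDE with two independent Brownian motions as $dX_t=b_a(X_t)dt+a(\ubar{\sigma}(X_t)dW^1_t+\ubar{\sigma}_0\,dW^2_t)$, couple the $W^1$-component synchronously and the $W^2$-component by \emph{reflection} (orthogonal symmetrization of $dW^2$ across $X^x_t-Y^y_t$). This produces a coupling whose radial martingale has bracket $\langle M\rangle_t\ge 4a^2\ubar{\sigma}_0^2 t$ up to the coupling time, which is precisely the uniform (in the positions, though not in $a$) non-degeneracy your construction of $f$ needs. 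With that replacement, the rest of your plan — the choice $\bar p_t=\varepsilon p_t+1-e^{-Np_t}$, the resulting $\rho_a=e^{-C_2/a^2}$, the two-sided comparison of $f$ with the identity up to $e^{\pm C_1/a^2}$, and the argument for (b) — goes through as you describe.
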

\begin{proof}
$(a)$ We refine the proof of \cite[Theorem 2.6]{wang2020} to enhance the dependence of the constants in the parameter $a$. First we remark as in \cite[Section 4.5]{pages2020} in the proof of Corollary 2.4, that Assumption (2.17) of \cite{wang2020}, stating that there exist constants $K_1$, $K_2$ and $r_0>0$ such that, with $\ubar{\sigma} := \sqrt{\sigma \sigma^\top - \ubar{\sigma}_0^2 I_d}$,
\begin{align}
& \frac{a^2}{2}\|\ubar{\sigma}(x)-\ubar{\sigma}(y)\|^2 - \frac{|a^2(\sigma(x)-\sigma(y))^\top (x-y)|^2}{2|x-y|^2} + \langle b_a(x) - b_a(y), x-y \rangle \nonumber \\
& \quad \le \left( (K_1+K_2)\mathds{1}_{|x-y|\le r_0}-K_2 \right)|x-y|^2, \ x, \ y \in \mathbb{R}^d,
\label{eq:wang_2.17}
\end{align}
is true, since $a \in (0,A]$ and $\sigma\sigma^\top$ bounded, as soon as there exist positive constants $\widetilde{K}_1$, $\widetilde{K}_2$ and $R_1$ such that
$$ \forall x, y \in \mathbb{R}^d, \ \langle b_a(x) - b_a(y), x-y \rangle \le \widetilde{K}_1 \mathds{1}_{|x-y|\le R_1} - \widetilde{K}_2|x-y|^2 ,$$
which is, up to changing the positive constants, equivalent to
$$ \forall x, y \in \mathbb{R}^d, \ \langle b_0(x) - b_0(y), x-y \rangle \le \widetilde{K}_1 \mathds{1}_{|x-y|\le R_1} - \widetilde{K}_2|x-y|^2 ,$$
which is in turn equivalent to \eqref{Eq:eq:V_confluence}.
Then we repeat the argument leading to (4.3) in \cite{wang2020}. We reformulate the assumption of ellipticity \eqref{eq:ellipticity} as:
$$ dX_t = b_a(X_t) dt + a(\ubar{\sigma}(X_t)dW^1_t + \ubar{\sigma}_0 dW^2_t) ,$$
where $\ubar{\sigma} \ge 0$ and where $(W^1_t)$ and $(W^2_t)$ are two independent Brownian motions in $\mathbb{R}^d$ (which can be expressed in terms of $W$). For $x \ne y$, let $X^x$ be the solution of this SDE with $X_0=x$ and let $Y^y$ solve the following coupled SDE for $Y^y_0 = y$ :
$$ dY^y_t = b_a(Y^y_t)dt + a\ubar{\sigma}(Y^y_t)dW^1_t + a\ubar{\sigma}_0 \left( dW^2_t - 2 \frac{\langle X^x_t-Y^y_t, dW^2_t\rangle(X^x_t-Y^y_t)}{|X^x_t-Y^y_t|^2} \right) .$$
The process $Y^y$ is in fact defined by orthogonally symmetrizing the component of the noise in $W^2$ w.r.t. $X^x_t-Y^y_t$ at every instant t.
This SDE has a unique solution up to the coupling time
$$ T_{x,y} := \inf \lbrace t \ge 0 : \ X^x_t = Y^y_t \rbrace ,$$
and for $t \ge T_{x,y}$ we set $Y^y_t = X^x_t$. Then $Y^y$ has the same distribution as $X^y$ i.e. is a weak solution of \eqref{eq:X_SDE} with starting value $y$ and it follows from \eqref{eq:wang_2.17} and from the It\=o formula applied to $|X^x-Y^y|$ that for every $0 \le u \le t \le T_{x,y}$,
$$ |X^x_t-Y^y_t| - |X^x_u-Y^y_u| \le M_t - M_u + \int_u^t \left((K_1 + K_2)\mathds{1}_{|X^x_s-Y^y_s|\le r_0} - K_2 \right)|X^x_s-Y^y_s|ds, $$
where
$$ M_t = \int_0^t \frac{a\langle 2 \ubar{\sigma}_0 dW^2_s + (\ubar{\sigma}(X_s) - \ubar{\sigma}(Y^y_s))dW^1_t, X^x_s-Y^y_s \rangle }{|X^x_s-Y^y_s|} $$
is a true Brownian martingale with bracket process satisfying
\begin{equation}
\label{eq:wang:M_inequality}
\langle M \rangle_t \ge 4 a^2 \ubar{\sigma}_0^2 t .
\end{equation}
We now set, still like in the proof of (4.3) in \cite{wang2020},
$$ p_t := \left|X^x_t-Y^y_t\right| \quad \text{and} \quad \bar{p}_t := \varepsilon p_t + 1 - e^{-Np_t},$$ where
$$ N := \frac{r_0}{a^2\ubar{\sigma}_0^2}(K_1+K_2) \quad \text{ and } \quad \varepsilon := Ne^{-Nr_0} .$$
Then we have :
$$ \varepsilon p_t \le \bar{p}_t \le (N+\varepsilon)p_t, \quad \text{and} \quad \forall r \in [0,r_0), \ \frac{2N^2}{r(\varepsilon e^{Nr}+N)} \ge \frac{K_1+K_2}{a^2\ubar{\sigma}_0^2} .$$
Then using \eqref{eq:wang:M_inequality} we derive for all $0 \le u \le t \le T_{x,y}$:
\begin{align*}
& \bar{p}_t - \bar{p}_u \le \int_u^t (\varepsilon + Ne^{-Np_s})dM_s + \int_u^t (\varepsilon + Ne^{-Np_s})\left((K_1+K_2)\mathds{1}_{p_s\le r_0} - K_2 - \frac{2N^2 a^2 \ubar{\sigma}_0^2}{p_s(\varepsilon e^{Np_s}+N)} \right) p_s ds \\
& \quad \le \tilde{M}_t - \tilde{M}_u - K_2 \int_u^t (\varepsilon+Ne^{-Np_s})p_s ds \le \tilde{M}_t - \tilde{M}_u - \varepsilon K_2 \int_u^t p_s ds \le \tilde{M}_t - \tilde{M}_u - \frac{\varepsilon K_2}{N+\varepsilon} \int_u^t \bar{p}_s ds.
\end{align*}
So that we have
$$ \mathbb{E}[\bar{p}_t - \bar{p}_u] = \mathbb{E}[(\bar{p}_t - \bar{p}_u) \mathds{1}_{t \le T_{x,y}}] \le - \frac{\varepsilon K_2}{N+\varepsilon} \int_u^t \mathbb{E}\bar{p}_s ds ,$$
so that
$$ \frac{d}{dt} \mathbb{E}[\bar{p}_t] \le - \frac{\varepsilon K_2}{N+\varepsilon} \mathbb{E}[\bar{p}_t] $$
and then
$$ \mathbb{E} \bar{p}_t \le \bar{p}_0 e^{-\frac{\varepsilon K_2}{N+\varepsilon} t}. $$
Noting that $\bar{p}_0 \le (N+\varepsilon)|x-y|$, we have
$$ \mathbb{E} p_t \le \frac{N+\varepsilon}{\varepsilon}|x-y| e^{-\frac{\varepsilon K_2}{N+\varepsilon} t},.$$
so that
$$ \mathcal{W}_1\left(\left[X^x_t\right],\left[X^y_t\right]\right) \le \frac{N+\varepsilon}{\varepsilon}|x-y|e^{-\frac{\varepsilon K_2}{N+\varepsilon}t} \le C e^{C_1/a^2} |x-y| e^{-e^{-C_2/a^2} t} .$$

\medskip

\noindent $(b)$ As $\nu_a$ is the invariant distribution of the diffusion \eqref{eq:X_SDE}, using \eqref{Eq:eq:W_confluence} we have
\begin{align*}
\mathcal{W}_1\left(\left[X^x_t\right],\nu_a\right) & = \int_{\mathbb{R}^d} \mathcal{W}_1\left(\left[X^x_t\right],\left[X^y_t\right]\right) \nu_a(dy) \le C e^{C_1/a^2} e^{-\rho_a t} \int_{\mathbb{R}^d} |x-y| \nu_a(dy) \\
& \le C e^{C_1/a^2}e^{-\rho_a t} \mathcal{W}_1(\delta_x, \nu_a) .
\end{align*}
%
%
\end{proof}

\subsection{Time schedule and Wasserstein distance between Gibbs measures}

For $C_{(T)}>0$ and for $\beta>0$, let us define the time schedule that will be used for the plateau SDE in the next section:
\begin{equation}
\label{eq:def_T_n}
T_n := C_{(T)}n^{1+\beta},
\end{equation}
and by a slight abuse of notation we define
\begin{equation}
\label{eq:def_a_n}
a_n := a(T_n) = \frac{A}{\sqrt{\log(T_n+e)}} \quad \text{and} \quad \rho_n := \rho_{a_n} = e^{-C_2/a_n^2}.
\end{equation}

\begin{lemma}
\label{lemma:app:a_n_diff}
The sequence $a_n = A \log^{-1/2}(T_n+e)$ satisfies
\begin{equation}
\label{eq:a_n_diff}
0 \le a_n - a_{n+1} \asymp (n \log^{3/2}(n))^{-1}.
\end{equation}
\end{lemma}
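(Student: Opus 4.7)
The plan is to estimate $a_n - a_{n+1}$ via a mean value theorem argument applied to the smooth function $f(x) = A(\log(x+e))^{-1/2}$, since $a_n = f(T_n)$.

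First, I observe that $T_n = C_{(T)} n^{1+\beta}$ is strictly increasing, so $\log(T_n+e)$ is strictly increasing, so $a_n = A(\log(T_n+e))^{-1/2}$ is strictly decreasing. This immediately gives the left inequality $a_n - a_{n+1} \ge 0$.

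For the asymptotic two-sided estimate, I compute
$$ f'(x) = -\frac{A}{2} \cdot \frac{1}{(x+e)\,(\log(x+e))^{3/2}}, $$
and apply the mean value theorem on $[T_n, T_{n+1}]$: there exists $\xi_n \in (T_n, T_{n+1})$ with
$$ a_n - a_{n+1} = f(T_n) - f(T_{n+1}) = -f'(\xi_n)(T_{n+1}-T_n) = \frac{A}{2}\cdot\frac{T_{n+1}-T_n}{(\xi_n+e)(\log(\xi_n+e))^{3/2}}. $$
Next I estimate each factor. By the standard expansion $(n+1)^{1+\beta} - n^{1+\beta} \asymp n^\beta$ (elementary, from another mean value theorem or the binomial expansion), one has $T_{n+1} - T_n \asymp n^\beta$. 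Since $T_n \le \xi_n \le T_{n+1}$ and $T_n, T_{n+1} \asymp n^{1+\beta}$, we also have $\xi_n + e \asymp n^{1+\beta}$ and $\log(\xi_n + e) \asymp (1+\beta)\log n \asymp \log n$. Plugging these equivalences in yields
$$ a_n - a_{n+1} \asymp \frac{n^\beta}{n^{1+\beta}(\log n)^{3/2}} = \frac{1}{n(\log n)^{3/2}}, $$
which is exactly \eqref{eq:a_n_diff}.

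There is no real obstacle here; the only thing to be careful about is keeping track of the constants uniformly in $n$ (bounded away from $0$ and $\infty$) when passing from $\xi_n$ to $T_n$, but this is immediate because the ratio $T_{n+1}/T_n = (1+1/n)^{1+\beta} \to 1$, hence $\xi_n \asymp T_n$ and $\log(\xi_n+e)\asymp \log(T_n+e)\asymp \log n$ with explicit bounds valid for all $n \ge 1$.
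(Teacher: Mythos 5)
Your proof is correct and takes essentially the same route as the paper, which also estimates $a_n - a_{n+1}$ by the derivative of $n \mapsto A\log^{-1/2}(C_{(T)}n^{1+\beta}+e)$ and then reads off the order $\left(n\log^{3/2}(n)\right)^{-1}$; your mean value theorem formulation just makes the step $a_n - a_{n+1} \sim -\frac{d}{dn}a(T_n)$ explicit. No issues.
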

\begin{proof}
One straightforwardly checks that
\begin{align*}
a_n - a_{n+1} \sim -\frac{d}{dn}\left( \frac{A}{\sqrt{\log(C_{(T)}n^{1+\beta}{+}e)}} \right) = \frac{A\beta}{2\log^{3/2}(C_{(T)}n^{1+\beta}{+}e)\left(n{+}e/(C_{(T)}n^\beta)\right)} \asymp \frac{1}{n\log^{3/2}(n)}.
\end{align*}
\end{proof}

We prove the following result that will be useful to study the convergence of the plateau SDE.
\begin{proposition}
\label{prop:W_nu}
Let $\nu_a$, $a \in (0,A]$ be the Gibbs measure defined in \eqref{eq:def_nu}. Assume that $V$ is coercive, that $(x \mapsto |x|^2 e^{-2V(x)/A^2}) \in L^1(\mathbb{R}^d)$ and \eqref{Eq:eq:min_V}. Then for $n \in \mathbb{N}$,
$$ \mathcal{W}_1(\nu_{a_n},\nu_{a_{n+1}}) \le \frac{C}{n \log^{3/2}(n)} .$$
Moreover, for every $s$, $t \in [a_{n+1},a_n]$, we have
$$ \mathcal{W}_1(\nu_s,\nu_t) \le \frac{C}{n \log^{3/2}(n)} .$$
\end{proposition}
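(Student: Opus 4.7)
The plan is to derive both statements from the stronger uniform Lipschitz-in-$a$ estimate $\mathcal{W}_1(\nu_s, \nu_t) \le C|s-t|$ for $s, t \in (0, A]$: applied at $s = a_{n+1}$, $t = a_n$ and combined with Lemma \ref{lemma:app:a_n_diff}, this yields both estimates of the proposition, since for $s,t \in [a_{n+1}, a_n]$ one has $|s-t| \le a_n - a_{n+1} \asymp 1/(n\log^{3/2}(n))$. To prove this Lipschitz-in-$a$ bound I would appeal to Kantorovich--Rubinstein duality and control $|\tfrac{d}{da}\mathbb{E}_{\nu_a}[f]|$ uniformly over all $1$-Lipschitz $f$. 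Differentiating the Gibbs density $\nu_a = \mathcal{Z}_a e^{-2(V-V^\star)/a^2}$ under the integral sign (justified by dominated convergence, thanks to the coercivity of $V$ and the integrability hypothesis \eqref{eq:def_A}) gives the identity
\[
\frac{d}{da}\mathbb{E}_{\nu_a}[f] = \frac{4}{a^3}\,\mathrm{Cov}_{\nu_a}(f, V - V^\star),
\]
so the whole proof reduces to showing $|\mathrm{Cov}_{\nu_a}(f, V - V^\star)| \le C a^3$ uniformly in $a \in (0,A]$ and in $1$-Lipschitz $f$.

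For $a$ bounded below by some $a_0 > 0$ this covariance bound follows from Cauchy--Schwarz and uniform second-moment bounds on $\nu_a$ (themselves consequences of the integrability hypothesis). The delicate regime is $a \to 0$, where the factor $a^3$ has to come from the concentration of $\nu_a$ near $\argmin(V) = \{x_1^\star, \ldots, x_{m^\star}^\star\}$. I would introduce a smooth partition of unity $(\chi_i)_{0 \le i \le m^\star}$ with each $\chi_i$, $i \ge 1$, supported in a small ball $B_i$ around $x_i^\star$ on which the second-order Taylor expansion $V(x) - V^\star = \tfrac{1}{2}(x - x_i^\star)^\top \nabla^2 V(x_i^\star)(x - x_i^\star) + O(|x - x_i^\star|^3)$ holds, and $\chi_0$ supported away from $\argmin(V)$; by \eqref{Eq:eq:min_V} and the coercivity of $V$ the tail carries $\nu_a$-mass $O(e^{-c/a^2})$ and is negligible. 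Decomposing $\mathrm{Cov}_{\nu_a}(f, V - V^\star)$ via the law of total covariance (conditioning on the mode index $I$), the two resulting contributions are bounded as follows. Within each mode, writing $f(x) = f(x_i^\star) + h(x)$ with $|h| \le |x - x_i^\star|$, using $0 \le V - V^\star \le C|x - x_i^\star|^2$ on $B_i$, and invoking the Laplace-scaling moment bounds $\mathbb{E}_{\nu_a^{(i)}}[|x - x_i^\star|^k] \le Ca^k$ (for $\nu_a^{(i)} := \chi_i \nu_a / \int \chi_i\, d\nu_a$, whose rescaled variable $(x-x_i^\star)/a$ converges to $\mathcal{N}(0, \nabla^2 V(x_i^\star)^{-1}/2)$), one obtains $|\mathrm{Cov}_{\nu_a^{(i)}}(f, V - V^\star)| = O(a^3)$. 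Between modes, the crucial point is that the leading-order Laplace expansion $\mathbb{E}_{\nu_a^{(i)}}[V - V^\star] = a^2 d/4 + O(a^4)$ has its $a^2$-coefficient mode-\emph{independent}, so the between-mode covariance $\mathrm{Cov}_I(\mathbb{E}[f\,|\,I], \mathbb{E}[V - V^\star\,|\,I])$ is only $O(a^4)$ and is dominated by the within-mode contribution.

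The main obstacle will be making the Laplace expansion fully quantitative and uniform for $a \in (0, A]$: specifically, establishing the rescaled polynomial moment bounds $\mathbb{E}_{\nu_a^{(i)}}[|x - x_i^\star|^k] \le Ca^k$ together with explicit tracking of the sub-leading $O(a^4)$ correction to $\mathbb{E}_{\nu_a^{(i)}}[V - V^\star]$ that produces the between-mode cancellation. These estimates are standard but must be carried out carefully since we need only \eqref{Eq:eq:min_V} and the integrability assumption, not the stronger hypotheses used elsewhere in the paper. Once the covariance bound $|\mathrm{Cov}_{\nu_a}(f, V - V^\star)| \le Ca^3$ is in place, it yields $|\tfrac{d}{da}\mathbb{E}_{\nu_a}[f]| \le C$ uniformly; integrating gives $|\mathbb{E}_{\nu_s}[f] - \mathbb{E}_{\nu_t}[f]| \le C|s-t|$, hence $\mathcal{W}_1(\nu_s, \nu_t) \le C|s-t|$, and combining with Lemma \ref{lemma:app:a_n_diff} yields the conclusion.
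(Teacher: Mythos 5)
Your proposal is correct in strategy but takes a genuinely different route from the paper. The paper's proof couples $\nu_{a_n}$ and $\nu_{a_{n+1}}$ via the acceptance--rejection construction of Lemma \ref{lemma:acceptance_rejection} (using the pointwise domination $\nu_{a_{n+1}} \le M_n \nu_{a_n}$ with $M_n = \mathcal{Z}_{a_{n+1}}/\mathcal{Z}_{a_n}$), and then extracts the rate from two Laplace-type asymptotics: $1 - M_n^{-1} \le C(a_n-a_{n+1})/a_n$ and $\big|\mathbb{E}|X-Y| - \mathbb{E}|X-\tilde X|\big| \le C a_{n+1}(a_n-a_{n+1})/a_n$, concluding since $\mathbb{E}|X-\tilde X| \asymp a_n$. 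You instead prove the stronger and cleaner statement $\mathcal{W}_1(\nu_s,\nu_t) \le C|s-t|$ by dualizing and differentiating in $a$; your identity $\frac{d}{da}\mathbb{E}_{\nu_a}[f] = \frac{4}{a^3}\mathrm{Cov}_{\nu_a}(f, V-V^\star)$ is correct, and the reduction to $|\mathrm{Cov}_{\nu_a}(f,V-V^\star)| \le Ca^3$ is the right target (it is sharp: for $V-V^\star = x^2/2$ in $d=1$ and $f=|\cdot|$ the covariance is exactly of order $a^3$). Your decomposition is sound: the within-mode bound follows from Cauchy--Schwarz and the scaling $\mathbb{E}_{\nu_a^{(i)}}|x-x_i^\star|^k \le Ca^k$, and you correctly identify the one nontrivial cancellation that makes the between-mode term harmless, namely that the leading coefficient in $\mathbb{E}_{\nu_a^{(i)}}[V-V^\star] = \tfrac{d}{4}a^2 + o(a^2)$ is the same for every mode (it equals $\tfrac12\mathrm{tr}(H_i \cdot H_i^{-1}/2)$ and so does not depend on $H_i = \nabla^2 V(x_i^\star)$); note that for the covariance bound you only need the remainder to be $O(a^3)$, not $O(a^4)$, so you need not invoke the odd-symmetry cancellation of the cubic Taylor term. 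The quantitative Laplace estimates you defer (uniform rescaled moments, exponentially small tail mass handled via the integrability hypothesis \eqref{eq:def_A}, domination for differentiating under the integral for $a$ bounded away from $A$) are of exactly the same nature as those the paper carries out for $\mathcal{Z}_a^{-1}$ and for $\mathbb{E}|X-Y|-\mathbb{E}|X-\tilde X|$, so nothing essentially new is hidden there. What your route buys is a uniform Lipschitz-in-$a$ modulus that delivers both claims of the proposition at once (the paper dispatches the second claim with ``the proof is similar''); what it costs is the justification of differentiability of $a \mapsto \mathbb{E}_{\nu_a}[f]$ and the bookkeeping of the law of total covariance, which the coupling argument avoids.
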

The proof of this proposition is given in the Supplementary Material. It relies on the following lemma.
\begin{lemma}
\label{lemma:acceptance_rejection}
Let $\mu$ and $\nu$ be two probability distributions on $\mathbb{R}^d$ with  densities $f$ and $g$ respectively with finite moments of order $p$. Assume that there exists $M \ge 1$ such that $f \le Mg$. Then
$$ \mathcal{W}_p(\mu,\nu)^p \le \mathbb{E}|X-Y|^p - \frac{1}{M}\mathbb{E}|X-\tilde{X}|^p,$$
where $X$ and $\tilde{X} \sim \mu$, $Y \sim \nu$ and $X$, $\tilde{X}$ and $Y$ are mutually independent.
\end{lemma}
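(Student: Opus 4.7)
The plan is to construct an explicit acceptance-rejection coupling, following the classical idea behind the acceptance-rejection sampling algorithm. First I would observe that the hypothesis $f \le M g$ with $M \ge 1$ ensures that $\tilde h := (Mg - f)/(M-1)$ is nonnegative (assuming $M > 1$; if $M = 1$ then $f = g$ and the inequality is the trivial $0 \le 0$). Since $\int f = \int g = 1$, the function $\tilde h$ is a probability density, and $g$ admits the mixture decomposition
\[
g \;=\; \frac{1}{M}\, f \;+\; \frac{M-1}{M}\,\tilde h.
\]

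Next I would realize a coupling $(X', Y')$ of $\mu$ and $\nu$ as follows: draw $X \sim \mu$, $Z \sim \tilde h$, and $U \sim \text{Bernoulli}(1/M)$ mutually independent; set $X' := X$ and let $Y' := X$ if $U = 1$, and $Y' := Z$ if $U = 0$. By the mixture decomposition the law of $Y'$ is exactly $\nu$, while $Y' = X'$ on the event $\{U = 1\}$. Using this candidate in the definition of the Wasserstein distance gives
\[
\mathcal{W}_p(\mu,\nu)^p \;\le\; \mathbb{E}|X' - Y'|^p \;=\; \left(1 - \frac{1}{M}\right) \mathbb{E}|X - Z|^p.
\]

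Finally I would expand $\mathbb{E}|X - Z|^p$ by substituting the definition $\tilde h = (Mg - f)/(M-1)$ and splitting the double integral to recognise two expectations taken under the product measures $f(x)g(z)\,dx\,dz$ and $f(x)f(z)\,dx\,dz$ respectively:
\[
\mathbb{E}|X - Z|^p \;=\; \frac{M}{M-1}\, \mathbb{E}|X - Y|^p \;-\; \frac{1}{M-1}\, \mathbb{E}|X - \tilde X|^p,
\]
with $X$, $\tilde X$, $Y$ mutually independent and $X, \tilde X \sim \mu$, $Y \sim \nu$, exactly as in the statement. Multiplying by $(M-1)/M = 1 - 1/M$ then delivers the announced inequality. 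There is no serious obstacle: the only step requiring care is checking that $\tilde h$ is a probability density (nonnegativity comes from $f \le M g$, total mass one from $\int f = \int g = 1$), after which the remainder is a direct computation.
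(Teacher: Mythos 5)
Your proof is correct and follows essentially the same route as the paper: both construct an acceptance--rejection coupling of $\mu$ and $\nu$ that coincides on an event of probability $1/M$, and the joint law you obtain from the mixture decomposition $g=\tfrac1M f+\tfrac{M-1}{M}\tilde h$ is identical to the one the paper builds with a uniform acceptance variable. The final computation of $\mathbb{E}|X'-Y'|^p$ matches the paper's line for line, and your remark on the degenerate case $M=1$ is a harmless addition.
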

\begin{proof}
We define a coupling on $\mu$ and $\nu$ inspired from the acceptance rejection sampling as follows. Let $X \sim \mu$, $Y \sim \nu$, $U \sim \mathcal{U}([0,1])$ and $X$, $Y$, $U$ are independent, and let
$$X'= Y \mathds{1}\lbrace U \le f(Y)/(Mg(Y))\rbrace + X \mathds{1}\lbrace U > f(Y)/(Mg(Y))\rbrace .$$
Then adapting the proof of the acceptance rejection method, $X' \sim \mu$ and we have:
\begin{align*}
\mathbb{E}|X'-Y|^p & = \mathbb{E}|Y-X|^p \mathds{1}\lbrace U > f(Y)/(Mg(Y))\rbrace \\
& = \int_{(\mathbb{R}^d)^2} |y-x|^p \left(\int_0^1 \mathds{1}\lbrace u > f(y)/(Mg(y))\rbrace du\right) f(x)g(y)dxdy \\
& = \int_{(\mathbb{R}^d)^2} |y-x|^p f(x)g(y)dxdy - \frac{1}{M} \int_{(\mathbb{R}^d)^2} |y-x|^p f(x)f(y)dx dy \\
& = \mathbb{E}|X-Y|^p - \frac{1}{M}\mathbb{E}|X-\tilde{X}|^p.
\end{align*}
\end{proof}

\begin{lemma}
\label{lemma:W_nu_a_nu_star}
We have
\begin{equation}
\mathcal{W}_1(\nu_{a_n}, \nu^\star) \le Ca_n .
\end{equation}
\end{lemma}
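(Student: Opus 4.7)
The plan is to use the Kantorovich--Rubinstein dual representation
$$ \mathcal{W}_1(\nu_{a_n},\nu^\star) = \sup_{[f]_{\mathrm{Lip}}\le 1}\left|\int f\,d(\nu_{a_n}-\nu^\star)\right| $$
together with a Laplace-type expansion of $\nu_a$ around its concentration points. By $\mathcal{H}_{V1}$, fix $r>0$ small enough that the balls $B_i := \mathbf{B}(x_i^\star, r)$ are pairwise disjoint and that on each $B_i$ one has the Taylor expansion
$$ V(x)-V^\star = \tfrac{1}{2}\langle H_i(x-x_i^\star),x-x_i^\star\rangle + R_i(x-x_i^\star),\quad H_i:=\nabla^2 V(x_i^\star)>0,\ |R_i(h)|\le C|h|^3. $$
Let $B_\infty:=(\bigcup_i B_i)^c$. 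By coercivity of $V$ and the fact that $V>V^\star$ on $B_\infty$, there exists $\eta>0$ such that $V\ge V^\star+\eta$ on $B_\infty$, hence $\nu_a(B_\infty)=O(e^{-2\eta/a^2})$, which is negligible compared to $a$.

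Next, perform on each $B_i$ the change of variables $y=(x-x_i^\star)/a$. The exponent becomes $-\langle H_i y,y\rangle - 2R_i(ay)/a^2 = -\langle H_i y,y\rangle + O(a|y|^3)$, so
$$ \int_{B_i} e^{-2(V-V^\star)/a^2}dx = a^d\!\int_{\mathbf{B}(0,r/a)}\! e^{-\langle H_i y,y\rangle}\bigl(1+O(a|y|^3)\bigr)dy = a^d\bigl(\pi^{d/2}(\det H_i)^{-1/2}+O(a)\bigr), $$
using that the Gaussian has finite third moment and that the tail beyond $r/a$ is exponentially small. Summing and adding the exponentially small contribution of $B_\infty$ gives $\mathcal{Z}_a^{-1}=a^d\bigl(\sum_i\pi^{d/2}(\det H_i)^{-1/2}+O(a)\bigr)$, which yields
$$ p_i(a):=\nu_a(B_i) = w_i + O(a), \quad w_i=\frac{(\det H_i)^{-1/2}}{\sum_j(\det H_j)^{-1/2}}, $$
where the $w_i$ are exactly the weights appearing in $\nu^\star$. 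The same rescaling also gives
$$ \int_{B_i}|x-x_i^\star|\,d\nu_a(x) = a\cdot\mathcal{Z}_a\!\int_{\mathbf{B}(0,r/a)}a^d|y|\,e^{-\langle H_i y,y\rangle}dy\,(1+O(a)) = O(a). $$

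Finally, fix a $1$-Lipschitz $f$ and, WLOG, assume $f(x_1^\star)=0$, so $|f(x_i^\star)|\le|x_i^\star-x_1^\star|\le C$ and $|f(x)|\le|x-x_1^\star|$. Decompose
$$ \int f\,d\nu_a - \int f\,d\nu^\star = \sum_{i=1}^{m^\star}\int_{B_i}(f(x)-f(x_i^\star))\,d\nu_a(x) + \sum_{i=1}^{m^\star}(p_i(a)-w_i)f(x_i^\star) + \int_{B_\infty} f\,d\nu_a. $$
The first sum is bounded by $\sum_i\int_{B_i}|x-x_i^\star|\,d\nu_a = O(a)$; the second is $O(a)$ by the preceding step and boundedness of $f(x_i^\star)$; the third is bounded using Cauchy--Schwarz by $\nu_a(B_\infty)^{1/2}\bigl(\int|x-x_1^\star|^2 d\nu_a\bigr)^{1/2}$, which is exponentially small in $1/a^2$ since $\int|x|^2d\nu_a$ is uniformly bounded for $a\in(0,A]$ (thanks to $|x|^2e^{-2V/A^2}\in L^1$ and the coercivity of $V$). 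Taking the supremum over $f$ and applying with $a=a_n$ yields $\mathcal{W}_1(\nu_{a_n},\nu^\star)\le C a_n$.

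The main technical step is the Laplace expansion of Step~2, where one must verify that the $O(a)$ remainder from the cubic Taylor term of $V$ is both the correct order and uniform in $a\in(0,A]$; everything else reduces to packaging this expansion into the Kantorovich--Rubinstein duality.
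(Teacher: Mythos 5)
Your route is genuinely different from the paper's. The paper does not expand $\nu_a$ around the minima at all: it first establishes (Proposition \ref{prop:W_nu}, via the acceptance--rejection coupling of Lemma \ref{lemma:acceptance_rejection}) the consecutive bound $\mathcal{W}_1(\nu_{a_n},\nu_{a_{n+1}})\le C(n\log^{3/2}n)^{-1}$, deduces that $(\nu_{a_n})$ is a Cauchy sequence for $\mathcal{W}_1$ whose limit must coincide with the weak limit $\nu^\star$, and then bounds $\mathcal{W}_1(\nu_{a_n},\nu^\star)$ by the tail of the series, which is $\asymp\log^{-1/2}(n)\asymp a_n$. The key feature of that route is that the one-step error is driven by the factor $(a_n-a_{n+1})/a_n$ obtained by comparing $e^{-2z/a_n^2}$ and $e^{-2z/a_{n+1}^2}$, so no rate for the Laplace approximation itself is ever required.

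That is precisely where your argument has a gap. You correctly single out the Laplace expansion as the crux, but the bound $|R_i(h)|\le C|h|^3$ requires $\nabla^2 V$ to be locally Lipschitz at the $x_i^\star$'s, i.e.\ essentially $V\in\mathcal{C}^3$ near the minima, whereas in the setting where this lemma is used (Theorem \ref{thm:conv_X} and part (a) of Theorem \ref{thm:main}) $V$ is only assumed $\mathcal{C}^2$. Under $\mathcal{C}^2$ alone the remainder is only $o(|h|^2)$, the rescaled exponent is $-\langle H_i y,y\rangle+o(1)|y|^2$, and you only get $\mathcal{Z}_{a,i}^{-1}=a^d\bigl(\pi^{d/2}(\det H_i)^{-1/2}+o(1)\bigr)$ with no rate, hence $p_i(a)-w_i=o(1)$ only. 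This is harmless when $m^\star=1$ (after normalizing $f(x_1^\star)=0$ your middle term vanishes, and the other two terms use only second-order information), but for $m^\star\ge 2$ the term $\sum_i(p_i(a)-w_i)f(x_i^\star)$ is exactly what governs the rate, and without third-order control it can genuinely exceed $O(a)$: a $\mathcal{C}^2$ potential behaving like $\tfrac12|h|^2+c|h|^{2+\delta}$ at one minimum gives $p_i(a)-w_i\asymp a^\delta$. So either add the hypothesis that $\nabla^2 V$ is Lipschitz (or H\"older) near the minima --- in which case your proof is complete and arguably more transparent than the paper's --- or fall back on the paper's consecutive-comparison argument. The remaining steps (negligibility of $B_\infty$, the $O(a)$ bound on $\int_{B_i}|x-x_i^\star|\,d\nu_a$, the uniform second moment, and the final Kantorovich--Rubinstein assembly) are correct.
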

\begin{proof}
First let us prove that $\mathcal{W}_1(\nu_a,\nu^\star) \to 0$ as $a \to 0$. By Proposition \ref{prop:W_nu} and using that
$$ \textstyle \sum_{n \ge 2} (n\log^{3/2}(n))^{-1}< \infty ,$$
$(\nu_{a_n})$ is a Cauchy sequence in $(L^1(\mathbb{R}^d),\mathcal{W}_1)$ so converges to some limit measure $\widetilde{\nu}$.
But $(\nu_{a_n})$ also weakly converges to $\widetilde{\nu}$, so $\widetilde{\nu} = \nu^\star$.
Moreover, $\mathcal{W}_1(\nu_{a_n},\nu^\star)$ is bounded by the tail of the above series, which is of order $\log^{-1/2}(n)$.
\end{proof}

\section{Plateau case}
\label{sec:X}

We define $(X_t)$ as the solution the following SDE where the coefficients piecewisely depend on the time; $X$ is then said to be "by plateaux":
\begin{align}
\label{eq:def_X}
& X_0^{x_0} = x_0, \quad dX_t^{x_0} = b_{a_{k+1}}(X_t^{x_0})dt + a_{k+1} \sigma(X_t^{x_0})dW_t, \quad t \in [T_k,T_{k+1}],
\end{align}
where $b_a$ is defined in \eqref{eq:def_b}, $(T_n)$ is defined in \eqref{eq:def_T_n} and $(a_n)$ is defined in \eqref{eq:def_a_n}.
We note that although the coefficients are not continuous, the process $(X_t^{x_0})$ is well defined as it is the continuous concatenation of the solutions of the equations on the intervals $[T_k, T_{k+1}]$.
More generally, we define $(X^{x,n}_t)$ as the solution of
\begin{align*}
X^{x,n}_0 &= x, \quad dX_t^{x,n} = b_{a_{k+1}}(X_t^{x,n})dt + a_{k+1} \sigma(X_t^{x,n})dW_t, \quad t \in [T_k-T_n,T_{k+1}-T_n], \ k \ge n,
\end{align*}
i.e. $(X_t^{x,n})$ has the law of $(X_{T_n+t})_{t \ge 0}$ conditionally to $X_{T_n}=x$. We have $X_{t}^x = X_{t}^{x,0}$.

\begin{theorem}
\label{thm:conv_X}
Let $X$ be defined in \eqref{eq:def_X}. If
\begin{equation}
\label{eq:hyp_A}
A > \max \left( \sqrt{(1+\beta^{-1})C_2}, \sqrt{(1+\beta)C_1} \right),
\end{equation}
where $C_1$ and $C_2$ are given in in \eqref{Eq:eq:W_confluence}, then for every $x_0 \in \mathbb{R}^d$:
$$ \mathcal{W}_1([X^{x_0}_t], \nu^\star) \underset{t \rightarrow \infty}{\longrightarrow} 0.$$
More precisely, for $t \ge 0$ we have:
$$ \mathcal{W}_1([X^{x_0}_t], \nu^\star) \le Ca(t)(1+|x_0|), $$
for all $C' < C_{(T)}$, for all large enough $n \ge n(C_{(T)}')$, on the time schedule $(T_n)$ we have
$$ \mathcal{W}_1([X^{x_0}_{T_{n}}], \nu_{a_{n}}) \le Cn^{-1+(\beta+1)C_1/A^2} e^{-(C')^{1-C_2/A^2}(\beta+1)n^{\beta-(\beta+1)C_2/A^2}}(1+|x_0|) $$
and we have
\begin{equation*}
\mathcal{W}_1([X^{x_0}_t],\nu_{a(t)}) \le \frac{C(1+|x_0|)}{t^{(\beta+1)^{-1}-C_1/A^2} \log^{3/2}(t)}.
\end{equation*}
\end{theorem}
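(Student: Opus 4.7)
The strategy is a plateau-by-plateau induction on the Wasserstein distance to the instantaneous Gibbs measure. Set $u_n := \mathcal{W}_1([X_{T_n}^{x_0}], \nu_{a_n})$. On each plateau $[T_n,T_{n+1}]$ the process $X$ is a homogeneous diffusion with constant coefficient $a_{n+1}$ whose unique invariant distribution is $\nu_{a_{n+1}}$. Applying \eqref{eq:confluence_nu_a} of Theorem \ref{thm:confluence} with starting law $[X_{T_n}^{x_0}]$ gives
$$
\mathcal{W}_1([X_{T_{n+1}}^{x_0}], \nu_{a_{n+1}}) \le C\, e^{C_1/a_{n+1}^2}\, e^{-\rho_{a_{n+1}}(T_{n+1}-T_n)}\, \mathcal{W}_1([X_{T_n}^{x_0}], \nu_{a_{n+1}}),
$$
and a triangle inequality plus Proposition \ref{prop:W_nu} yields $\mathcal{W}_1([X_{T_n}^{x_0}], \nu_{a_{n+1}}) \le u_n + C/(n\log^{3/2}n)$. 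This produces the affine recursion
$$
u_{n+1} \le \alpha_n\bigl(u_n + \epsilon_n\bigr), \qquad \alpha_n := Ce^{C_1/a_{n+1}^2}e^{-\rho_{a_{n+1}}(T_{n+1}-T_n)}, \quad \epsilon_n := \frac{C}{n\log^{3/2}n},
$$
initialized by $u_0 \le \mathcal{W}_1(\delta_{x_0},\nu_{a_1}) \le C(1+|x_0|)$.

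\textbf{Role of condition \eqref{eq:hyp_A}.} Plugging \eqref{eq:def_T_n}--\eqref{eq:def_a_n} and using Lemma \ref{lemma:app:a_n_diff}, for any $C'<C_{(T)}$ and $n$ large enough,
$$
e^{C_1/a_{n+1}^2} \le C\, n^{(\beta+1)C_1/A^2}, \qquad \rho_{a_{n+1}}(T_{n+1}-T_n) \ge (C')^{1-C_2/A^2}(\beta+1)\, n^{\beta-(\beta+1)C_2/A^2}.
$$
The first condition $A^2>(1+\beta^{-1})C_2$ is equivalent to $\beta-(\beta+1)C_2/A^2>0$, so $\alpha_n\to 0$ superpolynomially in $n$. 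The second condition $A^2>(\beta+1)C_1$ ensures that the polynomial prefactor $n^{(\beta+1)C_1/A^2}$ is outweighed by the $n^{-1}$ coming from $\epsilon_n$, which is needed for the bound to tend to $0$ after conversion to continuous time $t$.

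\textbf{Solving the recursion and interpolating.} Unwinding gives
$$
u_n \le \Bigl(\prod_{k=0}^{n-1}\alpha_k\Bigr) u_0 + \sum_{j=0}^{n-1}\Bigl(\prod_{k=j}^{n-1}\alpha_k\Bigr)\epsilon_j,
$$
and because $\alpha_k$ decays faster than any polynomial the dominant contribution comes from the last index $j=n-1$, producing $u_n \le C(1+|x_0|)\alpha_{n-1}\epsilon_{n-1}$; absorbing the residual $\log^{3/2}n$ factor inside the superexponential term yields the announced bound on $u_n$. For an arbitrary time $t\in[T_n,T_{n+1}]$ one applies Theorem \ref{thm:confluence}(b) again on the sub-plateau $[T_n,t]$, uses Proposition \ref{prop:W_nu} to pass from $\nu_{a_{n+1}}$ to $\nu_{a(t)}$ (both parameters lie in $[a_{n+1},a_n]$), and finally translates $n\asymp (t/C_{(T)})^{1/(1+\beta)}$ to obtain the exponent $(1+\beta)^{-1}-C_1/A^2$ in the bound on $\mathcal{W}_1([X_t^{x_0}],\nu_{a(t)})$. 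The convergence statement $\mathcal{W}_1([X_t^{x_0}],\nu^\star) \le C(1+|x_0|)a(t)$ then follows by the triangle inequality combined with Lemma \ref{lemma:W_nu_a_nu_star}, since the previous bound is $o(a(t))=o(\log^{-1/2}t)$.

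\textbf{Main obstacle.} The delicate point is the careful bookkeeping of the asymptotic equivalences $a_n-a_{n+1}\asymp (n\log^{3/2}n)^{-1}$, $T_{n+1}-T_n\sim (1+\beta)C_{(T)}n^\beta$ and $1/a_n^2\sim (1+\beta)\log(n)/A^2$ into inequalities valid for $n$ larger than an explicit threshold, which is the origin of the parameter $C'<C_{(T)}$. A second subtlety is verifying that the superexponential decay of $\alpha_k$ genuinely dominates the polynomial prefactor $e^{C_1/a_{k+1}^2}$ and the $1/k$ factor from $\epsilon_k$ for \emph{every} index $j<n-1$ in the sum, so that the crude bound by the last term is legitimate.
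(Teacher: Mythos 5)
Your proposal follows essentially the same route as the paper: the identical one-plateau contraction from Theorem \ref{thm:confluence}(b) combined with Proposition \ref{prop:W_nu} yields the same affine recursion $u_{n+1}\le \alpha_n(u_n+\epsilon_n)$, the two conditions in \eqref{eq:hyp_A} are used for exactly the same purposes (superpolynomial decay of $\alpha_n$, and $(\beta+1)C_1/A^2<1$ for the interpolated bound), and the unwinding with the last term dominating, the sub-plateau argument for intermediate $t$, and the final triangle inequality with Lemma \ref{lemma:W_nu_a_nu_star} all match the paper's proof. The argument is correct.
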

\begin{proof}
%
For fixed $x \in \mathbb{R}^d$ and using Theorem \ref{thm:confluence} we have:
$$ \mathcal{W}_1( [X^{x,n}_{T_{n+1}-T_n} ], \nu_{a_{n+1}}) \le C e^{C_1/a_{n+1}^2} e^{-\rho_{a_{n+1}} (T_{n+1}-T_n)} \mathcal{W}_1(\delta_x,\nu_{a_{n+1}}). $$
So integrating $x$ with respect to the law of $X^{x_0}_{T_n}$ (and using the existence of the optimal coupling, see for example \cite[Proposition 1.3]{wang2012}) yields:
\begin{equation}
\label{eq:proof:X:1}
\mathcal{W}_1([X^{x_0}_{T_{n+1}}], \nu_{a_{n+1}}) \le C e^{C_1/a_{n+1}^2} e^{-\rho_{a_{n+1}} (T_{n+1}-T_n)} \left( \mathcal{W}_1([X^{x_0}_{T_n}],\nu_{a_n}) + \mathcal{W}_1(\nu_{a_n},\nu_{a_{n+1}}) \right) .
\end{equation}
Iterating this relation yields
\begin{align}
\mathcal{W}_1([X^{x_0}_{T_{n+1}}], \nu_{a_{n+1}}) & \le \mu_{n+1} \mathcal{W}_1(\nu_{a_n},\nu_{a_{n+1}}) + \mu_{n+1} \mu_n \mathcal{W}_1(\nu_{a_{n-1}}, \nu_{a_n}) + \cdots + \mu_{n+1} \cdots \mu_1 \mathcal{W}_1(\nu_{a_0},\nu_{a_1}) \nonumber \\
& \quad + \mu_{n+1} \cdots \mu_1 \mathcal{W}_1(\delta_{x_0}, \nu_{a_0}).
\label{eq:proof:X:2}
\end{align}
where
\begin{align}
\mu_n & := C e^{C_1/a_n^2}e^{-\rho_{a_n}(T_n-T_{n-1})} = C (T_n+e)^{C_1/A^2} e^{-(T_n+e)^{-C_2/A^2}(T_n-T_{n-1})} \nonumber \\
& \le C(C_{(T)}n^{\beta+1}+e)^{C_1/A^2} e^{-(C_{(T)}n^{\beta+1}+e)^{-C_2/A^2}C_{(T)}(\beta+1)(n-1)^\beta} \nonumber \\
& \le Cn^{(\beta+1)C_1/A^2} e^{-(C')^{1-C_2/A^2}(\beta+1)n^{\beta-(\beta+1)C_2/A^2}},
\label{eq:def_mu}
\end{align}
where we have used \eqref{eq:def_T_n} and where the last inequality holds for large enough $n$. Note that $\mu_n$ is bounded by a sequence in the form of $n^\delta \exp(-Ln^\eta) = o(n^{-\ell})$ for every $\ell \ge 0$. Owing to \eqref{eq:hyp_A}, we have $\beta - (\beta+1)C_2/A^2 >0$.

On the other hand, if $Z \sim \nu_{a_0}$ then $\mathcal{W}_1(\delta_{x_0}, \nu_{a_0}) = \mathbb{E}|x_0 - Z| \le |x_0| + \mathbb{E}|Z|$.
Plugging this into \eqref{eq:proof:X:2} and using that $\mu_n \to 0$ so is bounded and smaller than $1$ for $n$ large enough and then $(\mu_{n-1} \cdots \mu_k)_{1 \le k \le n-1}$ is bounded ; using Proposition \ref{prop:W_nu} and that $\sum_{n} (n\log^{3/2}(n))^{-1} < \infty$ yields
\begin{align*}
\mathcal{W}_1([X^{x_0}_{T_{n+1}}], \nu_{a_{n+1}}) & \le \mu_{n+1} \mathcal{W}_1(\nu_{a_n},\nu_{a_{n+1}}) + C\mu_{n+1}\mu_n \left( \mathcal{W}_1(\nu_{a_{n-1}}, \nu_{a_n}) + \cdots + \mathcal{W}_1(\nu_{a_0}, \nu_{a_{1}}) \right) \\
& \quad + C\mu_{n+1}\mu_n\mathcal{W}_1(\delta_{x_0},\nu_{a_0}) \\
& \le \mu_{n+1} \mathcal{W}_1(\nu_{a_n},\nu_{a_{n+1}}) + C\mu_{n+1} \mu_n + C\mu_{n+1}\mu_n (1+|x_0|) \\
& \le C \frac{\mu_{n+1}}{n\log^{3/2}(n)}(1+|x_0|) \le C \mu_{n+1} a_{n+1}(1+|x_0|),
\end{align*}
where we used that $\mu_n = o(\mathcal{W}_1(\nu_{a_n},\nu_{a_{n+1}}))$.
Then using Lemma \ref{lemma:W_nu_a_nu_star} we have
$$ \mathcal{W}_1([X^{x_0}_{T_{n+1}}],\nu^\star) \le \mathcal{W}_1([X^{x_0}_{T_{n+1}}], \nu_{a_{n+1}}) + \mathcal{W}_1(\nu_{a_{n+1}}, \nu^\star) \le Ca_n (1+|x_0|) ,$$
where we used once again $\mu_n \to 0$.

\medskip

Now, let us prove that $\mathcal{W}_1([X^{x_0}_t],\nu^\star) \to  0$ as $t \to \infty$. For $t \in [0,T_{n+1}-T_n)$ we integrate \eqref{eq:confluence_nu_a} with respect to the law of $X^{x_0}_{T_n}$, giving
\begin{align}
\mathcal{W}_1([X^{x_0}_{T_n+t}], \nu_{a_{n+1}}) & \le Ce^{C_1 a_{n+1}^{-2}} e^{-\rho_{a_{n+1}} t} \mathcal{W}_1([X^{x_0}_{T_n}], \nu_{a_{n+1}}) \nonumber \\
& \le Ce^{C_1 a_{n+1}^{-2}}\left( \mathcal{W}_1([X^{x_0}_{T_n}], \nu_{a_n}) + \mathcal{W}_1(\nu_{a_n}, \nu_{a_{n+1}}) \right) \nonumber \\
& \le Ce^{C_1 a_{n+1}^{-2}} \mathcal{W}_1(\nu_{a_n}, \nu_{a_{n+1}})(1+|x_0|) \nonumber \\
& \le \frac{C(1+|x_0|)}{n^{1-(\beta + 1)C_1/A^2} \log^{3/2}(n)}.
\label{eq:proof:X_t_nu}
\end{align}
Now, for $t \ge 0$, let $n$ be such that $t \in [T_n,T_{n+1})$. Then $(n+1)\ge t^{1/(\beta+1)}$ and
\begin{align}
\mathcal{W}_1([X^{x_0}_{t}], \nu_{a(t)}) \le \mathcal{W}_1([X^{x_0}_{t}], \nu_{a_{n+1}}) + \mathcal{W}_1(\nu_{a_{n+1}}, \nu_{a(t)}) \le \frac{C(1+|x_0|)}{t^{(\beta+1)^{-1}-C_1/A^2} \log^{3/2}(t)},
\label{eq:proof:X_t_nu:2}
\end{align}
where we used the second claim of Proposition \ref{prop:W_nu}.

Furthermore owing to \eqref{eq:hyp_A} we have $(\beta + 1)C_1/A^2 < 1$, so that
$$ \mathcal{W}_1([X^{x_0}_{T_n+t}],\nu^\star) \le \mathcal{W}_1([X^{x_0}_{T_n+t}], \nu_{a_{n+1}}) + \mathcal{W}_1(\nu_{a_{n+1}},\nu^\star) \le Ca_n(1+|x_0|). $$

%
%
\end{proof}

\begin{remark}
We find again the classic schedule $a(t)$ of order $\log^{-1/2}(t)$. If for example we choose instead $a_n = \log(T_n)^{-(1+\varepsilon)/2}$ for some $\varepsilon > 0$, then we obtain
$$ \log(\mu_1 \cdots \mu_n) = n\log(C) + \frac{C_1}{A^2} \sum_{k=1}^n \log^{1+\varepsilon}(T_k) - \sum_{k=1}^n \frac{T_k-T_{k-1}}{T_k^{\log^\varepsilon(T_k) C_2/A^2}} . $$
Hence, as $T_n - T_{n-1} = o(T_n^{\log^\varepsilon(T_n) C_2/A^2})$, $\mu_1 \cdots \mu_n$ does not converge to $0$ whatever the value of $A>0$ is.
\end{remark}

\section{Continuously decreasing case}
\label{sec:Y}

We now consider $(Y_t)$ solution to \eqref{eq:def_Y} i.e. the Langevin equation where the time coefficient $a(t)$ before $\sigma$ is continuously decreasing. More generally, since $Y$ is solution to a non-homogeneous SDE, we define for every $x \in \mathbb{R}^d$ and for every fixed $u \ge 0$:
\begin{align}
\label{eq:def_Y:2}
Y_{0,u}^{x} & = x, \quad dY_{t,u}^{x} = b_{a(t+u)}(Y_{t,u}^{x})dt + a(t+u) \sigma(Y_{t,u}^{x}) dW_t,
\end{align}
so that $Y^x = Y^x_{\cdot, 0}$.
We define the kernel associated to $Y$ between the times $t$ and $t+u$ as $P^Y_{t,u}$ such that for all $f : \mathbb{R}^d \to \mathbb{R}^+$ measurable, $P^Y_{t,u} f(x) = \mathbb{E}[f(Y^x_{t,u})]$.
We also consider $X$ as defined in \eqref{eq:def_X} and its associated kernel denoted as $P^{X,n}_t$ such that for every $f: \mathbb{R}^d \to \mathbb{R}^+$ measurable, $P^{X,n}_t f(x) = \mathbb{E}[f(X^{x,n}_t)]$.

\subsection{Boundedness of the potential}

\begin{lemma}
\label{lemma:D.1a:cont}
Let $p>0$. Then there exists $C>0$ such that for every $n \ge 0$, for every $u \ge 0$ and for every $x \in \mathbb{R}^d$:
$$  \sup_{t \ge 0} \mathbb{E} V^p(X_t^{x,n}) \le CV^p(x) \ \text{ and } \ \sup_{t \ge 0} \mathbb{E} V^p(Y^x_{t,u}) \le C V^p(x) .$$
\end{lemma}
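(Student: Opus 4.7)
My plan is to establish a uniform Lyapunov inequality of the form $\mathcal{L}_t V^p \le -cV^p + C'$ on all of $\mathbb{R}^d$, where $\mathcal{L}_t$ denotes the (time-dependent) infinitesimal generator associated either to $X^{x,n}$ or to $Y^x_{\cdot,u}$. Since $a(t+u), a_{n+1} \in (0,A]$, all the constants below can be chosen uniformly in $n, u, t$. Once this Lyapunov inequality is proved, a standard Gr\"onwall argument applied to $g(t) := \mathbb{E}V^p(Y^x_{t,u})$ (or similarly for $X$) gives $g'(t) \le -cg(t) + C'$, hence $g(t) \le V^p(x) e^{-ct} + \frac{C'}{c}(1-e^{-ct})$. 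Invoking \eqref{Eq:eq:min_V} which ensures $V \ge V^\star > 0$, this in turn implies $g(t) \le C V^p(x)$ uniformly in $t$.

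First, I would write Itô's formula for $V^p(Y_{t,u}^x)$ (this is legitimate since $V \ge V^\star > 0$ and $V \in \mathcal{C}^2$, so $V^p$ is $\mathcal{C}^2$), localizing via $\tau_R = \inf\{t: |Y_{t,u}^x| > R\}$ to kill the local martingale part, and then let $R \to \infty$. The generator decomposes as
\begin{align*}
\mathcal{L}_t V^p & = -p V^{p-1} |\sigma^\top \nabla V|^2 + p a^2 V^{p-1} \langle \nabla V, \Upsilon\rangle \\
& \quad + \tfrac{a^2}{2}\bigl(p V^{p-1} \operatorname{tr}(\nabla^2 V\, \sigma\sigma^\top) + p(p-1) V^{p-2} |\sigma^\top \nabla V|^2\bigr).
\end{align*}
Using the ellipticity \eqref{eq:ellipticity} I bound the first term by $-p\ubar{\sigma}_0^2 V^{p-1}|\nabla V|^2$. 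Then using $|\nabla V|^2 \le CV$ from \eqref{Eq:eq:V_assumptions}, the boundedness of $\Upsilon$, of $\sigma\sigma^\top$ and of $\nabla^2 V$, and $a \le A$, the remaining terms are bounded by $C(V^{p-1/2} + V^{p-1})$.

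The main obstacle is producing a \emph{lower} bound $|\nabla V(x)|^2 \ge c V(x)$ for $|x|$ large enough so that the negative term dominates. For this I fix $y \in \textbf{B}(0,R_0)^c$ and let $|x| \to \infty$ in \eqref{Eq:eq:V_confluence}; combined with the Cauchy–Schwarz inequality and the boundedness of $\sigma\sigma^\top$ this yields $|\nabla V(x)| \ge \alpha'_0|x|$ for $|x|$ large. On the other hand \eqref{Eq:eq:V_assumptions} together with $\|\nabla^2 V\|_\infty < \infty$ gives $V(x) \le C(1+|x|^2)$. Hence for $|x| \ge R_1$ large enough, $|\nabla V(x)|^2 \ge c_0 V(x)$. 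On this region we therefore get $\mathcal{L}_t V^p \le -p\ubar{\sigma}_0^2 c_0 V^p + C(V^{p-1/2} + V^{p-1}) \le -\tfrac{p\ubar{\sigma}_0^2 c_0}{2} V^p + C'$, using that $V \ge V^\star$ so the lower-order terms are absorbed for large $V$. On the compact $\overline{\textbf{B}(0,R_1)}$ the continuity of all coefficients bounds $\mathcal{L}_t V^p$ by some constant $C''$. Combining both regions and using $V \ge V^\star > 0$ to absorb the constants into $V^p$ yields the global Lyapunov inequality $\mathcal{L}_t V^p \le -c V^p + C'$.

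Finally, taking expectations in the localized Itô formula and letting $R \to \infty$ (Fatou on the left, dominated convergence on the right, justified by the locally bounded integrand) gives $g(t) \le V^p(x) + C'/c \cdot(1 - e^{-ct})/(V^p(x)/V^p(x))$; more precisely, $g'(t) \le -cg(t) + C'$, so $g(t) \le V^p(x) + C'/c$, and since $V^p(x) \ge (V^\star)^p$, we conclude $g(t) \le CV^p(x)$ uniformly in $t$. The argument for $X^{x,n}$ is identical on each plateau $[T_k - T_n, T_{k+1} - T_n]$ since the coefficients there are those of an SDE with constant $a = a_{k+1} \le A$, and the bounds are preserved across plateaux by applying the inequality inductively with the value at $T_k - T_n$ as the new initial condition and using $V^p(\cdot) \ge (V^\star)^p > 0$ throughout.
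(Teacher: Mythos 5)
Your proof is correct and follows essentially the same route as the paper's: It\^o's formula applied to $V^p$, with the ellipticity making the term $-pV^{p-1}|\sigma^\top\nabla V|^2$ dominate all remaining (bounded or lower-order) contributions outside a compact set. The only difference is that you push this to the stronger Lyapunov inequality $\mathcal{L}_t V^p \le -cV^p + C'$ (via the quantitative bound $|\nabla V|^2 \ge c_0 V$ deduced from \eqref{Eq:eq:V_confluence} and the quadratic growth of $V$) and conclude by Gr\"onwall, whereas the paper only needs the $dt$-coefficient to be nonpositive for $|x|\ge R$ and concludes directly with $\mathbb{E}V^p(X_t^{x,n}) \le \max\bigl(\sup_{|z|\le R}V^p(z), V^p(x)\bigr)$; both arguments are valid and yield the same uniform bound.
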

\begin{proof}
By the It\=o Lemma, we have for $k \ge n$ and for $t \in [T_k-T_n,T_{k+1}-T_n)$:
\begin{align*}
dV^p(X^{x,n}_t) & = p\nabla V(X^{x,n}_t)^\top \cdot V^{p-1}(X^{x,n}_t) \left( -\sigma \sigma^\top (X^{x,n}_t) \nabla V(X^{x,n}_t) + a_{k+1}^2 \Upsilon(X^{x,n}_t)\right) dt \\
& \quad  +  p\nabla V(X^{x,n}_t)^\top \cdot V^{p-1}(X^{x,n}_t) a_{k+1} \sigma(X^{x,n}_t) dW_t \\
& \quad + \frac{p}{2} \left(\nabla^2V(X^{x,n}_t)V^{p-1}(X^{x,n}_t) + (p-1)|\nabla V(X^{x,n}_t)|^2 \cdot V^{p-2}(X^{x,n}_t)\right)a_{k+1}^2 \sigma \sigma^\top(X^{x,n}_t) dt.
\end{align*}
Using the facts that $(a_k)$, $\Upsilon$, $\sigma$, $\nabla^2 V$ are bounded, that $|\nabla V| \le CV^{1/2}$ and that $V$, $|\nabla V|$ are coercive and $\sigma \sigma^\top \ge \ubar{\sigma}_0^2 I_d$, there exists $R>0$ such that if $|X^{x,n}_t| \ge R$ then the coefficient of $dt$ in the last equation is bounded above by
\begin{align*}
& pV^{p-1} \nabla V(X^{x,n}_t)^T \cdot \left( -\ubar{\sigma}_0^2 (X^{x,n}_t)\nabla V(X^{x,n}_t) + C \right) + CV^{p-1}(X^{x,n}_t)||\sigma||_\infty^2 \le 0,
\end{align*}
so that
$$ \mathbb{E}[V^p(X^{x,n}_t)] \le \max\left(\sup_{|z|\le R} V^p(z) , V^p(x) \right) .$$
The proof is the same for $Y$, replacing $a_{k+1}$ by $a(t)$.
\end{proof}

\subsection{Strong and weak error bounds}

In this subsection we adapt the proofs to bound weak and strong errors from \cite{pages2020} while paying attention to the dependence in $a_n$.

\begin{lemma}
\label{lemma:3.4.b:Y}
Let $p \ge 1$ and let $\bar{\gamma} > 0$. There exists $C>0$ such that for all $n \ge 0$, $u$, $t\ge 0$ such that $u \in [T_n,T_{n+1}]$, $u+t \in [T_{n},T_{n+1}]$ and $t \le \bar{\gamma}$,
$$|| X_{t}^{x,n} - Y_{t,u}^x ||_p \le C \sqrt{t} (a_n - a_{n+1}). $$
\end{lemma}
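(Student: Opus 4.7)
The plan is a synchronous-coupling Gr\"onwall estimate: since both $X^{x,n}$ and $Y^{x}_{\cdot,u}$ are driven by the same Brownian motion $W$ on the common plateau $[T_n,T_{n+1}]$ and differ only through the scalar coefficient ($a_{n+1}$ versus $a(s+u)\in[a_{n+1},a_n]$), the difference process $Z_s := X^{x,n}_s - Y^x_{s,u}$ starts at $0$ and is driven by a source term of size $a_n - a_{n+1}$. I would decompose the drift and diffusion gaps as
\begin{align*}
b_{a_{n+1}}(X^{x,n}_s) - b_{a(s+u)}(Y^x_{s,u}) &= \bigl[b_{a_{n+1}}(X^{x,n}_s)-b_{a_{n+1}}(Y^x_{s,u})\bigr] + \bigl(a_{n+1}^2-a(s+u)^2\bigr)\Upsilon(Y^x_{s,u}), \\
a_{n+1}\sigma(X^{x,n}_s) - a(s+u)\sigma(Y^x_{s,u}) &= a_{n+1}\bigl[\sigma(X^{x,n}_s)-\sigma(Y^x_{s,u})\bigr] + \bigl(a_{n+1}-a(s+u)\bigr)\sigma(Y^x_{s,u}).
\end{align*}
The bracketed pieces are controlled by $[b]_{\text{Lip}}|Z_s|$ and $[\sigma]_{\text{Lip}}|Z_s|$ respectively, with Lipschitz constants uniform in $a\in[0,A]$ thanks to \eqref{Eq:eq:sigma_assumptions} and the remark following it. The remaining pieces carry the factor $a_n-a_{n+1}$ (using $|a_{n+1}^2-a(s+u)^2|\le 2A(a_n-a_{n+1})$ and $|a_{n+1}-a(s+u)|\le a_n-a_{n+1}$) multiplied by the globally bounded functions $\Upsilon$ and $\sigma$.

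For $p\ge 2$, taking $p$-th moments, applying Jensen's inequality to the drift integral and the Burkholder--Davis--Gundy inequality to the martingale integral yields
\begin{align*}
\mathbb{E}|Z_t|^p \le C_p\,t^{p-1}\int_0^t \mathbb{E}|D_s|^p\,ds + C_p\,t^{p/2-1}\int_0^t \mathbb{E}\|M_s\|^p\,ds,
\end{align*}
where $D_s$ and $M_s$ denote the drift and diffusion integrands in the decomposition above. Substituting the two splittings and invoking $t\le\bar\gamma$ to absorb the factors $t^{p-1}$ and $t^{p/2-1}$ into constants times $t^{p/2}$ and $1$ respectively, one obtains
\begin{align*}
\mathbb{E}|Z_t|^p \le C\,t^{p/2}(a_n-a_{n+1})^p + C\int_0^t \mathbb{E}|Z_s|^p\,ds,
\end{align*}
and Gr\"onwall's lemma on $[0,\bar\gamma]$ closes the estimate as $\|Z_t\|_p\le C\sqrt{t}(a_n-a_{n+1})$. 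The range $1\le p<2$ then follows immediately from the monotonicity $\|\cdot\|_p\le\|\cdot\|_2$.

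There is no deep obstacle: this is a textbook Gr\"onwall-type comparison and the moment bounds of Lemma~\ref{lemma:D.1a:cont} are not even required, because $\Upsilon$ and $\sigma$ are globally bounded so the source terms do not depend on $V(Y^x_{s,u})$. The only point requiring care is bookkeeping---keeping the small factor $(a_n-a_{n+1})$ segregated from the Lipschitz contributions so that it is not absorbed into the Gr\"onwall constant, and verifying that all the constants ($[b]_{\text{Lip}}$, $\|\sigma\|_\infty$, $\|\Upsilon\|_\infty$, the BDG constant and the Gr\"onwall exponential, the latter bounded since $t\le\bar\gamma$) are independent of $n$, $u$ and $x$, which is guaranteed by the uniformity in $a\in[0,A]$ built into \eqref{Eq:eq:sigma_assumptions}.
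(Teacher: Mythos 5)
Your proof is correct and follows essentially the same route as the paper: a synchronous coupling with the identical drift/diffusion decomposition, isolating the $(a_n-a_{n+1})$ source terms via the boundedness of $\Upsilon$ and $\sigma$, and closing with Gr\"onwall on $[0,\bar\gamma]$ (then $\|\cdot\|_p\le\|\cdot\|_2$ for $p<2$). The only immaterial difference is that you run Gr\"onwall on $\mathbb{E}|Z_t|^p$ via Jensen/H\"older, whereas the paper works with the $L^p$-norms directly and linearizes the BDG term $\bigl(\int_0^t\varphi(s)^2\,ds\bigr)^{1/2}$ with a Young-inequality trick before applying Gr\"onwall.
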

\begin{proof}
We first consider the case $p \ge 2$. Noting that $a_{n+1} \le a(u+s) \le a_{n}$ for all $s \in [0,t]$ and using Lemma \ref{lemma:BDG} in the Appendix, with in mind that $b_a = b_0 + a^2 \Upsilon$, we have
\begin{align*}
\| X_t^{x,n} - Y_{t,u}^x \|_p & \le \left\| \int_0^t (b_{a_{n+1}}(X_s^{x,n}) - b_{a(u+s)}(Y_{s,u}^x)) ds \right\|_p + \left\| \int_0^t (a_{n+1} \sigma(X_s^{x,n}) {-} a(u+s)\sigma(Y_{s,u}^x)) dW_s \right\|_p \\
& \le [b]_{\text{Lip}} \int_0^t ||X_s^{x,n} - Y_{s,u}^x||_p ds + \int_0^t ||a_{n+1}^2 \Upsilon(X_s^{x,n}) - a(u+s)^2 \Upsilon(Y_{s,u}^x)||_p ds \\
& \quad + C^{BDG}_p a_{n+1} [\sigma]_{\text{Lip}}\left( \int_0^t ||X_s^{x,n} - Y_{s,u}^x||^2_p ds \right)^{1/2} + \left|\left| \int_0^t \sigma(Y_{s,u}^x)(a_{n+1} {-} a(u{+}s)) dW_s \right|\right|_p \\
& \le [b]_{\text{Lip}} \int_0^t ||X_s^{x,n} - Y_{s,u}^x||_p ds + ||\Upsilon||_\infty (a_n^2 - a_{n+1}^2)t + a_{n+1}^2 [\Upsilon]_{\text{Lip}}  \int_0^t ||X_s^{x,n} - Y_{s,u}^x||_p ds \\
& \quad + C^{BDG}_p a_{n+1} [\sigma]_{\text{Lip}}\left( \int_0^t ||X_s^{x,n} - Y_{s,u}^x||^2_p ds \right)^{1/2} + ||\sigma||_\infty ||W_1||_p \sqrt{t} (a_n - a_{n+1}),
\end{align*}
where we used the generalized Minkowski inequality.
Set $\varphi(t) := \sup_{0 \le s \le t} ||X_s^{x,n} - Y_{s,u}^x||_p$ and $\psi(t) := ||\Upsilon||_\infty (a_n^2 - a_{n+1}^2)t + ||\sigma||_\infty ||W_1||_p \sqrt{t} (a_n - a_{n+1})$. Both functions are non-decreasing and
$$ \varphi(t) \le \psi(t) + ([b]_{\text{Lip}} + a_{n+1}^2[\Upsilon]_{\text{Lip}}) \int_0^t ||X_s^{x,n} - Y_{s,u}^x||_p ds + C^{BDG}_p a_{n+1} [\sigma]_{\text{Lip}}\left( \int_0^t ||X_s^{x,n} - Y_{s,u}^x||^2_p ds \right)^{1/2} .$$
Moreover, for every $\alpha>0$:
$$ \left( \int_0^t \varphi(s)^2 ds\right)^{1/2} \le \sqrt{\varphi(t)}\sqrt{\int_0^t \varphi(s) ds} \le \frac{\alpha}{2} \varphi(t) + \frac{1}{2\alpha} \int_0^t \varphi(s) ds.$$
Taking $\alpha = \left(C^{BDG}_p a_{n+1} [\sigma]_{\text{Lip}}\right)^{-1}$ yields:
$$ \varphi(t) \le 2 \psi(t) + \left(2[b]_{\text{Lip}} + 2a_{n+1}^2[\Upsilon]_{\text{Lip}} + (C^{BDG}a_{n+1}[\sigma]_{\text{Lip}})^2\right) \int_0^t \varphi(s) ds .$$
So the Gronwall Lemma yields for every $t \in [0,\bar{\gamma}]$
$$ \varphi(t) \le 2 e^{(2[b]_{\text{Lip}} + 2a_{n+1}^2[\Upsilon]_{\text{Lip}} + (C^{BDG}_p a_{n+1}[\sigma]_{\text{Lip}})^2)\bar{\gamma}} \psi(t) ,$$
which completes the proof for $p \ge 2$, noting that $a_n^2 - a_{n+1}^2 \le 2 a_n (a_n - a_{n+1}) = o(a_n-a_{n+1})$.
If $p \in [1,2)$, the inequality is still true remarking that $\| \cdot \|_p \le \| \cdot \|_2$.
\end{proof}

\begin{lemma}
\label{lemma:3.4.a}
Let $p \ge 1$ and let $\bar{\gamma} > 0$. There exists a real constant $C \ge 0$ such that for all $n \ge 0$,
$$ \forall t \in [0, \bar{\gamma}], \ ||X_t^{x,n} - x||_p \le CV^{1/2}(x)\sqrt{t} .$$
\end{lemma}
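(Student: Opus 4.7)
The plan is to proceed by a direct Minkowski / Burkholder-Davis-Gundy estimate on the SDE satisfied by $X^{x,n}$, combined with the a priori moment bound on $V(X^{x,n}_s)$ furnished by the previous Lemma \ref{lemma:D.1a:cont}. Since $t \in [0,\bar{\gamma}]$ may in principle straddle a plateau boundary for small $n$, I would first observe that the coefficients appearing on each piece are of the form $b_{a_{k+1}}$ and $a_{k+1}\sigma$ with $a_{k+1} \in (0,A]$. In particular, the inequality $1 + |b_a(x)| \le C V^{1/2}(x)$ established just after \eqref{Eq:eq:V_confluence} holds uniformly in $a \in [0,A]$, and $\sigma$ is bounded by \eqref{Eq:eq:sigma_assumptions}. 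So, writing $k(s)$ for the plateau index such that $s \in [T_{k(s)}-T_n, T_{k(s)+1}-T_n)$,
\begin{equation*}
X^{x,n}_t - x = \int_0^t b_{a_{k(s)+1}}(X^{x,n}_s)\,ds + \int_0^t a_{k(s)+1}\,\sigma(X^{x,n}_s)\,dW_s.
\end{equation*}

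Next, I would apply the generalized Minkowski inequality to the drift and the BDG inequality (Lemma \ref{lemma:BDG}) to the martingale part:
\begin{equation*}
\|X^{x,n}_t - x\|_p \le \int_0^t \|b_{a_{k(s)+1}}(X^{x,n}_s)\|_p\,ds + C^{BDG}_p\,A\,\|\sigma\|_\infty \left(\int_0^t \| \mathbf{1} \|_p^2\,ds\right)^{1/2},
\end{equation*}
or more carefully with the $\sigma(X^{x,n}_s)$ kept inside, bounded by $\|\sigma\|_\infty$. Using $|b_a(\cdot)| \le C V^{1/2}(\cdot)$ uniformly in $a \in [0,A]$ reduces the drift contribution to $C \int_0^t \|V^{1/2}(X^{x,n}_s)\|_p\,ds$.

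The key input is then Lemma \ref{lemma:D.1a:cont}: for any $q \ge 1$, $\sup_{s \ge 0}\mathbb{E}V^q(X^{x,n}_s) \le C_q V^q(x)$. Taking $q = p/2$ (or $q = 1 \vee p/2$ and Jensen if $p < 2$), we get $\|V^{1/2}(X^{x,n}_s)\|_p = (\mathbb{E}V^{p/2}(X^{x,n}_s))^{1/p} \le C V^{1/2}(x)$ uniformly in $s$ and $n$. Plugging back yields
\begin{equation*}
\|X^{x,n}_t - x\|_p \le C\,V^{1/2}(x)\,t + C\,A\,\|\sigma\|_\infty\sqrt{t} \le C\,V^{1/2}(x)\,\sqrt{t}\,(1 + \sqrt{\bar{\gamma}}),
\end{equation*}
where we have used $1 \le V^{1/2}(x)/\sqrt{V^\star}$ (since $V \ge V^\star > 0$) to absorb the constant next to $\sqrt{t}$ into a $V^{1/2}(x)$ factor, and $t \le \sqrt{\bar{\gamma}}\sqrt{t}$ for $t \in [0,\bar{\gamma}]$. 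This delivers the claim with $C$ depending only on $p$, $\bar{\gamma}$, $A$, $\|\sigma\|_\infty$, and the constants from \eqref{Eq:eq:V_assumptions} and Lemma \ref{lemma:D.1a:cont}.

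I do not anticipate any serious obstacle: the only mild subtlety is the potential change of plateau in $[0,\bar{\gamma}]$, but since the constants $[b]_{\textup{Lip}}$, $\|\sigma\|_\infty$, and the linear-growth constant in $|b_a| \le C V^{1/2}$ are all uniform in $a \in [0,A]$, the argument is insensitive to where $k(s)$ jumps. The moment control from Lemma \ref{lemma:D.1a:cont} does all the work, and the final $\sqrt{t}$ rate is the standard one for SDE increments with bounded diffusion and linearly growing (in $V^{1/2}$) drift.
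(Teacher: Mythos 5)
Your proof is correct, but it is not the route the paper takes. The paper freezes the coefficients at the starting point, writing $b_{a_{n+1}}(X_s)=b_{a_{n+1}}(x)+(b_{a_{n+1}}(X_s)-b_{a_{n+1}}(x))$ and similarly for $\sigma$, bounds the frozen terms by $t\,|b_{a_{n+1}}(x)|\le C t V^{1/2}(x)$ and $A\|\sigma\|_\infty\|W_1\|_p\sqrt{t}$, controls the increments via the Lipschitz constants $[b]_{\mathrm{Lip}}$ and $[\sigma]_{\mathrm{Lip}}$, and closes with the same Gronwall argument (including the $\alpha$-trick for the square-root term) as in Lemma \ref{lemma:3.4.b:Y}. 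You instead use the linear-growth bound $|b_a|\le CV^{1/2}$ uniformly in $a\in[0,A]$ together with the a priori moment bound $\sup_{s}\mathbb{E}V^{p/2}(X^{x,n}_s)\le CV^{p/2}(x)$ from Lemma \ref{lemma:D.1a:cont}, which makes the drift term $O(V^{1/2}(x)t)$ directly and dispenses with Gronwall altogether; this is legitimate since that lemma precedes the present one and its proof does not rely on it, so there is no circularity. Your approach is shorter and exploits machinery already established; the paper's is self-contained within the Lipschitz framework and mirrors the proof of Lemma \ref{lemma:3.4.b:Y}, which is presumably why it is phrased that way. Two cosmetic points: the intermediate display with $\|\mathbf{1}\|_p$ is garbled (what you need is simply that the bracket of the martingale is deterministically bounded by $A^2\|\sigma\|_\infty^2 t$, valid for all $p\ge1$ after the usual reduction $\|\cdot\|_p\le\|\cdot\|_2$ for $p<2$), and your absorption of the additive $\sqrt{t}$ term into $V^{1/2}(x)\sqrt{t}$ via $V\ge V^\star>0$ is exactly the step the paper also needs implicitly, so it is fine.
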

\begin{proof}
We perform a proof similar to the proof of Lemma \ref{lemma:3.4.b:Y}. For $p \ge 2$ we have
\begin{align*}
& ||X_t^{x,n} - x||_p \le \left\| \int_0^t b_{a_{n+1}}(X_s^{x,n}) ds \right\|_p + \left\| \int_0^t a_{n+1} \sigma(X_s^{x,n}) ds \right\|_p \\
& \le t|b_{a_{n+1}}(x)| + A\|\sigma\|_\infty \|W\|_1 \sqrt{t} + [b]_{\text{Lip}} \int_0^t ||X_s^{x,n} - x||_p ds + A[\sigma]_{\text{Lip}} C_p^{BDG} \left(\int_0^t ||X_s^{x,n} - x||_p^2 ds \right)^{1/2}.
\end{align*}
From here we use the Gronwall Lemma as in the proof of Lemma \ref{lemma:3.4.b:Y}. For $p \in [1,2)$, we have $\| \cdot \|_p \le \| \cdot \|_2$.
\end{proof}

\begin{proposition}
\label{prop:3.5:Y}
Let $\bar{\gamma}>0$. There exists $C>0$ such that for every $g : \mathbb{R}^d \to \mathbb{R}$ being $\mathcal{C}^2$, for every $\gamma \in (0,\bar{\gamma}]$, every $n \ge 0$ and every $u \ge 0$ such that $u \in [T_n,T_{n+1}]$ and $u+\gamma \in [T_n,T_{n+1}]$:
\begin{align*}
& | \mathbb{E}\left[g(Y^x_{\gamma,u})\right] - \mathbb{E}\left[g(X^{x,n}_\gamma)\right]| \le C \gamma (a_n - a_{n+1}) \Phi_g(x) \\
\text{with } \ & \Phi_g(x) = \max\left(|\nabla g(x)|, \left|\left|\sup_{\xi \in (X^{x,n}_\gamma, Y_{\gamma,u}^x)} || \nabla^2 g(\xi) || \right|\right|_2, V^{1/2}(x)\left|\left|\sup_{\xi \in (x, X_\gamma^{x,n})} || \nabla^2 g(\xi) || \right|\right|_2 \right) .
\end{align*}
\end{proposition}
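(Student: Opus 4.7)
The plan is to perform a two-stage Taylor expansion of $g$, stopping deliberately at second-order derivatives so that $\nabla^3 g$ never appears. First I would write the integral-form Taylor expansion of $g$ around $X^{x,n}_\gamma$:
$$ g(Y^x_{\gamma,u}) - g(X^{x,n}_\gamma) = \nabla g(X^{x,n}_\gamma)^\top \bigl(Y^x_{\gamma,u} - X^{x,n}_\gamma\bigr) + R_1, $$
with $|R_1| \le \tfrac{1}{2}\sup_{\xi \in (X^{x,n}_\gamma, Y^x_{\gamma,u})}\|\nabla^2 g(\xi)\| \cdot |Y^x_{\gamma,u} - X^{x,n}_\gamma|^2$. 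Then, to expose the constant $\nabla g(x)$ appearing in $\Phi_g$, I would further expand $\nabla g(X^{x,n}_\gamma) = \nabla g(x) + \int_0^1 \nabla^2 g(x + t(X^{x,n}_\gamma - x))(X^{x,n}_\gamma - x)\, dt$. Taking expectations produces exactly three contributions, each to be matched with one of the three pieces of $\Phi_g$.

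For $\mathbb{E}|R_1|$, Cauchy--Schwarz together with Lemma \ref{lemma:3.4.b:Y} at $p = 4$ (giving $\|Y^x_{\gamma,u} - X^{x,n}_\gamma\|_4 \le C\sqrt{\gamma}(a_n - a_{n+1})$) yields
$$ \mathbb{E}|R_1| \le C\gamma (a_n - a_{n+1})^2 \Bigl\|\sup_{\xi \in (X^{x,n}_\gamma, Y^x_{\gamma,u})}\|\nabla^2 g(\xi)\|\Bigr\|_2, $$
which is stronger than required since $(a_n - a_{n+1})^2 \le A(a_n - a_{n+1})$, and supplies the second piece of $\Phi_g$. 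For the cross term $\mathbb{E}\bigl[\int_0^1 (Y^x_{\gamma,u} - X^{x,n}_\gamma)^\top \nabla^2 g(x + t(X^{x,n}_\gamma - x))(X^{x,n}_\gamma - x)\,dt\bigr]$, generalized H\"older with exponents $(2,4,4)$, combined with Lemma \ref{lemma:3.4.a} (giving $\|X^{x,n}_\gamma - x\|_4 \le C V^{1/2}(x)\sqrt{\gamma}$) and Lemma \ref{lemma:3.4.b:Y}, produces $C V^{1/2}(x)\gamma (a_n - a_{n+1}) \bigl\|\sup_{\xi \in (x, X^{x,n}_\gamma)}\|\nabla^2 g(\xi)\|\bigr\|_2$, which is the third piece of $\Phi_g$.

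It only remains to handle the contribution $\nabla g(x)^\top \mathbb{E}[Y^x_{\gamma,u} - X^{x,n}_\gamma]$, whose prefactor is deterministic. Taking expectations of the two SDEs kills the diffusion martingales, leaving $\int_0^\gamma \mathbb{E}\bigl[b_{a(u+s)}(Y^x_{s,u}) - b_{a_{n+1}}(X^{x,n}_s)\bigr]\,ds$. I would use the splitting
$$ b_{a(u+s)}(Y^x_{s,u}) - b_{a_{n+1}}(X^{x,n}_s) = \bigl[b_{a(u+s)}(Y^x_{s,u}) - b_{a(u+s)}(X^{x,n}_s)\bigr] + \bigl(a^2(u+s) - a_{n+1}^2\bigr)\Upsilon(X^{x,n}_s); $$
the first bracket is bounded in $L^1$ by $[b]_{\text{Lip}} C\sqrt{s}(a_n - a_{n+1})$ via Lemma \ref{lemma:3.4.b:Y} at $p = 1$, while the second is bounded by $2A(a_n - a_{n+1})\|\Upsilon\|_\infty$ since $a(u+s), a_{n+1} \in [0,A]$. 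Integrating over $s \in [0, \gamma] \subset [0, \bar\gamma]$ gives $|\mathbb{E}[Y^x_{\gamma,u} - X^{x,n}_\gamma]| \le C\gamma (a_n - a_{n+1})$, and multiplying by $|\nabla g(x)|$ delivers the first piece of $\Phi_g$. The main obstacle is structural rather than computational: $\Phi_g$ permits only $\nabla^2 g$, so one must avoid any route whose natural error term contains $\nabla^3 g$, ruling out in particular a crude generator-swap via It\^o that would require controlling $\nabla^2 g(Y) - \nabla^2 g(X)$. The two-stage Taylor expansion above is precisely tailored to stay within $\nabla^2 g$ while tracking a separate factor $(a_n - a_{n+1})$ through either the strong error bound of Lemma \ref{lemma:3.4.b:Y} or the direct control of $\mathbb{E}[Y^x_{\gamma,u} - X^{x,n}_\gamma]$.
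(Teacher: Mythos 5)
Your proposal is correct and follows essentially the same route as the paper: the two-stage Taylor expansion you describe (second-order remainder around $X^{x,n}_\gamma$, then peeling off $\nabla g(x)$ from $\nabla g(X^{x,n}_\gamma)$ via an integral form) is exactly the paper's decomposition into the three terms of \eqref{eq:3.5:Y:proof}, and your estimates for each piece — the drift splitting for $\mathbb{E}[Y^x_{\gamma,u}-X^{x,n}_\gamma]$, H\"older with Lemmas \ref{lemma:3.4.a} and \ref{lemma:3.4.b:Y} for the cross term, and Cauchy--Schwarz with the quadratic strong error for the remainder — coincide with the paper's. No gaps.
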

\begin{proof}
By the second order Taylor formula, for every $y$, $z \in \mathbb{R}^d$:
$$ g(z) - g(y) = \langle \nabla g(y) | z-y \rangle + \int_0^1 (1-s)\nabla^2 g(sz + (1-s)y) ds (z-y)^{\otimes 2} .$$
Applying this expansion with $y = X^{x,n}_\gamma$ and $z=Y_{\gamma,u}^x$ yields:
\begin{align}
\mathbb{E}[g(Y_{\gamma,u}^x) - g(X^{x,n}_\gamma)] & = \langle \nabla g(x) | \mathbb{E}[Y_{\gamma,u}^x - X^{x,n}_\gamma] \rangle + \mathbb{E}[\langle \nabla g(X^{x,n}_\gamma) - \nabla g(x), Y_{\gamma,u}^x - X^{x,n}_\gamma \rangle] \nonumber \\
\label{eq:3.5:Y:proof}
& \quad + \int_0^1 (1-s)\mathbb{E}\left[\nabla^2 g(s Y_{\gamma,u}^x + (1-s)X^{x,n}_\gamma)(Y_{\gamma,u}^x-X_\gamma^{x,n})^{\otimes 2} \right] ds .
\end{align}
The first term is bounded by $|\nabla g(x)| \cdot |\mathbb{E}[Y_{\gamma,u}^x - X_\gamma^{x,n}]|$, with
\begin{align*}
|\mathbb{E}[Y_{\gamma,u}^x - X_\gamma^{x,n}]| & = \left|\mathbb{E} \left[ \int_0^\gamma (b_{a(s+u)}(Y_{s,u}^x) {-} b_{a(s+u)}(X_s^{x,n})) ds\right] + \mathbb{E} \left[ \int_0^\gamma (b_{a(u+s)}(X_s^{x,n}) {-} b_{a_{n+1}}(X_s^{x,n})) ds\right]\right| \\
& \le C[b]_{\text{Lip}} (a_n - a_{n+1}) \int_0^\gamma \sqrt{s}ds + ||\Upsilon||_\infty \gamma (a_n^2 - a_{n+1}^2) \le C \gamma (a_n - a_{n+1}),
\end{align*}
where we used Lemma \ref{lemma:3.4.b:Y}.
Using Lemma \ref{lemma:3.4.a} and Lemma \ref{lemma:3.4.b:Y} again, the second term in the right hand side of \eqref{eq:3.5:Y:proof} is bounded by
$$ C \left|\left|\sup_{\xi \in (x, X_\gamma^{x,n})} || \nabla^2 g(\xi) || \right|\right|_2 \sqrt{\gamma}V^{1/2}(x) \sqrt{\gamma} (a_n - a_{n+1}). $$
Using Lemma \ref{lemma:3.4.b:Y}, the third term is bounded by
$$ \frac{1}{2} C\gamma (a_n - a_{n+1})^2 \left|\left|\sup_{\xi \in (X_\gamma^{x,n}, Y_{\gamma,u}^x)} || \nabla^2 g(\xi) || \right|\right|_2  .$$

\end{proof}

\begin{proposition}
\label{prop:3.6:Y}
Let $T$, $\bar{\gamma}>0$. There exists $C>0$ such that for every Lipschitz continuous function $f : \mathbb{R}^d \to \mathbb{R}$ and every $t \in (0,T]$, for all $n \ge 0$, for all $\gamma < \bar{\gamma}$ and every $u \in [T_n,T_{n+1}]$ such that $u+t+\gamma \in [T_n,T_{n+1}]$,
$$ \left| \mathbb{E}\left[P_t^{X,n} f(Y_{\gamma,u}^x)\right]  - \mathbb{E}\left[P_t^{X,n} f(X_\gamma^{x,n})\right] \right| \le C a_{n+1}^{-2}(a_n-a_{n+1}) [f]_{\textup{Lip}} \gamma t^{-1/2} V(x). $$
\end{proposition}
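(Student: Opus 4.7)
The plan is to apply Proposition \ref{prop:3.5:Y} with the function $g = P_t^{X,n} f$. To do so I need to control the quantity $\Phi_g(x)$ appearing in that proposition, which amounts to bounding $|\nabla P_t^{X,n} f|$ and (more importantly) $\|\nabla^2 P_t^{X,n} f\|$, in a way that tracks the dependence in the ellipticity parameter $a_{n+1}$.

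First I would use the standard Lipschitz-preservation of the semigroup of an SDE with Lipschitz coefficients: since the drift $b_a$ is uniformly Lipschitz in $a\in[0,A]$ and $\sigma$ is Lipschitz, a pathwise coupling argument gives $|\nabla P_t^{X,n} f|(x) \le C [f]_{\textup{Lip}}$ uniformly in $x$, $t\le T$ and $n$. Next, the genuine quantitative input is a Bismut-Elworthy-Li type integration-by-parts formula applied to the elliptic diffusion $X^{\cdot,n}$, whose diffusion matrix is $a_{n+1}\sigma$ with inverse of norm at most $a_{n+1}^{-1}\ubar{\sigma}_0^{-1}$. Differentiating once more against the Malliavin weight produces a factor $a_{n+1}^{-2}$, together with the classical $t^{-1/2}$ regularization rate; combined with moment bounds of the tangent flow this yields an estimate of the form
\begin{equation*}
\bigl\| \sup_{\xi \in (X_\gamma^{x,n},Y_{\gamma,u}^x)} \|\nabla^2 P_t^{X,n} f(\xi)\| \bigr\|_2 \le C \, a_{n+1}^{-2} \, t^{-1/2} \, [f]_{\textup{Lip}} \, V^{1/2}(x),
\end{equation*}
where the $V^{1/2}(x)$ factor comes from the potential moment estimates of Lemma \ref{lemma:D.1a:cont} applied to both $X^{x,n}_\gamma$ and $Y^x_{\gamma,u}$, together with Lemma \ref{lemma:3.4.a} and Lemma \ref{lemma:3.4.b:Y} used to control the sup over the segment.

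Plugging these two estimates into the definition of $\Phi_g(x)$ in Proposition \ref{prop:3.5:Y}, the dominant contribution is the one involving $V^{1/2}(x)\|\sup_\xi \|\nabla^2 g(\xi)\|\|_2$, which is bounded by $C a_{n+1}^{-2} t^{-1/2} [f]_{\textup{Lip}} V(x)$ (the two $V^{1/2}$ factors multiplying to give $V$). Multiplying by the $C\gamma(a_n-a_{n+1})$ prefactor of Proposition \ref{prop:3.5:Y} produces exactly the announced bound.

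The main obstacle is the quantitative Hessian estimate with the sharp $a_{n+1}^{-2}$ scaling: this is precisely the refinement of the one-step weak-error lemmas of \cite{pages2020} mentioned in the introduction, and must be established by applying the Bismut-Elworthy-Li formula with careful bookkeeping of the $a_{n+1}$-dependence in both the diffusion coefficient and the drift term $a_{n+1}^2\Upsilon$, while using the uniform ellipticity \eqref{eq:ellipticity} to control the inverse of $\sigma$ independently of $a_{n+1}$. The rest of the argument is a direct combination of the previously proved lemmas.
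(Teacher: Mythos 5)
Your proposal is correct and follows essentially the same route as the paper: apply Proposition \ref{prop:3.5:Y} to $g=P_t^{X,n}f$, bound $\Phi_g$ via derivative estimates for the semigroup of the uniformly elliptic diffusion with the $a_{n+1}^{-2}t^{-1/2}$ scaling (the paper imports this from \cite[Proposition 3.2(b)]{pages2020}, whose proof is precisely the Bismut--Elworthy--Li/tangent-flow argument you describe), and then control the sup-over-segment terms by $CV^{1/2}(x)$ using the Lipschitz continuity of $V^{1/2}$ together with Lemmas \ref{lemma:3.4.a} and \ref{lemma:3.4.b:Y}. The only cosmetic difference is that the paper bounds all three components of $\Phi_{g_t}$ uniformly by the $a_{n+1}^{-2}t^{-1/2}$ estimate rather than treating the first-order term separately, which changes nothing in the conclusion.
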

\begin{proof}
We apply Proposition \ref{prop:3.5:Y} to $g_t := P^{X,n}_t f$ with $t >0$. Following \cite[Proposition 3.2(b)]{pages2020} while paying attention to the dependence in the ellipticity parameter $a$, we have
\begin{align*}
\Phi_{g_t}(x) & \le C[f]_{\text{Lip}} a_{n+1}^{-2} t^{-1/2} \max\left(V^{1/2}(x), \left|\left|\sup_{\xi \in (X^{x,n}_\gamma, Y_{\gamma,u}^x)} V^{1/2}(\xi) \right|\right|_2, V^{1/2}(x)\left|\left|\sup_{\xi \in (x, X_\gamma^{x,n})} V^{1/2}(\xi) \right|\right|_2 \right).
\end{align*}
But following \eqref{Eq:eq:V_assumptions}, $\nabla V/V^{1/2}$ is bounded so $V^{1/2}$ is Lipschitz continuous and then
\begin{align*}
& \left|\left|\sup_{\xi \in (x, X_\gamma^{x,n})} V^{1/2}(\xi) \right|\right|_2 \le \left|\left| V^{1/2}(x) + [V^{1/2}]_{\text{Lip}} |X_\gamma^{x,n}-x| \right|\right|_2 \le CV^{1/2}(x) \\
& \left|\left|\sup_{\xi \in (X^{x,n}_\gamma, Y_{\gamma,u}^x)} V^{1/2}(\xi) \right|\right|_2 \le \left|\left| V^{1/2}(x) + [V^{1/2}]_{\text{Lip}} \max(|X^{x,n}_\gamma-x|, |Y^x_{\gamma,u}-x|) \right|\right|_2 \le CV^{1/2}(x),
\end{align*}
where we used Lemmas \ref{lemma:3.4.a} and \ref{lemma:3.4.b:Y}. We thus obtain the desired result.
\end{proof}

\subsection{Proof of Theorem \ref{thm:main}.(a)}
\label{subsec:proof_Y}

\begin{figure}
	\centering
	
\begin{tikzpicture}
  \foreach \x in {0,1,2,...,10}
    {        
      \coordinate (A\x) at ($(0,0)+(\x*0.8cm,0)$) {};
      \draw ($(A\x)+(0,5pt)$) -- ($(A\x)-(0,5pt)$);
    }
  \draw[ultra thick,red] ($(A0)+(0,7pt)$) -- ($(A0)-(0,7pt)$);
  \node[red] at ($(A0)+(0,3ex)$) {$T_n$};
  \draw[ultra thick,red] ($(A7)+(0,7pt)$) -- ($(A7)-(0,7pt)$);
  \node[red] at ($(A10)+(0,3ex)$) {$T_{n+1}$};
  \draw[ultra thick,red] ($(A10)+(0,7pt)$) -- ($(A10)-(0,7pt)$);
  \node[red] at ($(A7)+(0,-3ex)$) {$T_{n+1}-T$};
  \draw[ultra thick] (A0) -- (A10);
  
  \node at ($(A2)+(2ex,-3ex)$) {$\gamma$};
  \draw[>=triangle 45, <->] ($(A2)+(0,-1ex)$) -- ($(A3)+(0,-1ex)$);

\end{tikzpicture}

	\caption{\textit{Intervals for the domino strategy.}}
	\label{fig:domino}
\end{figure}
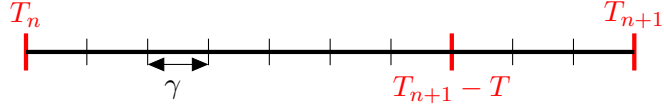

More precisely, we prove that for all $\beta >0$, if
\begin{equation}
\label{eq:hyp_A_Y}
A > \max\left(\sqrt{(\beta+1)(2C_1+C_2)}, \sqrt{(1+\beta^{-1})C_2} \right) ,
\end{equation}
then
$$ \mathcal{W}_1([Y^{x_0}_t], \nu_{a(t)}) \le \frac{C\max(1+|x_0|,V(x_0))}{\log^{3/2}(t)t^{(1+\beta)^{-1}-(2C_1+C_2)/A^2}} .$$

\begin{proof}
We apply the \textit{domino strategy} \eqref{eq:domino_strategy}.
Let us fix $T \in (0,T_1)$ and $\gamma \in (0,T_1-T)$. Here $\gamma$ is not linked to any Euler-Maruyama scheme but is an auxiliary tool for the proof. Let $n \ge 0$ and let $f : \mathbb{R}^d \to \mathbb{R}$ be Lipschitz continuous. We divide the two intervals $[T_n, T_{n+1}-T]$ and $[T_{n+1}-T, T_{n+1}]$ into smaller intervals of size $\gamma$ (see Figure \ref{fig:domino}) and for $x \in \mathbb{R}^d$ using the semi-group property of $P^{X,n}$ on $[T_n,T_{n+1})$ we write:
\begin{align*}
& \left| \mathbb{E}f(X_{T_{n+1}-T_n}^{x,n}) - \mathbb{E}f(Y_{T_{n+1}-T_n,T_n}^{x})\right| \\
& \quad \le \sum_{k=1}^{\lfloor(T_{n+1}-T_n-T)/\gamma\rfloor} \left| P^Y_{(k-1)\gamma,T_n} \circ (P^Y_{\gamma,T_n+(k-1)\gamma} - P^{X,n}_{\gamma}) \circ P^{X,n}_{T_{n+1}-T_n-k\gamma} f(x) \right| \\
& \quad \quad + \sum_{k=\lfloor(T_{n+1}-T_n-T)/\gamma\rfloor+1}^{\lfloor(T_{n+1}-T_n)/\gamma\rfloor-1} \left| P^Y_{(k-1)\gamma,T_n} \circ (P^Y_{\gamma,T_n+(k-1)\gamma} - P^{X,n}_{\gamma}) \circ P^{X,n}_{T_{n+1}-T_n-k\gamma} f(x) \right| \\
& \quad \quad + \left| P^Y_{\gamma(\lfloor (T_{n+1}-T_n)/\gamma \rfloor-1) ,T_n} \circ (P^Y_{\gamma + (T_{n+1}-T_n) \text{mod} \gamma, T_n + \gamma(\lfloor (T_{n+1}-T_n)/\gamma \rfloor-1)} - P^{X,n}_{\gamma + (T_{n+1}-T_n) \text{mod}(\gamma)}) f(x) \right| \\
& \quad =: (a) + (b) + (c).
\end{align*}
The term $(a)$ is the "ergodic term", for which the exponential contraction from Theorem \ref{thm:confluence} can be exploited. The terms $(b)$ and $(c)$ are the "error terms" where we bound the error on intervals of length no larger than $T$. The term $(c)$ is a remainder term due to the fact that $T_{n+1}-T_n$ is generally not a multiple of $\gamma$.

\medskip

$\bullet$ \textbf{Term $(a)$ :} It follows from Theorem \ref{thm:confluence} and Lemma \ref{lemma:3.4.b:Y} that
\begin{align*}
& |(P^Y_{\gamma,T_n+(k-1)\gamma} - P^{X,n}_{\gamma}) \circ P^{X,n}_{T_{n+1}-T_n-k\gamma} f(x) | \\
& \quad = |\mathbb{E}P^{X,n}_{T_{n+1}-T_n-k\gamma} f(X_{\gamma}^{x,n}) - \mathbb{E}P^X_{T_{n+1}-T_n-k\gamma,n} f(Y_{\gamma,T_n+(k-1)\gamma}^x)| \\
& \quad \le C e^{C_1 a_{n+1}^{-2}} e^{-\rho_{n+1}(T_{n+1}-T_n-k\gamma)} [f]_{\text{Lip}} \mathbb{E}|X_{\gamma}^{x,n} - Y_{\gamma,T_n+(k-1)\gamma}^x| \\
& \quad \le C e^{C_1 a_{n+1}^{-2}} e^{-\rho_{n+1}(T_{n+1}-T_n-k\gamma)} [f]_{\text{Lip}} \sqrt{\gamma} (a_n - a_{n+1})
\end{align*}
Integrating with respect to $P^Y_{(k-1)\gamma,T_n}$ and summing up yields
\begin{align*}
(a) & \le Ce^{C_1 a_{n+1}^{-2}} [f]_{\text{Lip}} \sqrt{\gamma} (a_n - a_{n+1}) \frac{e^{-\rho_{n+1} T} - e^{-\rho_{n+1} (T_{n+1}-T_n)}}{e^{\gamma\rho_{n+1}}-1} \\
& \le Ce^{C_1 a_{n+1}^{-2}} [f]_{\text{Lip}} \sqrt{\gamma} (a_n - a_{n+1})(\gamma \rho_{n+1})^{-1} .
\end{align*}

\medskip

$\bullet$ \textbf{Term $(b)$:} Applying Proposition \ref{prop:3.6:Y} yields:
$$ |(P^Y_{\gamma, T_n+(k-1)\gamma} - P^{X,n}_{\gamma}) \circ P^{X,n}_{T_{n+1}-T_n-k\gamma} f(x) | \le C a_{n+1}^{-2}(a_n-a_{n+1})[f]_{\text{Lip}} \frac{\gamma}{\sqrt{T_{n+1}-T_n-k\gamma}} V(x) .$$
Integrating with respect to $P^Y_{(k-1)\gamma,T_n}$ and using Lemma \ref{lemma:D.1a:cont} which guarantees that
$ P^Y_{(k-1)\gamma,T_n} V(x) \le C V(x) $
and summing with respect to $k$ implies
$$ (b) \le  C a_n^{-2}(a_n-a_{n+1})[f]_{\text{Lip}} \gamma V(x) \sum_{k=1}^{\lceil T/\gamma \rceil} (k\gamma)^{-1/2} \le C a_n^{-2}(a_n-a_{n+1})[f]_{\text{Lip}} T^{1/2} V(x)  .$$

\medskip

$\bullet$ \textbf{Term $(c)$:} Noting that $\gamma + (T_{n+1} - T_n) \ \text{mod} (\gamma) \le 2\gamma$, Lemma \ref{lemma:3.4.b:Y} yields
$$ (c) \le C[f]_{\text{Lip}}\sqrt{\gamma}(a_n-a_{n+1}). $$

\medskip

Now we sum up the terms $(a)$, $(b)$ and $(c)$. Since $\gamma$ is constant we have:
$$ \left| \mathbb{E}f(X_{T_{n+1}-T_n}^{x,n}) - \mathbb{E}f(Y_{T_{n+1}-T_n,T_n}^{x})\right| \le C e^{C_1 a_{n+1}^{-2}}(a_n-a_{n+1})\rho_{n+1}^{-1}[f]_{\text{Lip}}V(x) ,$$
so that for all $x \in \mathbb{R}^d$,
\begin{equation}
\label{eq:proof_Y:2}
\mathcal{W}_1([X_{T_{n+1}-T_n}^{x,n}], [Y_{T_{n+1}-T_n,T_n}^x]) \le Ce^{C_1 a_{n+1}^{-2}}(a_n-a_{n+1})\rho_{n+1}^{-1}V(x) .
\end{equation}
Temporarily setting $x_n := X^{x_0}_{T_n}$ and $y_n := Y^{x_0}_{T_n}$, we derive
\begin{align*}
& \mathcal{W}_1([X^{x_0}_{T_{n+1}}],[Y^{x_0}_{T_{n+1}}]) = \mathcal{W}_1([X_{T_{n+1}-T_n}^{x_n,n}], [Y_{T_{n+1}-T_n,T_n}^{y_n}]) \\
& \quad \quad \quad \quad \le \mathcal{W}_1([X_{T_{n+1}-T_n}^{x_n,n}],[X_{T_{n+1}-T_n}^{y_n,n}]) + \mathcal{W}_1([X_{T_{n+1}-T_n}^{y_n,n}], [Y_{T_{n+1}-T_n,T_n}^{y_n}]) \\
& \quad \quad \quad \quad \le Ce^{C_1 a_{n+1}^{-2}} e^{-\rho_{n+1} (T_{n+1}-T_n)} \mathcal{W}_1([X^{x_0}_{T_n}],[Y^{x_0}_{T_n}]) + Ce^{C_1 a_{n+1}^{-2}}(a_n-a_{n+1})\rho_{n+1}^{-1}\mathbb{E}V(Y^{x_0}_{T_n}),
\end{align*}
where we used Theorem \ref{thm:confluence} and \eqref{eq:proof_Y:2}.
We then apply Lemma \ref{lemma:D.1a:cont} which guarantees that $(\mathbb{E}V(Y^{x_0}_{T_n}))_n$ is bounded by $CV(x_0)$.
Let us denote
$$\lambda_n := Ce^{C_1 a_n^{-2}}(a_{n-1}-a_n) \rho_n^{-1} = Ce^{(C_1+C_2) a_n^{-2}}(a_{n-1}-a_n).$$
Owing to \eqref{eq:hyp_A_Y} we have $\lambda_n \to 0$.
Iterating this relation and using $(\mu_n)$ defined in \eqref{eq:def_mu} yields like in the proof of Theorem \ref{thm:conv_X}:
\begin{align*}
\mathcal{W}_1([X^{x_0}_{T_{n+1}}],[Y^{x_0}_{T_{n+1}}]) & \le CV(x_0) \left(\lambda_{n+1} + \mu_{n+1} \lambda_n + \mu_{n+1} \mu_n \lambda_{n-1} + \cdots + \mu_{n+1} \cdots \mu_2 \lambda_1 \right) \\
& \le CV(x_0) \left(\lambda_{n+1} + \mu_{n+1}\left(\lambda_n + \cdots + \lambda_1\right) \right) \\
& \le  CV(x_0) \left(\lambda_{n+1} + n\mu_{n+1} \right).
\end{align*}
But following \eqref{eq:def_mu} one checks that $n\mu_{n+1} = o(\lambda_{n+1})$ so that
$$ \mathcal{W}_1([X^{x_0}_{T_{n+1}}],[Y^{x_0}_{T_{n+1}}]) \le CV(x_0) \lambda_{n+1} \le \frac{CV(x_0)}{\log^{3/2}(n+1)(n+1)^{1-(\beta+1)(C_1+C_2)/A^2}} .$$
Moreover, owing to \eqref{eq:hyp_A_Y} and combining with Theorem \ref{thm:conv_X} we get
\begin{align*}
\mathcal{W}_1([Y^{x_0}_{T_n}],\nu_{a_n}) & \le \mathcal{W}_1([Y^{x_0}_{T_n}],[X^{x_0}_{T_n}]) + \mathcal{W}_1([X^{x_0}_{T_n}],\nu_{a_n}) \le \frac{C\max(1+|x_0|,V(x_0))}{\log^{3/2}(n)n^{1-(\beta+1)(C_1+C_2)/A^2}}
\end{align*}
and as the right hand side of these inequalities is in $o(a_n)$, we derive
$$ \mathcal{W}_1([Y^{x_0}_{T_n}],\nu^\star) \le \mathcal{W}_1([Y^{x_0}_{T_n}],[X^{x_0}_{T_n}]) + \mathcal{W}_1([X^{x_0}_{T_n}],\nu^\star) \le Ca_n \max(1+|x_0|,V(x_0)) .$$

\medskip

$\bullet$ \textbf{Convergence for $t \to \infty$ :}
Now let us prove that $\mathcal{W}_1([Y^{x_0}_{t}],\nu^\star) \to 0$ as $t \to \infty$. As before, let $T>0$. For $t \ge T$, then we perform the same domino strategy where we replace $T_{n+1}$ by $T_n + t$ and we consider the intervals $[T_n,T_n+t-T]$ and $[T_n+t-T,T_n+t]$. For $t < T$ then we only consider the terms $(b)$ and $(c)$ and we replace $T$ by $t$ in $(b)$. Doing so we obtain
$$ \mathcal{W}_1([X^{x,n}_t],[Y^{x}_{t,T_n}]) \le Ce^{C_1 a_{n+1}^{-2}}(a_n-a_{n+1})\rho_{n+1}^{-1}V(x). $$
So that, as before:
\begin{align*}
\mathcal{W}_1([X^{x_0}_{T_n+t}],[Y^{x_0}_{T_n+t}]) & \le Ce^{C_1 a_{n+1}^{-2}}\mathcal{W}_1([X^{x_0}_{T_n}],[Y^{x_0}_{T_n}]) + Ce^{C_1 a_{n+1}^{-2}}(a_n-a_{n+1})\rho_{n+1}^{-1}V(x_0) \\
& \le \frac{CV(x_0)}{\log^{3/2}(n)n^{1-(\beta+1)(2C_1+C_2)/A^2}}.
\end{align*}
Owing to \eqref{eq:hyp_A_Y} we have $1-(\beta+1)(2C_1+C_2)/A^2>1$, so that, using \eqref{eq:proof:X_t_nu},
$$ \mathcal{W}_1([Y^{x_0}_{T_n+t}],\nu_{a_{n+1}}) \le \mathcal{W}_1([Y^{x_0}_{T_n+t}],[X^{x_0}_{T_n+t}]) + \mathcal{W}_1([X^{x_0}_{T_n+t}],\nu_{a_{n+1}}) \le \frac{C\max(1+|x_0|,V(x_0))}{\log^{3/2}(n)n^{1-(\beta+1)(2C_1+C_2)/A^2}} .$$
We then prove the bound for $\mathcal{W}_1([Y^{x_0}_t], \nu_{a(t)})$ the same way as for \eqref{eq:proof:X_t_nu:2}, using the second claim of Proposition \ref{prop:W_nu}.

\end{proof}

\section{Continuously decreasing case : the Euler-Maruyama scheme}
\label{sec:bar-Y}

\begin{figure}
	\centering
\begin{tikzpicture}[scale = 1.]
\begin{axis}[grid=none,
			axis equal image,
            axis x line=center,
            axis y line=center,
            black,              
            xlabel={$t$},
            ylabel={$a(t)$},
            xlabel style={below right},
            ylabel style={above center},
            xmax=11,
            xmin=0,
            ymax=4,
            ymin=0,
            xtick={1.5,4,7,11},
            xticklabels={$T_1$,$T_2$,$T_3$,$T_4$},
            ytick={},
            yticklabels={},
            axis lines=middle,
            domain=0:11
            ]
\addplot[mark=none, red, thick] {2/sqrt(ln(x+2))} coordinate[pos=0.3] (aux);
\addplot[blue, domain=0:1.5, thick] {2/sqrt(ln(1.5+2))};
\addplot[blue, domain=1.5:4, thick] {2/sqrt(ln(4+2))};
\addplot[blue, domain=4:7, thick] {2/sqrt(ln(7+2))};
\addplot[blue, domain=7:11, thick] {2/sqrt(ln(11+2))};
\addlegendentry{Non plateau case}
\addlegendentry{Plateau case}
\end{axis}

\end{tikzpicture}
	\caption{\textit{Decreasing of the noise coefficient $a$ for the plateau and non plateau cases.}}
	\label{fig:a_decreasing}
\end{figure}
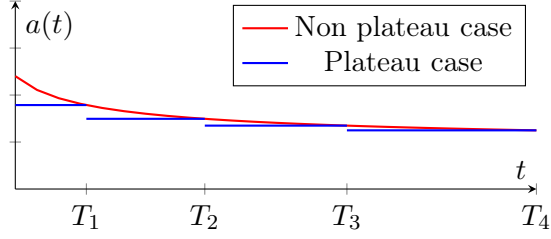

We now consider $(\bar{Y}_n)$ to be the Euler-Maruyama scheme of $(Y_t)$ with steps $(\gamma_n)$ defined in \eqref{eq:def_Y_bar} and we also consider its genuine interpolation defined in \eqref{eq:def_Y_bar_genuine}.
As with \eqref{eq:def_Y:2}, we define more generally for every $n \ge 0$, $(\bar{Y}^x_{t,\Gamma_n})_{t\ge 0}$, first at times $\Gamma_k-\Gamma_n$, $k \ge n$, by
\begin{align*}
\bar{Y}^x_{0,\Gamma_n} = x, \quad \bar{Y}^x_{\Gamma_{k+1}-\Gamma_n,\Gamma_n} & = \bar{Y}^x_{\Gamma_k -\Gamma_n, \Gamma_n} + \gamma_{k+1} \left(b_{a(\Gamma_k)}(\bar{Y}_{\Gamma_k-\Gamma_n,\Gamma_n}^{x}) + \zeta_{k+1}(\bar{Y}_{\Gamma_k-\Gamma_n,\Gamma_n}^{x}) \right) \\
& \quad + a(\Gamma_k)\sigma(\bar{Y}_{\Gamma_k-\Gamma_n,\Gamma_n}^{x})(W_{\Gamma_{k+1}} - W_{\Gamma_k}),
\end{align*}
then at every time $t$ by the genuine interpolation on the intervals $([\Gamma_k-\Gamma_n, \Gamma_{k+1}- \Gamma_n))_{k \ge n}$ as before. In particular $\bar{Y}^x = \bar{Y}^x_{\cdot,0}$.
Still more generally, we define $\bar{Y}^x_{t,u}$ where $u \in (\Gamma_n, \Gamma_{n+1})$ as
\begin{equation*}
\bar{Y}^x_{0,u} = x, \quad \bar{Y}^x_{t,u} = \left\lbrace \begin{array}{ll}
 x + t(b_a(x) + \zeta_{n+1}(x)) + a^2(u)\sigma(x)(W_t-W_{\Gamma_u}) & \text{ if } t \in [u, \Gamma_{n+1}] \\
 \bar{Y}^{\bar{Y}^x_{\Gamma_{n+1}-u,u}}_{t-(\Gamma_{n+1}-u),\Gamma_{n+1}} & \text{ if } t > \Gamma_{n+1} .
\end{array} \right.
\end{equation*}
For $n$, $k \ge 0$, for $u \in [\Gamma_k,\Gamma_{k+1})$ and $\gamma \in [0,\Gamma_{k+1}-u]$, let $P^{\bar{Y}}_{\gamma,u}$ be the transition kernel associated to $\bar{Y}_{\cdot,u}$ between the times $0$ and $\gamma$ i.e. for all $f : \mathbb{R}^d \to \mathbb{R}^+$ measurable, $P^{\bar{Y}}_{\gamma,u}f(x) = \mathbb{E}[f(\bar{Y}^x_{\gamma,u})]$.

\subsection{Boundedness of the potential}

\begin{lemma}
\label{lemma:D.1a}
Let $p \ge 1/2$. There exists a constant $C >0$ such that for every $k \ge 0$, for every $u \in [\Gamma_k, \Gamma_{k+1})$ and for every $x \in \mathbb{R}^d$:
$$ \sup_{n \ge k+1} \mathbb{E} V^p(\bar{Y}_{\Gamma_n-u,u}^x) \le CV^p(x). $$
\end{lemma}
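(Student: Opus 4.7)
The plan is to adapt the continuous-time argument of Lemma \ref{lemma:D.1a:cont} to the Euler-Maruyama scheme via a one-step Foster-Lyapunov inequality combined with a discrete Taylor expansion. Setting $Z_n := \bar{Y}^x_{\Gamma_n-u,u}$ for $n \ge k$, with the first (possibly fractional) step of length $\Gamma_{k+1}-u \le \gamma_{k+1}$ and subsequent standard Euler steps of length $\gamma_{n+1}$, my target is a recursive bound of the form
$$\mathbb{E}[V^p(Z_{n+1})\mid Z_n] \le (1-c\gamma_{n+1})V^p(Z_n) + C\gamma_{n+1}, \qquad n \ge n_0,$$
which, iterated and combined with $V \ge V^\star > 0$, yields the stated conclusion $\mathbb{E}V^p(Z_n) \le CV^p(x)$ (the finitely many initial steps with $\gamma_n > \bar\gamma$ being absorbed in $C$ via a trivial one-step bound).

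To obtain this inequality I would first establish the quantitative consequence of \eqref{Eq:eq:V_confluence}: fixing $y \in \textbf{B}(0,R_0)^c$ and letting $|x|\to\infty$ gives $\langle \sigma\sigma^\top\nabla V(x),x\rangle \ge \alpha_0 |x|^2/2$ for $|x|$ large; combined with $\|\sigma\sigma^\top\|_\infty < \infty$ and Cauchy-Schwarz this forces $|\nabla V(x)| \ge c|x|$, hence via \eqref{Eq:eq:V_assumptions} (which yields $V(x) \le C(1+|x|^2)$) one gets $|\nabla V(x)|^2 \ge cV(x)$ and $V(x) \ge c|x|^2$ for $|x| \ge R_1$ large enough. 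Then I perform a second-order Taylor expansion
$$V^p(Z_{n+1}) - V^p(Z_n) = \langle \nabla V^p(Z_n),\Delta_n\rangle + \tfrac12 \nabla^2 V^p(\xi_n)\cdot \Delta_n^{\otimes 2},$$
with $\xi_n \in (Z_n, Z_{n+1})$ and $\Delta_n = Z_{n+1}-Z_n$, and take conditional expectation. Using $\mathbb{E}[\zeta_{n+1}(Z_n)\mid Z_n]=0$ and $\mathbb{E}[\Delta W_n\mid Z_n]=0$, the linear term reduces to $\gamma_{n+1} p V^{p-1}(Z_n)\langle \nabla V(Z_n), b_{a(\Gamma_n)}(Z_n)\rangle$; by ellipticity \eqref{eq:ellipticity} and the boundedness of $\Upsilon$,
$$\langle \nabla V, b_a\rangle \le -\ubar\sigma_0^2 |\nabla V|^2 + Ca^2|\nabla V| \le -\tfrac{\ubar\sigma_0^2}{2}|\nabla V|^2 \le -c'V$$
for $|x| \ge R_2$, so the linear term contributes at most $-c\gamma_{n+1}V^p(Z_n)$ in that regime.

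For the quadratic remainder, \eqref{Eq:eq:V_assumptions} gives $\|\nabla^2 V^p\| \le p|p{-}1|V^{p-2}|\nabla V|^2 + pV^{p-1}\|\nabla^2 V\| \le CV^{p-1}$ (using $V \ge V^\star>0$ to absorb the case $p<1$); since $|\nabla V| \le CV^{1/2}$ makes $V^{1/2}$ globally Lipschitz, $V(\xi_n) \le 2V(Z_n) + C|\Delta_n|^2$ on the segment, and so $V^{p-1}(\xi_n) \le C V^{p-1}(Z_n) + C|\Delta_n|^{2(p-1)_+}$. The moment estimate $\mathbb{E}[|\Delta_n|^q\mid Z_n] \le C(\gamma_{n+1}^q V^{q/2}(Z_n) + \gamma_{n+1}^{q/2})$ (splitting $\Delta_n$ into drift, noise-gradient and Brownian parts and using $|b_a|^2+\mathbb{E}|\zeta|^2 \le CV$, $\|\sigma\|_\infty<\infty$ and Gaussian moments) then yields
$$\mathbb{E}\bigl[\tfrac12 \nabla^2 V^p(\xi_n)\cdot \Delta_n^{\otimes 2}\mid Z_n\bigr] \le C\gamma_{n+1}V^{p-1}(Z_n) + C\gamma_{n+1}^2 V^p(Z_n) + C\gamma_{n+1}^{p\vee 1}.$$
Adding the two contributions and using $V^{p-1}(Z_n) \le \tfrac{c}{2C}V^p(Z_n)+C'$ (since $V \ge V^\star$), this gives the desired Lyapunov inequality for $V(Z_n) \ge R_2$ and $\gamma_{n+1}$ small enough; for $V(Z_n) < V(R_2)$ the right-hand side is trivially bounded by a constant which can be merged into $C\gamma_{n+1}$.

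The main obstacle is controlling $\nabla^2 V^p(\xi_n)$ on the random segment $(Z_n,Z_{n+1})$ when the increment $\Delta_n$ is large relative to $Z_n$: the coefficient $V^{p-2}|\nabla V|^2$ in $\nabla^2 V^p$ makes the estimate delicate for $p\in[1/2,1)$, but the lower bound $V \ge V^\star>0$ and the Lipschitz property of $V^{1/2}$ make the bound $V^{p-1}(\xi_n) \le CV^{p-1}(Z_n)+C$ sufficient. The remaining book-keeping is standard: iterating the one-step inequality produces $\mathbb{E}V^p(Z_n) \le V^p(x)\prod_{j=k+1}^n(1-c\gamma_j) + C\sum_{j=k+1}^n\gamma_j\prod_{i=j+1}^n(1-c\gamma_i) \le V^p(x)+C/c$, and the uniform bound $\le CV^p(x)$ follows because $V^p \ge (V^\star)^p >0$.
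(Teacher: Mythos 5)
Your proposal follows essentially the same route as the paper: the paper also reworks the discrete Lyapunov argument of Lamberton--Pag\`es, performing a second-order Taylor expansion of $V^p$ along one Euler step, killing the martingale increments by conditioning, showing the drift term is negative outside a compact set (using that \eqref{Eq:eq:V_confluence} forces $|\nabla V|$ to be coercive) and that the second-order remainder, controlled via $\|\nabla^2(V^p)\|\le CV^{p-1}$ and the Lipschitz property of $V^{1/2}$, is absorbed for $n$ large because $\gamma_{n+1}^2=o(\gamma_{n+1})$; the only cosmetic difference is that you aim for a strict contraction $(1-c\gamma_{n+1})V^p+C\gamma_{n+1}$ while the paper settles for a nonpositive conditional increment outside a ball plus a $C\gamma_{n+1}$ bound inside. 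One point to repair: for $p\in[1/2,1)$ your bound $V^{p-1}(\xi_n)\le C$ (from $V\ge V^\star$) applied to the quadratic remainder produces a term $C\gamma_{n+1}^2\,V(Z_n)$, not $C\gamma_{n+1}^2\,V^p(Z_n)$, and $\gamma_{n+1}^2 V$ is \emph{not} dominated by $c\gamma_{n+1}V^p$ when $V(Z_n)$ is large at fixed $\gamma_{n+1}$, so the claimed one-step inequality does not follow as written in that range; the simplest fix is to prove the lemma for $p\ge 1$ by your argument and deduce the case $p\in[1/2,1)$ from $p=1$ by Jensen's inequality, $\mathbb{E}V^p(Z_n)\le(\mathbb{E}V(Z_n))^p\le CV^p(x)$.
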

\begin{proof}
We rework the proof of Lemma 2(b) in \cite{lamberton2002}. Let us assume directly that $u = \Gamma_k$. To simplify the notations, we define $\widetilde{y}_n := \bar{Y}^x_{\Gamma_n-\Gamma_k,\Gamma_k}$ for $n \ge k$ and $\Delta \widetilde{y}_{n+1} := \widetilde{y}_{n+1} - \widetilde{y}_n$. The Taylor formula applied to $V^p$ between $\widetilde{y}_n$ and $\widetilde{y}_{n+1}$ yields for some $\xi_{n+1} \in (\widetilde{y}_n, \widetilde{y}_{n+1})$ and with $\nabla^2 (V^p) = p(V^{p-1} \nabla^2 V + (p-1)V^{p-2} \nabla V \nabla V^T)$ :
\begin{align*}
V^p(\widetilde{y}_{n+1}) & = V^p(\widetilde{y}_n) + pV^{p-1} \langle \nabla V(\widetilde{y}_n), \Delta\widetilde{y}_{n+1} \rangle + \frac{1}{2}\nabla^2 (V^p)(\xi_{n+1}) \cdot (\Delta \widetilde{y}_{n+1})^{\otimes 2} \\
& = V^p(\widetilde{y}_n) + pV^{p-1} \nabla V(\widetilde{y}_n)^T \cdot \big( -\gamma_{n+1}\sigma \sigma^\top (\widetilde{y}_n)\nabla V(\widetilde{y}_n) + \gamma_{n+1} a^2(\Gamma_n)\Upsilon(\widetilde{y}_n)  \\
& \quad + \gamma_{n+1} \zeta_{n+1}(\widetilde{y}_n) + \sqrt{\gamma_{n+1}} a(\Gamma_n) \sigma (\widetilde{y}_n)U_{n+1}\big) + \frac{1}{2}\nabla^2 (V^p)(\xi_{n+1}) \cdot (\Delta \widetilde{y}_{n+1})^{\otimes 2},
\end{align*}
where $U_{n+1} \sim \mathcal{N}(0,I_d)$. Moreover using \eqref{Eq:eq:V_assumptions}, $\sqrt{V}$ is Lipschitz continuous so
\begin{equation}
\label{eq:potential_bounded_proof}
\textstyle \mathbb{E} \big[\sup_{z \in (\widetilde{y}_n, \widetilde{y}_{n+1})} V^{1/2}(z) | \widetilde{y}_1,\ldots,\widetilde{y}_n \big] \le V^{1/2}(\widetilde{y}_n) + [\sqrt{V}]_{\text{Lip}} \mathbb{E}[|\widetilde{y}_{n+1} - \widetilde{y}_n||\widetilde{y}_1,\ldots,\widetilde{y}_n] \le CV^{1/2}(\widetilde{y}_n),
\end{equation}
and in particular
$$ \mathbb{E}[\|\nabla^2(V^p)(\xi_{n+1})\| |\widetilde{y}_1,\ldots,\widetilde{y}_n] \le C\|\nabla^2(V^p)(\widetilde{y}_n)\| .$$
Moreover using that $\nabla^2 V$ is bounded and that $|\nabla V| \le CV^{1/2}$ we have
$$ \|\nabla^2(V^p)(\widetilde{y}_n)\| \le C\|(V^{p-1} \nabla^2 V + V^{p-2} \nabla V \nabla V^T)(\widetilde{y}_n)\| \le CV^{p-1}(\widetilde{y}_n). $$
Then using the facts that $a$, $\Upsilon$, $\sigma$, $\nabla^2 V$ are bounded and that $\gamma_n^2 = o(\gamma_n)$, that $V$, $\nabla V$ are coercive and $\sigma \sigma^\top \ge \ubar{\sigma}_0^2 I_d$ and \eqref{eq:zeta_assumptions}, there exists $R>0$ and $N \in \mathbb{N}$ such that if $|\widetilde{y}_n| \ge R$ and $n \ge N$ then
\begin{align*}
& \mathbb{E}[ V^p(\widetilde{y}_{n+1}) - V^p(\widetilde{y}_n) | \widetilde{y}_1,\ldots,\widetilde{y}_n ] \\
& \quad \le pV^{p-1} \nabla V(\widetilde{y}_n)^T \cdot \left( -\gamma_{n+1} \ubar{\sigma}_0^2 (\widetilde{y}_n)\nabla V(\widetilde{y}_n) + C\gamma_{n+1} \right) \\
& \quad \quad + C\|\nabla^2 (V^p)(\widetilde{y}_n) \| \cdot \left( \gamma_{n+1}^2\|\sigma\|_\infty^4 |\nabla V(\widetilde{y}_n)|^2 + C\gamma_{n+1}^2 + C\gamma_{n+1}^2 V(x) + C\gamma_{n+1}\mathbb{E}|\mathcal{N}(0,I_d)|^2 \right) \\
& \quad \le C\gamma_{n+1}V^{p-1}(\widetilde{y}_n) \left[ |\nabla V(\widetilde{y}_n)|\big(- |\nabla V(\widetilde{y}_n)| + 1\big) + \gamma_{n+1}(|\nabla V(\widetilde{y}_n)|^2 +1) + 1  \right] \le 0 .
\end{align*}
On the other side, if $|\widetilde{y}_n| \le R$ then
$$ \mathbb{E}[|V^p(\widetilde{y}_{n+1}) - V^p(\widetilde{y}_n)|| \widetilde{y}_1,\ldots,\widetilde{y}_n] \le C \gamma_{n+1} \textstyle \sup_{|x| \le R} V^p(x) .$$
Moreover for $n \in \lbrace k, \ldots, N \rbrace$ using \eqref{eq:potential_bounded_proof} we have
$$ \mathbb{E}[|V^p(\widetilde{y}_{n+1}) - V^p(\widetilde{y}_n)| | \widetilde{y}_1,\ldots,\widetilde{y}_n] \le C V^p(\widetilde{y}_n)$$
so that
$$ \textstyle \sup_{k \le n \le N+1} \mathbb{E}[V^p(\widetilde{y}_n)] \le C^{N-k} V^p(x) .$$
Finally we obtain
$$ \textstyle \sup_{n \ge k} \mathbb{E}[V^p(\widetilde{y}_n)] \le C V^p(x) .$$


\end{proof}

\subsection{Strong and weak error bounds for the Euler-Maruyama scheme}

\begin{lemma}
\label{lemma:3.4.b:Y:bar}
Let $p \ge 1$. There exists $C>0$ such that for every $n$, $k \ge 0$, for every $u \in [\Gamma_k,\Gamma_{k+1})$ and every $t>0$ such that $u \in [T_n,T_{n+1}]$, $t \le \Gamma_{k+1}-u$ and $u+t \in [T_{n},T_{n+1}]$,
$$|| X_t^{x,n} - \bar{Y}_{t,u}^x ||_p \le C \left(V^{1/2}(x)t + \sqrt{t} (a_n - a_{n+1})\right). $$
\end{lemma}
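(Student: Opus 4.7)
My plan is to couple $X^{x,n}$ and $\bar Y^x_{\cdot,u}$ on a common probability space, with both processes starting at $x$ and driven by the same Brownian increments $(W_{u+s}-W_u)_{s\ge 0}$. The crucial simplification is that since $t\le \Gamma_{k+1}-u$, the whole interval $[0,t]$ lies inside a single Euler step, so $\bar Y^x_{\cdot,u}$ evolves with frozen coefficients:
$$\bar Y^x_{s,u} = x + s\bigl(b_{a(\Gamma_k)}(x) + \zeta_{k+1}(x)\bigr) + a(\Gamma_k)\,\sigma(x)\,(W_{u+s}-W_u),\quad s\in[0,t].$$
Subtracting from the integral form of $X^{x,n}$ gives
\begin{align*}
X^{x,n}_t - \bar Y^x_{t,u}
&= \int_0^t \bigl[b_{a_{n+1}}(X^{x,n}_s) - b_{a(\Gamma_k)}(x)\bigr]\,ds \;-\; t\,\zeta_{k+1}(x) \\
&\quad + \int_0^t \bigl[a_{n+1}\,\sigma(X^{x,n}_s) - a(\Gamma_k)\,\sigma(x)\bigr]\,d(W_{u+s}-W_u).
\end{align*}

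For $p\ge 2$ (the case $p\in[1,2)$ then follows from $\|\cdot\|_p\le\|\cdot\|_2$), I would take $L^p$-norms via Minkowski and BDG and bound the three terms separately. In the drift and stochastic-integral terms, I would decompose each integrand as a Lipschitz piece plus a coefficient mismatch,
$$b_{a_{n+1}}(X^{x,n}_s) - b_{a(\Gamma_k)}(x) = \bigl[b_{a_{n+1}}(X^{x,n}_s) - b_{a_{n+1}}(x)\bigr] + \bigl(a_{n+1}^2 - a(\Gamma_k)^2\bigr)\Upsilon(x),$$
and analogously for $\sigma$. The Lipschitz piece is bounded by $[b]_{\text{Lip}}\|X^{x,n}_s - x\|_p\le C V^{1/2}(x)\sqrt s$ using Lemma \ref{lemma:3.4.a}; the coefficient mismatch is controlled by $|a_{n+1}-a(\Gamma_k)|\le C(a_n-a_{n+1})$. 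The noise contribution $\|t\,\zeta_{k+1}(x)\|_p$ is bounded by $C V^{1/2}(x)\,t$ via \eqref{eq:zeta_assumptions}. After integration and using $t\le\Gamma_{k+1}-u\le\bar\gamma$ to absorb $t^{3/2}$ and $t$ into $t$ and $\sqrt t$ respectively, each of the three pieces is dominated by $C\bigl(V^{1/2}(x)\,t + \sqrt t\,(a_n-a_{n+1})\bigr)$, which is the desired bound.

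Unlike Lemma \ref{lemma:3.4.b:Y}, no Gronwall argument is required: only $X^{x,n}$ (and not the difference $X^{x,n}-\bar Y^x_{\cdot,u}$) appears inside the integrands, so Lemma \ref{lemma:3.4.a} closes the estimate directly. The one point deserving attention is the bound $|a_{n+1}-a(\Gamma_k)|\le C(a_n-a_{n+1})$ when the Euler step straddles a plateau boundary, i.e.\ $\Gamma_k<T_n\le u$. Since $\Gamma_k\ge u-\bar\gamma\ge T_n-\bar\gamma$, a mean-value estimate based on $|a'(t)|\asymp 1/(t\log^{3/2}t)$ yields $|a(\Gamma_k)-a_n| = O(\bar\gamma/(n^{1+\beta}\log^{3/2}n)) = o(a_n-a_{n+1})$, so the bound is preserved up to an adjustment of constants. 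The new $V^{1/2}(x)\,t$ term compared to Lemma \ref{lemma:3.4.b:Y} reflects two sources: the Lipschitz corrections of $b$ and $\sigma$ produce a $V^{1/2}(x)\,t^{3/2}$ contribution (absorbed into $V^{1/2}(x)\,t$ since $t\le\bar\gamma$), and the stochastic-gradient noise $\zeta$ contributes $V^{1/2}(x)\,t$ directly.
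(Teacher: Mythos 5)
Your proposal is correct and follows essentially the same route as the paper: exploit that the whole interval $[0,t]$ sits inside one Euler step so the scheme has frozen coefficients, split each integrand into a Lipschitz piece controlled by $\|X^{x,n}_s-x\|_p\le CV^{1/2}(x)\sqrt{s}$ (Lemma \ref{lemma:3.4.a}) plus a coefficient mismatch of order $a_n-a_{n+1}$, bound the $\zeta$ term by \eqref{eq:zeta_assumptions}, and apply Minkowski/BDG, with no Gronwall step needed. Your explicit treatment of the case where the Euler step straddles a plateau boundary is a point the paper leaves implicit, but it does not change the argument.
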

\begin{proof}
As in the proof of Lemma \ref{lemma:3.4.b:Y}, if $p \ge 2$ we have
\begin{align*}
& || X_t^{x,n} - \bar{Y}_{t,u}^x ||_p \le [b]_{\text{Lip}} \int_0^t ||X_s^{x,n} - x||_p ds + ||\Upsilon||_\infty (a_n^2 - a_{n+1}^2)t + a_{n+1}^2 [\Upsilon]_{\text{Lip}}  \int_0^t ||X_s^{x,n} - x||_p ds \\
& \quad \quad \quad + \|\zeta_1(x)\|_p t + C^{BDG} a_{n+1} [\sigma]_{\text{Lip}}\left( \int_0^t ||X_s^{x,n} - x||^2_p ds \right)^{1/2} + ||\sigma||_\infty ||W_1||_p \sqrt{t} (a_n - a_{n+1}).
\end{align*}
Plugging Lemma \ref{lemma:3.4.a} and \eqref{eq:zeta_assumptions} into this inequality yields:
\begin{align*}
|| X_t^{x,n} - \bar{Y}_{t,u}^x ||_p & \le CV^{1/2}(x)t^{3/2} + ||\Upsilon||_\infty (a_n^2 - a_{n+1}^2)t + CV^{1/2}(x)t + C \sqrt{t} (a_n - a_{n+1}),
\end{align*}
which completes the proof for $p \ge 2$. If $p \in [1,2)$, we remark that $\| \cdot \|_p \le \| \cdot \|_2$.
\end{proof}

\begin{proposition}
\label{prop:3.5:Y:bar}
For every $g : \mathbb{R}^d \to \mathbb{R}$ being $\mathcal{C}^3$, for every $n$, $k \ge 0$  and every $u \in [\Gamma_k, \Gamma_{k+1})$ such that $u \in [T_n,T_{n+1}]$, $\gamma \le \Gamma_{k+1}-u$ and $u +\gamma \in [T_n,T_{n+1}]$:
\begin{align*}
| \mathbb{E}\left[g(\bar{Y}^x_{\gamma,u})\right] - \mathbb{E}\left[g(X^{x,n}_{\gamma})\right]| & \le C V^{1/2}(x)\left(V^{1/2}(x) \gamma^2 + \gamma (a_n - a_{n+1})\right) \bar{\Phi}_{g,1}(x) \\
& \quad + C V(x)\left(V^{1/2}(x) \gamma^2 + \gamma^{3/2} (a_n - a_{n+1})\right)\bar{\Phi}_{g,2}(x) ,
\end{align*}
with
\begin{align*}
& \bar{\Phi}_{g,1}(x) = \max\left(|\nabla g(x)|, ||\nabla^2 g(x)||, \left|\left|\sup_{\xi \in (X_\gamma^{x,n}, \bar{Y}_{\gamma,u}^x)} || \nabla^2 g(\xi) || \right|\right|_2 \right), \\
& \bar{\Phi}_{g,2}(x) = \left|\left| \sup_{\xi \in (x, X^{x,n}_\gamma)} ||\nabla^3 g(\xi)|| \right|\right|_4.
\end{align*}
\end{proposition}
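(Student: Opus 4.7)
The argument follows the strategy of Proposition~\ref{prop:3.5:Y} but pushes the Taylor expansion of $g$ one order further, in order to exhibit the weak order two of the Euler--Maruyama scheme. First, apply the second-order Taylor formula to $g$ centred at the random point $X^{x,n}_\gamma$:
\begin{align*}
g(\bar{Y}^x_{\gamma,u}) - g(X^{x,n}_\gamma) &= \nabla g(X^{x,n}_\gamma)\cdot(\bar{Y}^x_{\gamma,u}-X^{x,n}_\gamma) \\
&\quad + \int_0^1 (1-s)\,\nabla^2 g\bigl(s \bar{Y}^x_{\gamma,u} + (1-s)X^{x,n}_\gamma\bigr)\bigl(\bar{Y}^x_{\gamma,u}-X^{x,n}_\gamma\bigr)^{\otimes 2} ds.
\end{align*}
Taking expectations, the integral remainder is controlled by $\|\sup_{\xi \in (X^{x,n}_\gamma,\bar{Y}^x_{\gamma,u})}\|\nabla^2 g(\xi)\|\|_2$ (part of $\bar\Phi_{g,1}$) times $\|\bar{Y}^x_{\gamma,u} - X^{x,n}_\gamma\|_2^2$, which by Lemma~\ref{lemma:3.4.b:Y:bar} is bounded by $C\bigl(V(x)\gamma^2 + \gamma(a_n-a_{n+1})^2\bigr)$; using $|a_n - a_{n+1}|\le 2A$ together with $V(x)\ge V^\star>0$, this yields precisely the two contributions $V(x)\gamma^2\bar\Phi_{g,1}$ and $V^{1/2}(x)\gamma(a_n-a_{n+1})\bar\Phi_{g,1}$ of the stated bound.

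\textbf{Next}, expand $\nabla g(X^{x,n}_\gamma)$ about $x$ using a second-order integral form:
\begin{align*}
\nabla g(X^{x,n}_\gamma) = \nabla g(x) + \nabla^2 g(x)(X^{x,n}_\gamma-x) + \int_0^1 (1-s)\nabla^3 g\bigl(x+s(X^{x,n}_\gamma-x)\bigr)(X^{x,n}_\gamma-x)^{\otimes 2} ds,
\end{align*}
and multiply by $\bar{Y}^x_{\gamma,u}-X^{x,n}_\gamma$. The third-order remainder is estimated by the generalised H\"older inequality ($1/4 + 2/4 + 1/4 = 1$) combined with Lemma~\ref{lemma:3.4.a} ($\|X^{x,n}_\gamma-x\|_4\le CV^{1/2}(x)\sqrt{\gamma}$) and Lemma~\ref{lemma:3.4.b:Y:bar} ($\|\bar{Y}^x_{\gamma,u}-X^{x,n}_\gamma\|_4 \le C(V^{1/2}(x)\gamma + \sqrt{\gamma}(a_n-a_{n+1}))$), producing exactly the second line of the bound, namely $CV(x)\bigl(V^{1/2}(x)\gamma^2 + \gamma^{3/2}(a_n-a_{n+1})\bigr)\bar\Phi_{g,2}(x)$.

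\textbf{Third step: the two leading terms.} There remain $\nabla g(x)\cdot\mathbb{E}[\bar{Y}^x_{\gamma,u}-X^{x,n}_\gamma]$ and $\mathbb{E}\bigl[\nabla^2 g(x)(X^{x,n}_\gamma-x)\otimes(\bar{Y}^x_{\gamma,u}-X^{x,n}_\gamma)\bigr]$, which must be estimated by direct moment computations. The explicit affine form of the Euler step combined with $\mathbb{E}[\zeta_{k+1}(x)]=0$ and the independence of $\zeta_{k+1}$ from $W$ yields $\mathbb{E}[\bar{Y}^x_{\gamma,u}-x]=\gamma b_{a(\Gamma_k)}(x)$ and $\mathbb{E}[(\bar{Y}^x_{\gamma,u}-x)^{\otimes 2}]=\gamma a(\Gamma_k)^2 \sigma\sigma^\top(x)+O(\gamma^2 V(x))$. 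Applying It\^o's formula to $X^{x,n}$ once, then to $b_{a_{n+1}}(X^{x,n}_\cdot)$ (legitimate under the $\mathcal{C}^3$ hypotheses on $V$ and $\sigma\sigma^\top$ invoked in Theorem~\ref{thm:main}(b)) gives $\mathbb{E}[X^{x,n}_\gamma-x]=\gamma b_{a_{n+1}}(x)+O(\gamma^2 V(x))$ and a matching second-moment expansion. Since $b_a=b_0+a^2\Upsilon$, the drift mismatch satisfies $|b_{a_{n+1}}(x)-b_{a(\Gamma_k)}(x)|\le C(a_n-a_{n+1})$, and likewise $|a_{n+1}^2-a(\Gamma_k)^2|\le C(a_n-a_{n+1})$ for the diffusion part; collecting these contributions yields the remaining first-line terms of the stated bound.

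\textbf{Main obstacle.} The real difficulty lies in the third step, namely upgrading the naive strong-error estimate $|\mathbb{E}[\bar{Y}-X]|=O(\gamma^{3/2})$ to the weak-error estimate $O(\gamma^2 V(x))$. This upgrade requires iterating It\^o's formula on $b_{a_{n+1}}$, which in turn uses the regularity $\|\nabla^3 V\|\le CV^{1/2}$ and $\|\nabla^3(\sigma\sigma^\top)\|\le CV^{1/2}$ already invoked in Theorem~\ref{thm:main}(b). Tracking the precise powers of $V(x)$ in every estimate and verifying that all constants remain uniform in the ellipticity parameter $a_{n+1}\in(0,A]$, as in the refinements of \cite{pages2020} performed in Propositions~\ref{prop:3.5:Y} and~\ref{prop:3.6:Y}, is the bookkeeping that makes the proof technical.
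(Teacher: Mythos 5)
Your decomposition is exactly the paper's: performing the second-order Taylor expansion of $g$ around $X^{x,n}_\gamma$ and then expanding $\nabla g(X^{x,n}_\gamma)$ around $x$ reproduces, term for term, the four quantities $A_1,\dots,A_4$ of the paper's proof, and your treatment of the two remainders ($A_4$ via Cauchy--Schwarz and the strong error of Lemma \ref{lemma:3.4.b:Y:bar}, absorbing $\gamma(a_n-a_{n+1})^2$ using $a_n-a_{n+1}\le C$ and $V\ge V^\star>0$; $A_3$ via the three-fold H\"older inequality with Lemmas \ref{lemma:3.4.a} and \ref{lemma:3.4.b:Y:bar}) and of the first-order term $A_1$ (It\^o iterated on $b_{a_{n+1}}$, using the $\mathcal{C}^3$ hypotheses to get $\|\nabla^2 b_a\|\le CV^{1/2}$, plus the $a^2\Upsilon$ mismatch) coincides with the paper's.

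The one step that does not go through as literally stated is the term $\mathbb{E}\bigl[\nabla^2 g(x)(X^{x,n}_\gamma - x)\otimes(\bar Y^x_{\gamma,u}-X^{x,n}_\gamma)\bigr]$. Knowing the marginal expansions $\mathbb{E}[(\bar Y^x_{\gamma,u}-x)^{\otimes 2}]$ and $\mathbb{E}[(X^{x,n}_\gamma-x)^{\otimes 2}]$ (plus the first moments) does not determine this quantity: writing $(X-x)\otimes(\bar Y - X) = (X-x)\otimes(\bar Y-x)-(X-x)^{\otimes 2}$, the cross term $\mathbb{E}[(X-x)\otimes(\bar Y - x)]$ depends on the joint law, and the cancellation of the $O(\gamma)$ Brownian contributions happens only because $X^{x,n}$ and $\bar Y^x_{\cdot,u}$ are driven by the \emph{same} Brownian motion; with an independent coupling the term would be of order $\gamma$, not $\gamma^2+\gamma(a_n-a_{n+1})$. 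The paper makes this explicit by splitting $(X-x)\otimes(X-\bar Y)$ into $(X-\bar Y)^{\otimes 2}$ plus $(\bar Y-x)\otimes(X-\bar Y)$, using $\mathbb{E}[\zeta_{k+1}(x)]=0$ and its independence from $W$, and then applying the It\^o isometry to $\mathbb{E}\bigl[(a(u)\sigma(x)W_\gamma)_i\bigl(\int_0^\gamma(a_{n+1}\sigma(X^{x,n}_s)-a(u)\sigma(x))dW_s\bigr)_j\bigr]$ together with a further It\^o expansion of $\sigma_{jk}(X^{x,n}_s)-\sigma_{jk}(x)$ (using the boundedness of $\nabla\sigma$ and $\nabla^2\sigma$) to reach order $\gamma^2 V^{1/2}(x)+\gamma(a_n-a_{n+1})$. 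You need this cross-covariance computation, or an equivalent one exploiting the synchronous coupling, to close the second leading term; the rest of your argument is sound.
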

The proof is given in the Supplementary Material.

\begin{proposition}
\label{prop:3.6:Y:bar}
Let $T >0$. There exists $C > 0$ such that for every Lipschitz continuous function $f$ and every $t \in (0,T]$, for all $n$, $k \ge 0$, for all $u \in [\Gamma_k, \Gamma_{k+1})$, for all $\gamma$ such that $\Gamma_k \in [T_n,T_{n+1}]$, $\gamma \le \Gamma_{k+1}-u$ and $u+t+\gamma \in [T_n,T_{n+1}]$,
\begin{align*}
& \left| \mathbb{E}\left[P_t^{X,n} f(\bar{Y}_{\gamma,u}^x)\right] - \mathbb{E}\left[P_t^{X,n} f(X_{\gamma}^{x,n})\right] \right| \\
& \quad \le C[f]_{\textup{Lip}} V^{2}(x) \cdot \left(a_{n+1}^{-2}t^{-1/2}\left(\gamma^2 + (a_n-a_{n+1}) \gamma\right) + a_{n+1}^{-3}t^{-1}\left(\gamma^2 + \gamma^{3/2}(a_n-a_{n+1})\right) \right).
\end{align*}
\end{proposition}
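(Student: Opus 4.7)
The plan is to apply Proposition \ref{prop:3.5:Y:bar} to the test function $g_t := P_t^{X,n} f$, mirroring exactly the strategy used in the proof of Proposition \ref{prop:3.6:Y}. This immediately reduces the problem to controlling the quantities $\bar{\Phi}_{g_t,1}(x)$ and $\bar{\Phi}_{g_t,2}(x)$, which involve the first three derivatives of $P_t^{X,n} f$ evaluated at $x$ and along the random segments $(x,X^{x,n}_\gamma)$ and $(X^{x,n}_\gamma, \bar{Y}^x_{\gamma,u})$.

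The core step is to refine the derivative estimates of \cite[Proposition 3.2(b)]{pages2020} while carefully tracking the dependence on the ellipticity parameter $a_{n+1}$ (the effective diffusion on the plateau containing $\Gamma_k$ is $a_{n+1}\sigma$, so $(a_{n+1}\sigma)(a_{n+1}\sigma)^\top \ge a_{n+1}^2 \ubar{\sigma}_0^2 I_d$). I would establish that for $t \in (0,T]$ and Lipschitz $f$,
\begin{align*}
|\nabla (P_t^{X,n} f)(x)| & \le C[f]_{\text{Lip}}, \\
\|\nabla^2 (P_t^{X,n} f)(x)\| & \le C[f]_{\text{Lip}} \, a_{n+1}^{-2} \, t^{-1/2} \, V^{1/2}(x), \\
\|\nabla^3 (P_t^{X,n} f)(x)\| & \le C[f]_{\text{Lip}} \, a_{n+1}^{-3} \, t^{-1} \, V^{1/2}(x).
\end{align*}
These are obtained by differentiating the tangent process of $X^{x,n}$ up to order three, applying BDG together with the ellipticity lower bound which yields the factor $a_{n+1}^{-2}$ per extra space derivative, and performing a Bismut-type integration by parts to transfer a half derivative from $f$ onto the Gaussian kernel (producing the $t^{-1/2}$ factor). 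The $\mathcal{C}^3$ regularity of $V$ and $\sigma$ and the growth controls $\|\nabla^3 V\| \le CV^{1/2}$ and $\|\nabla^3(\sigma \sigma^\top)\| \le CV^{1/2}$ from Theorem \ref{thm:main}.(b) are exactly what is required to propagate the $V^{1/2}$ factor in the state variable through the tangent and second tangent flows.

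Once these pointwise bounds are in hand, passing to the suprema entering $\bar{\Phi}_{g_t,1}$ and $\bar{\Phi}_{g_t,2}$ is routine: since $|\nabla V| \le CV^{1/2}$, the function $V^{1/2}$ is Lipschitz, and by Lemma \ref{lemma:3.4.a} and Lemma \ref{lemma:3.4.b:Y:bar} the increments $X^{x,n}_\gamma - x$ and $\bar{Y}^x_{\gamma,u} - X^{x,n}_\gamma$ have small $L^p$ norms. Consequently, for $p \in \{2,4\}$ the $L^p$-norm of the supremum of $V^{1/2}$ along either segment is bounded by $CV^{1/2}(x)$, which gives
$$ \bar{\Phi}_{g_t,1}(x) \le C[f]_{\text{Lip}} a_{n+1}^{-2} t^{-1/2} V^{1/2}(x), \quad \bar{\Phi}_{g_t,2}(x) \le C[f]_{\text{Lip}} a_{n+1}^{-3} t^{-1} V^{1/2}(x). $$
Substituting into Proposition \ref{prop:3.5:Y:bar} and using $V \ge V^\star > 0$ to absorb the resulting $V^{3/2}$ factors into $V^{2}(x)$ produces exactly the announced bound.

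The main obstacle is the third-derivative estimate with its $a_{n+1}^{-3} t^{-1}$ prefactor and only a $V^{1/2}$ factor in the state variable. The first- and second-order bounds essentially reproduce those already exploited in the proof of Proposition \ref{prop:3.6:Y}, but obtaining the third-order bound requires a careful second-order expansion of the tangent flow combined with the integration-by-parts step mentioned above; tracking the ellipticity dependence through this expansion, and controlling the resulting stochastic integrals without picking up more than $V^{1/2}(x)$, is the delicate part of the argument and is precisely what dictates the extra regularity and growth hypotheses imposed on $V$ and $\sigma$ in Theorem \ref{thm:main}.(b).
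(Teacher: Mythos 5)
Your proposal is correct and follows essentially the same route as the paper: the paper's proof of this proposition is literally "the same as for Proposition \ref{prop:3.6:Y}", i.e.\ apply Proposition \ref{prop:3.5:Y:bar} to $g_t = P_t^{X,n}f$ and control $\bar{\Phi}_{g_t,1}$, $\bar{\Phi}_{g_t,2}$ via the derivative estimates of \cite[Proposition 3.2(b)]{pages2020} refined to track the ellipticity parameter $a_{n+1}$ (the exponent $-3$ for the third derivative being exactly what the paper invokes in Proposition \ref{prop:3.6:X:bar}), together with Lemmas \ref{lemma:3.4.a} and \ref{lemma:3.4.b:Y:bar} and the Lipschitz continuity of $V^{1/2}$ to pass to the suprema along the segments. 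Your bookkeeping of the powers of $V$, $a_{n+1}$ and $t$ reproduces the stated bound.
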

\begin{proof}
The proof is the same as for Proposition \ref{prop:3.6:Y}.
\end{proof}

\subsection{Proof of Theorem \ref{thm:main}.(b)}
\label{subsec:proof_Y_bar}

More precisely, we prove that for all $\beta>0$, if
\begin{equation}
\label{eq:hyp_A_Y_bar}
A > \max\left(\sqrt{(\beta+1)(2C_1+C_2)}, \sqrt{(1+\beta^{-1})C_2} \right) ,
\end{equation}
then
$$ \mathcal{W}_1([\bar{Y}^{x_0}_t], \nu_{a(t)}) \le \frac{C\max\left(1+|x_0|,V^2(x_0)\right)}{t^{(1+\beta)^{-1}-(2C_1+C_2)/A^2}} .$$

\begin{proof}
We apply the same \textit{domino strategy} as in Section \ref{subsec:proof_Y}. Let $n \ge 0$ and let $f : \mathbb{R}^d \to \mathbb{R}$ be Lipschitz continuous.
Let us denote
$$\gamma^{\text{init}} := \Gamma_{N(T_n)+1}-T_n \le \gamma_{N(T_n)+1} \quad \text{ and } \quad \gamma^{\text{end}} := T_{n+1}-\Gamma_{N(T_{n+1})} \le \gamma_{N(T_{n+1})+1}. $$
For $x \in \mathbb{R}^d$ we write:
\begin{align*}
& \left| \mathbb{E}f(X_{T_{n+1}-T_n}^{x,n}) - \mathbb{E}f(\bar{Y}_{T_{n+1}-T_n,T_n}^{x})\right| \le \left|(P^{\bar{Y}}_{\gamma^{\text{init}},T_n} - P^{X,n}_{\gamma^{\text{init}}}) \circ P^{X,n}_{T_{n+1}-\Gamma_{N(T_n)+1}} f(x)\right| \\
& + \sum_{k=N(T_n)+2}^{N(T_{n+1}-T)} \left| P^{\bar{Y}}_{\gamma^{\text{init}},T_n} \circ P^{\bar{Y}}_{\gamma_{N(T_n)+2},\Gamma_{N(T_n)+1}} \circ \cdots \circ P^{\bar{Y}}_{\gamma_{k-1},\Gamma_{k-2}} \circ (P^{\bar{Y}}_{\gamma_{k},\Gamma_{k-1}} - P^{X,n}_{\gamma_k}) \circ P^{X,n}_{T_{n+1} - \Gamma_k} f(x) \right| \\
& + \sum_{k=N(T_{n+1}-T)+1}^{N(T_{n+1})-1} \left| P^{\bar{Y}}_{\gamma^{\text{init}},T_n} \circ P^{\bar{Y}}_{\gamma_{N(T_n)+2},\Gamma_{N(T_n)+1}} \circ \cdots \circ P^{\bar{Y}}_{\gamma_{k-1},\Gamma_{k-2}} \circ (P^{\bar{Y}}_{\gamma_{k},\Gamma_{k-1}} - P^{X,n}_{\gamma_k}) \circ P^{X,n}_{T_{n+1} - \Gamma_k} f(x) \right| \\
& {+} \left| P^{\bar{Y}}_{\gamma^{\text{init}},T_n} {\circ} P^{\bar{Y}}_{\gamma_{N(T_n)+2},\Gamma_{N(T_n)+1}} {\circ} \cdots {\circ} P^{\bar{Y}}_{\gamma_{N(T_{n+1})-1},\Gamma_{N(T_{n+1})-2}} {\circ} (P^{\bar{Y}}_{\gamma^{\text{end}}+\gamma_{N(T_{n+1})},\Gamma_{N(T_{n+1})-1}} {-} P^{X,n}_{\gamma^{\text{end}}+\gamma_{N(T_{n+1})}}) f(x)\right| \\
& =: (c^{\text{init}}) + (a) + (b) + (c^{\text{end}}).
\end{align*}

\medskip

$\bullet$ \textbf{Term $(a)$:} we have
\begin{align*}
& |(P^{\bar{Y}}_{\gamma_k,\Gamma_{k-1}} - P^{X,n}_{\gamma_k}) \circ P^{X,n}_{T_{n+1} - \Gamma_k} f(x)| \\
& = |P^{\bar{Y}}_{\gamma_k,\Gamma_{k-1}} \circ P^{X,n}_{T/2} \circ P^{X,n}_{T_{n+1}-\Gamma_k - T/2} f(x) - P^{X,n}_{\gamma_k,n} \circ P^{X,n}_{T/2} \circ P^{X,n}_{T_{n+1}-\Gamma_k - T/2} f(x)| \\
& \le | \mathbb{E} P^{X,n}_{T_{n+1}-\Gamma_k-T/2}(\Xi_k^x) - \mathbb{E}P^{X,n}_{T_{n+1}-\Gamma_k-T/2}(\bar{\Xi}_k^x)| \le Ce^{Ca_{n+1}^{-2}} e^{-\rho_{n+1}(T_{n+1}-\Gamma_k-T/2)}[f]_{\text{Lip}}\mathbb{E}|\Xi^x_k - \bar{\Xi}^x_k|,
\end{align*}
where $\Xi^x_k$ and $\bar{\Xi}^x_k$ are any random vectors with laws $\left[X_{T/2}^{X^{x,n}_{\gamma_k},n}\right]$ and $\left[X_{T/2}^{\bar{Y}^x_{\gamma_k,\Gamma_{k-1}},n}\right]$ respectively and where we used Theorem \ref{thm:confluence} to get the last inequality. Thus, it follows from the definition of the Wasserstein distance that
$$ |(P^{\bar{Y}}_{\gamma_k,\Gamma_{k-1}} - P^{X,n}_{\gamma_k}) \circ P^{X,n}_{T_{n+1} - \Gamma_k} f(x)| \le Ce^{Ca_{n+1}^{-2}}e^{-\rho_{n+1}(T_{n+1}-\Gamma_k)}[f]_{\text{Lip}} \mathcal{W}_1\left(X_{T/2}^{X^{x,n}_{\gamma_k},n}, X_{T/2}^{\bar{Y}^x_{\gamma_k,\Gamma_{k-1}},n}\right) .$$
On the other hand, the Kantorovich-Rubinstein representation of the $L^1$-Wasserstein distance (see \cite[Equation (6.3)]{villani2009}) reads
\begin{align*}
\mathcal{W}_1\left(X_{T/2}^{X^{x,n}_{\gamma_k},n}, X_{T/2}^{\bar{Y}^x_{\gamma_k,\Gamma_{k-1}},n}\right) & = \sup_{[g]_{\text{Lip}}=1} \mathbb{E}\left[ g\left(X_{T/2}^{X^{x,n}_{\gamma_k},n}\right) - g\left(X_{T/2}^{\bar{Y}^x_{\gamma_k,\Gamma_{k-1}},n}\right) \right] \\
& = \sup_{[g]_{\text{Lip}}=1} \mathbb{E}\left[ P^{X,n}_{T/2}g(X^{x,n}_{\gamma_k}) - P^{X,n}_{T/2}g(\bar{Y}^x_{\gamma_k,\Gamma_{k-1}}) \right].
\end{align*}
It follows from Proposition \ref{prop:3.6:Y:bar} and using $[g]_{\text{Lip}}=1$ that
$$ \mathbb{E}\left[ P^{X,n}_{T/2}g(X^{x,n}_{\gamma_k}) - P^{X,n}_{T/2}g(\bar{Y}^x_{\gamma_k,\Gamma_{k-1}}) \right] \le Ca_{n+1}^{-3}\left(\gamma_k^2 +(a_n-a_{n+1}) \gamma_k \right)V^2(x), $$
so that
$$ |(P^{\bar{Y}}_{\gamma_k,\Gamma_{k-1}} - P^{X,n}_{\gamma_k}) \circ P^{X,n}_{T_{n+1} - \Gamma_k} f(x)| \le Ce^{C_1 a_{n+1}^{-2}}e^{-\rho_{n+1}(T_{n+1}-\Gamma_k)}[f]_{\text{Lip}} a_{n+1}^{-3}\left(\gamma_k^2 + (a_n-a_{n+1}) \gamma_k\right) V^2(x) .$$
Finally, integrating with respect to $P^{\bar{Y}}_{\gamma^{\text{init}},T_n} \circ P^{\bar{Y}}_{\gamma_{N(T_n)+2},\Gamma_{N(T_n)+1}} \circ \cdots \circ P^{\bar{Y}}_{\gamma_{k-1},\Gamma_{k-2}}$ yields:
\begin{align*}
& \left| P^{\bar{Y}}_{\gamma^{\text{init}},T_n} \circ P^{\bar{Y}}_{\gamma_{N(T_n)+2},\Gamma_{N(T_n)+1}} \circ \cdots \circ P^{\bar{Y}}_{\gamma_{k-1},\Gamma_{k-2}} \circ (P^{\bar{Y}}_{\gamma_k,\Gamma_{k-1}} - P^{X,n}_{\gamma_k}) \circ P^{X,n}_{T_{n+1} - \Gamma_k} f(x) \right| \\
& \quad \le Ce^{C_1 a_{n+1}^{-2}}e^{-\rho_{n+1}(T_{n+1}-\Gamma_k)}[f]_{\text{Lip}} a_{n+1}^{-3}\left(\gamma_k^2 + (a_n-a_{n+1}) \gamma_k\right) \left(\sup_{\ell \ge N(T_n)+1} \mathbb{E} V^2(\bar{Y}_{\gamma^{\text{init}} + \Gamma_\ell - \Gamma_{N(T_n)+1},T_n}^x)\right) \\
& \quad \le Ce^{C_1 a_{n+1}^{-2}}e^{-\rho_{n+1}(T_{n+1}-\Gamma_k)}[f]_{\text{Lip}} a_{n+1}^{-3}\left(\gamma_k^2 + (a_n-a_{n+1}) \gamma_k \right) V^2(x),
\end{align*}
where we used Lemma \ref{lemma:D.1a}. Now, summing up over $k$ yields:
\begin{align*}
(a) & \le Ca_{n+1}^{-3}e^{C_1 a_{n+1}^{-2}}e^{-\rho_{n+1} T_{n+1}}[f]_{\text{Lip}} V^2(x) \sum_{k=N(T_n)+2}^{N(T_{n+1}-T)} ((a_n-a_{n-1}) + \gamma_k) \gamma_k e^{\rho_{n+1} \Gamma_k} \\
& \le Ca_{n+1}^{-3}e^{C_1 a_{n+1}^{-2}}e^{-\rho_{n+1} T_{n+1}}[f]_{\text{Lip}} ((a_n-a_{n-1}) + \gamma_{N(T_n)}) V^2(x) \sum_{k=N(T_n)+2}^{N(T_{n+1}-T)} \gamma_k e^{\rho_{n+1} \Gamma_{k-1}} \\
& \le Ca_{n+1}^{-3}e^{C_1 a_{n+1}^{-2}}e^{-\rho_{n+1} T_{n+1}}[f]_{\text{Lip}}((a_n-a_{n-1}) + \gamma_{N(T_n)})V^2(x) \int_{T_n}^{T_{n+1}-T} e^{\rho_{n+1} u} du \\
& \le Ca_{n+1}^{-3}e^{C_1 a_{n+1}^{-2}}[f]_{\text{Lip}}((a_n-a_{n-1}) + \gamma_{N(T_n)})V^2(x) \rho_{n+1}^{-1} \\
& \le Ca_{n+1}^{-3}e^{C_1 a_{n+1}^{-2}}[f]_{\text{Lip}}(a_n-a_{n-1})V^2(x) \rho_{n+1}^{-1},
\end{align*}
where we used that $(e^{\rho_{n+1} \gamma_k})_{n,k \ge 0}$ is bounded and Lemma \ref{lemma:app:gamma:1} in the last inequality.
We obtain likewise
$$ (c^{\text{init}}) \le Ce^{C_1 a_{n+1}^{-2}}e^{-\rho_{n+1}(T_{n+1}-T_n)}[f]_{\text{Lip}}a_{n+1}^{-3}(a_n-a_{n+1})\gamma_{N(T_n)+1} V^2(x) .$$

\medskip

$\bullet$ \textbf{Term $(b)$:} Applying Proposition \ref{prop:3.6:Y:bar} yields:
\begin{align*}
(b) & \le C a_{n+1}^{-3} \left(\gamma_{N(T_{n+1}-T)} + \sqrt{\gamma_{N(T_{n+1}-T)}}(a_n-a_{n+1})\right) [f]_{\text{Lip}} V^2(x) \sum_{k=N(T_{n+1}-T)+1}^{N(T_{n+1})-1} \frac{\gamma_k}{T_{n+1}-\Gamma_k} \\
& \quad + C a_{n+1}^{-2} \left(\gamma_{N(T_{n+1}-T)} + (a_n-a_{n+1})\right) [f]_{\text{Lip}} V^2(x) \sum_{k=N(T_{n+1}-T)+1}^{N(T_{n+1})-1} \frac{\gamma_k}{\sqrt{T_{n+1}-\Gamma_k}} \\
& \le C a_{n+1}^{-3} \left(\gamma_{N(T_{n+1}-T)} + \sqrt{\gamma_{N(T_{n+1}-T)}}(a_n-a_{n+1})\right) [f]_{\text{Lip}} V^2(x) \int_{T_{n+1}-T}^{T_{n+1}-\gamma_{N(T_{n+1})}} \frac{1}{T_{n+1}-u} du \\
& \quad + C a_{n+1}^{-2} \left(\gamma_{N(T_{n+1}-T)} + (a_n-a_{n+1})\right) [f]_{\text{Lip}} V^2(x) \int_{T_{n+1}-T}^{T_{n+1}-\gamma_{N(T_{n+1})}} \frac{1}{\sqrt{T_{n+1}-u}} du \\
& \le C a_{n+1}^{-3} \left(\gamma_{N(T_{n+1}-T)} + \sqrt{\gamma_{N(T_{n+1}-T)}}(a_n-a_{n+1})\right) [f]_{\text{Lip}} V^2(x) \log(1/\gamma_{N(T_{n+1})}) \\
& \quad + C a_{n+1}^{-2} (a_n-a_{n+1}) [f]_{\text{Lip}} V^2(x).
\end{align*}
Using Lemma \ref{lemma:app:gamma:2} in Appendix, $\sqrt{\gamma_{N(T_{n+1}-T)}} \log(1/\gamma_{N(T_{n+1})}) \le C \sqrt{\gamma_{N(T_{n+1})}} \log(1/\gamma_{N(T_{n+1})}) \to 0$ and using Lemma \ref{lemma:app:gamma:1} we also have
$$ \gamma_{N(T_{n+1}-T)} \log(1/\gamma_{N(T_{n+1})}) \le C\gamma_{N(T_{n+1})}^{1-\varepsilon} = o\left( n^{-1-\beta'} \right) = o(a_n - a_{n+1})$$
where $\beta'>0$ for small enough $\varepsilon$. So that
$$ (b) \le C a_{n+1}^{-3} (a_n-a_{n+1}) [f]_{\text{Lip}} V^2(x) .$$

\medskip

$\bullet$ \textbf{Term $(c^{\text{end}})$:} Using Lemma \ref{lemma:3.4.b:Y:bar} and $\gamma^{\text{end}} \le \gamma_{N(T_{n+1})+1} \le \gamma_{N(T_n)}$ yields:
\begin{align*}
|(P^{\bar{Y}}_{\gamma^{\text{end}}+\gamma_{N(T_{n+1})},\Gamma_{N(T_{n+1})-1}} - P^{X,n}_{\gamma^{\text{end}}+\gamma_{N(T_{n+1})}}) f(x)|
\le C[f]_{\text{Lip}} \left(\sqrt{\gamma_{N(T_n)}}(a_n-a_{n+1})+\gamma_{N(T_n)}\right) V^{1/2}(x).
\end{align*}
Then we integrate with respect to $P^{\bar{Y}}_{\gamma^{\text{init}},T_n} \circ P^{\bar{Y}}_{\gamma_{N(T_n)+2},\Gamma_{N(T_n)+1}} \circ \cdots \circ P^{\bar{Y}}_{\gamma_{k-1},\Gamma_{k-2}}$ and apply Lemma \ref{lemma:D.1a}.

\medskip

$\bullet$ So we have finally that $| \mathbb{E}f(X_{T_{n+1}-T_n}^{x,n}) - \mathbb{E}f(\bar{Y}_{T_{n+1}-T_n,T_n}^{x})|$ is bounded by
$$ C a_{n+1}^{-3} [f]_{\text{Lip}}(a_n-a_{n+1}) e^{C_1 a_{n+1}^{-2}}\rho_{n+1}^{-1} V^2(x) ,$$
which implies that, for every $x \in \mathbb{R}^d$,
$$ \mathcal{W}_1([X_{T_{n+1}-T_n}^{x,n}], [\bar{Y}_{T_{n+1}-T_n,T_n}^x]) \le C a_{n+1}^{-3} (a_n-a_{n+1}) e^{C_1 a_{n+1}^{-2}}\rho_{n+1}^{-1} V^2(x) .$$
We integrate this inequality with respect to the laws of $X^{x_0}_{T_n}$ and $\bar{Y}_{T_n}^{x_0}$ and obtain, temporarily setting $x_n := X^{x_0}_{T_n}$ and $\bar{y}_n := \bar{Y}_{T_n}^{x_0}$,
\begin{align*}
\mathcal{W}_1([X^{x_0}_{T_{n+1}}] & , [\bar{Y}^{x_0}_{T_{n+1}}]) = \mathcal{W}_1([X_{T_{n+1}-T_n}^{x_n,n}], [\bar{Y}_{T_{n+1}-T_n,T_n}^{\bar{y}_n}]) \\
& \le \mathcal{W}_1([X_{T_{n+1}-T_n}^{x_n,n}], [X_{T_{n+1}-T_n}^{\bar{y}_n,n}]) + \mathcal{W}_1([X_{T_{n+1}-T_n}^{\bar{y}_n,n}], [\bar{Y}_{T_{n+1}-T_n,T_n}^{\bar{y}_n}]) \\
& \le Ce^{C_1 a_{n+1}^{-2}} e^{-\rho_{n+1} (T_{n+1}-T_n)} \mathcal{W}_1([X^{x_0}_{T_n}], [\bar{Y}_{T_n}^{x_0}]) + Ca_{n+1}^{-3} (a_n-a_{n+1}) e^{C_1 a_{n+1}^{-2}}\rho_{n+1}^{-1} \mathbb{E} V^2(\bar{Y}_{T_n}^{x_0}) \\
& \le Ce^{C_1 a_{n+1}^{-2}} e^{-\rho_{n+1} (T_{n+1}-T_n)} \mathcal{W}_1([X^{x_0}_{T_n}], [\bar{Y}_{T_n}^{x_0}]) + Ca_{n+1}^{-3} (a_n-a_{n+1}) e^{C_1 a_{n+1}^{-2}}\rho_{n+1}^{-1}V^2(x_0) \\
& =: \mu_{n+1} \mathcal{W}_1([X^{x_0}_{T_n}], [\bar{Y}_{T_n}^{x_0}]) + v_{n+1}V^2(x_0),
\end{align*}
where $\mu_n$ is defined in \eqref{eq:def_mu} and where we used again Lemma \ref{lemma:D.1a}.
We use Lemma \ref{lemma:app:a_n_diff} to bound $(a_n - a_{n+1})$ and owing to \eqref{eq:hyp_A_Y_bar} we have $v_n \to 0$, so is bounded.
We iterate this inequality and obtain
\begin{align*}
\mathcal{W}_1([X_{T_{n+1}}^{x_0}],[\bar{Y}_{T_{n+1}}^{x_0}]) & \le CV^2(x_0) \left(v_{n+1} + \mu_{n+1} v_n + \mu_{n+1} \mu_n v_{n-1} + \cdots + \mu_{n+1} \cdots \mu_2 v_1 \right) \\
& \le CV^2(x_0) \left( v_{n+1} + Cn\mu_{n+1} \right).
\end{align*}
But following \eqref{eq:def_mu} we have $n \mu_n = O(v_n)$ so that
$$ \mathcal{W}_1([X_{T_{n+1}}^{x_0}],[\bar{Y}_{T_{n+1}}^{x_0}]) \le CV^2(x_0) v_{n+1} \le \frac{CV^2(x_0)}{(n+1)^{1-(\beta+1)(C_1+C_2)/A^2}} .$$
Moreover, owing to \eqref{eq:hyp_A_Y_bar} and combining with Theorem \ref{thm:conv_X} we get
$$ \mathcal{W}_1([\bar{Y}^{x_0}_{T_n}],\nu_{a_n}) \le \mathcal{W}_1([\bar{Y}^{x_0}_{T_n}],[X^{x_0}_{T_n}]) + \mathcal{W}_1([X^{x_0}_{T_n}],\nu_{a_n}) \le \frac{C\max(1+|x_0|,V^2(x_0))}{n^{1-(\beta+1)(C_1+C_2)/A^2}} $$
and
$$  \mathcal{W}_1([\bar{Y}^{x_0}_{T_n}],\nu^\star) \le \mathcal{W}_1([\bar{Y}^{x_0}_{T_n}],[X^{x_0}_{T_n}]) + \mathcal{W}_1([X^{x_0}_{T_n}],\nu^\star) \le Ca_n \max(1+|x_0|,V^2(x_0)) .$$

\medskip

Finally, to prove that $\mathcal{W}_1([\bar{Y}^{x_0}_t],\nu^\star) \rightarrow 0$ as $t \to \infty$, we conclude as in the end of Section \ref{subsec:proof_Y}.

\end{proof}

\section{Convergence of the Euler-Maruyama scheme with plateau}
\label{sec:bar-X}

In this section, we consider the Euler-Maruyama scheme for $(X_t)$, that is
\begin{align*}
\bar{X}_0^{x_0} = x_0, \quad \bar{X}_{\Gamma_{k+1}}^{x_0} = \bar{X}_{\Gamma_k} + \gamma_{k+1} b_{a_{n+1}}(\bar{X}_{\Gamma_k}) + a_{n+1} \sigma(\bar{X}_{\Gamma_k})(W_{\Gamma_{k+1}} - W_{\Gamma_k})
\end{align*}
for $k \in \lbrace N(T_n), \ldots, N(T_{n+1}) - 1 \rbrace$.
We also define as in Section \ref{sec:bar-Y} the genuine time-continuous scheme and the Euler-Maruyama scheme for $(X^{x,n}_t)_t$ so that $\bar{X}^{x_0,0} = \bar{X}^x$.

Although we already proved the convergence of the Euler-Maruyama scheme for $(Y_t)$, we shall also prove the convergence of the present scheme, since this algorithm is also used by practitioners within the framework of batch methods.

\begin{theorem}
\label{thm:main:3}
Assume \eqref{Eq:eq:min_V}, \eqref{Eq:eq:V_assumptions}, \eqref{Eq:eq:sigma_assumptions}, \eqref{eq:ellipticity} and \eqref{Eq:eq:V_confluence}. Assume furthermore \eqref{Eq:eq:gamma_assumptions} and \eqref{Eq:eq:gamma_assumptions_2}, that $V$ is $\mathcal{C}^3$ with $\|\nabla^3 V\| \le CV^{1/2}$ and that $\sigma$ is $\mathcal{C}^3$ with $\|\nabla^3(\sigma \sigma^\top)\| \le CV^{1/2}$. Then for large enough $A>0$ and for every $x_0 \in \mathbb{R}^d$,
$$ \mathcal{W}_1(\bar{X}^{x_0}_t, \nu^\star) \underset{t \rightarrow \infty}{\longrightarrow} 0 .$$
\end{theorem}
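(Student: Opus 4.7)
The plan is to combine Theorem \ref{thm:conv_X}, which gives $\mathcal{W}_1([X^{x_0}_t],\nu^\star)\to 0$, with a direct control of $\mathcal{W}_1([X^{x_0}_t],[\bar{X}^{x_0}_t])$. The latter is obtained by a plateau-by-plateau domino strategy in the spirit of Section \ref{subsec:proof_Y_bar}, but with a crucial simplification: on each interval $[T_n,T_{n+1}]$ both $X^{x,n}$ and $\bar{X}^{x,n}$ use the same frozen diffusion coefficient $a_{n+1}$, so all terms of the form $(a_n-a_{n+1})$ that appeared when comparing $\bar Y$ to $X$ in Propositions \ref{prop:3.5:Y:bar} and \ref{prop:3.6:Y:bar} vanish. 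Triangle inequality then yields $\mathcal{W}_1([\bar{X}^{x_0}_t],\nu^\star)\to 0$.

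The first step is to establish the analogues of Lemma \ref{lemma:D.1a} (uniform polynomial moments of $V$ along $\bar{X}$), Lemma \ref{lemma:3.4.b:Y:bar} (strong one-step error $\|X^{x,n}_t-\bar{X}^{x,n}_t\|_p\le CV^{1/2}(x)\,t$), and the weak-error bounds of Propositions \ref{prop:3.5:Y:bar} and \ref{prop:3.6:Y:bar}. The proofs are verbatim copies of those in Section \ref{sec:bar-Y}, with $a(t)\equiv a_{n+1}$ on each plateau, and with $\zeta$ absent (or handled exactly as in Lemma \ref{lemma:D.1a}). We obtain in particular, for $\gamma\le\Gamma_{k+1}-u$, $u\in[\Gamma_k,\Gamma_{k+1})\cap[T_n,T_{n+1}]$, $u+t+\gamma\in[T_n,T_{n+1}]$ and $[f]_{\textup{Lip}}=1$,
$$
\bigl|\mathbb{E}[P^{X,n}_t f(\bar{X}^{x,n}_{\gamma})]-\mathbb{E}[P^{X,n}_t f(X^{x,n}_{\gamma})]\bigr|\le C V^{2}(x)\Bigl(a_{n+1}^{-2}t^{-1/2}\gamma^{2}+a_{n+1}^{-3}t^{-1}\gamma^{2}\Bigr).
$$

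The second step is the domino decomposition on the plateau $[T_n,T_{n+1}]$, exactly as in Section \ref{subsec:proof_Y_bar}: we split into an ergodic sum on $[T_n,T_{n+1}-T]$, a small-time error sum on $[T_{n+1}-T,T_{n+1}]$, and two end-point remainders of size $\le 2\gamma_{N(T_n)}$. Plugging the revised weak bound above into the ergodic sum (after inserting a kernel $P^{X,n}_{T/2}$ and using Theorem \ref{thm:confluence} and the Kantorovich--Rubinstein duality exactly as in the proof of Theorem \ref{thm:main}(b)), and controlling the small-time sum by Proposition \ref{prop:3.6:Y:bar}'s analogue, one obtains
$$
\mathcal{W}_1\bigl([X^{x,n}_{T_{n+1}-T_n}],[\bar{X}^{x,n}_{T_{n+1}-T_n}]\bigr)\;\le\; C\,a_{n+1}^{-3}\,e^{C_1/a_{n+1}^{2}}\,\rho_{n+1}^{-1}\,\gamma_{N(T_n)+1}\,V^{2}(x).
$$
The step size $\gamma_{N(T_n)+1}$ appears here in place of the factor $(a_n-a_{n+1})$ in Section \ref{subsec:proof_Y_bar}.

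The third step is the inter-plateau iteration. Setting $w_n:=\mathcal{W}_1([X^{x_0}_{T_n}],[\bar{X}^{x_0}_{T_n}])$, Theorem \ref{thm:confluence} gives the standard recursion
$$
w_{n+1}\;\le\;\mu_{n+1}\,w_n\;+\;\tilde{v}_{n+1}\,V^{2}(x_0),\qquad \tilde{v}_{n+1}:=C\,e^{(C_1+C_2)/a_{n+1}^{2}}\,a_{n+1}^{-3}\,\gamma_{N(T_n)+1},
$$
with $\mu_n$ as in \eqref{eq:def_mu}. Since $a_n^{-2}\asymp\log n$, one has $e^{(C_1+C_2)/a_n^{2}}\asymp n^{(1+\beta)(C_1+C_2)/A^{2}}$, which is a polynomial factor in $n$; as the step sequence satisfies $\gamma_n\downarrow 0$ with $\sum\gamma_n^{2}<\infty$, any choice $\gamma_n=\gamma_1/n^{\alpha}$, $\alpha\in(1/2,1]$, gives $\gamma_{N(T_n)+1}=O(n^{-(1+\beta)\alpha})$. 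Taking $A$ large enough (and, if necessary, $\beta$ suitably small) so that
$$
(1+\beta)\bigl(\alpha-(C_1{+}C_2)/A^{2}\bigr)>0,
$$
we get $\tilde{v}_n\to 0$ and $n\mu_n=o(\tilde{v}_n)$, whence $w_n\le C\tilde{v}_{n+1}V^{2}(x_0)\to 0$ by the same iteration as in the proof of Theorem \ref{thm:main}(b). The conclusion at arbitrary times $t\in[T_n,T_{n+1})$ follows as in the end of Section \ref{subsec:proof_Y} by applying the domino argument on $[T_n,T_n+t]$, and the triangle inequality with Theorem \ref{thm:conv_X} yields $\mathcal{W}_1([\bar{X}^{x_0}_t],\nu^\star)\to 0$.

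The main obstacle is the third step: the ergodicity constant $\rho_{n+1}^{-1}=e^{C_2/a_{n+1}^{2}}$ and the prefactor $e^{C_1/a_{n+1}^{2}}$ both blow up polynomially in $n$ on the plateau schedule, so one has to choose $A$ large enough (as a function of $C_1$, $C_2$, $\beta$, and of the decay rate of $\gamma_n$) to guarantee that the per-plateau error $\tilde{v}_n$ still tends to zero; this is the sole quantitative constraint behind the ``$A$ large enough'' hypothesis.
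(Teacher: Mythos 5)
Your proposal is correct and follows essentially the same route as the paper's own (very terse) proof: the triangle inequality with Theorem \ref{thm:conv_X}, the frozen-coefficient analogues of the strong/weak one-step error bounds (with the $(a_n-a_{n+1})$ terms replaced by powers of $\gamma_{N(T_n)}$), the per-plateau domino decomposition into ergodic, error and remainder terms, and the inter-plateau recursion $w_{n+1}\le\mu_{n+1}w_n+\tilde v_{n+1}V^2(x_0)$ closed by taking $A$ large. The only cosmetic difference is that you verify $\tilde v_n\to 0$ for the specific steps $\gamma_n=\gamma_1/n^\alpha$, whereas the paper's Lemma \ref{lemma:app:gamma:1} gives $\gamma_{N(T_n)}=o(n^{-(1+\beta)})$ for any admissible step sequence, which yields the same conclusion under the cleaner condition $A^2>C_1+C_2$ (up to the harmless $\log$ factors from the error term).
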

The proof of this theorem is given in the Supplementary Material.

\section{Experiments}
\label{sec:experiments}

In this section, we compare the performances of adaptive Langevin-Simulated Annealing algorithms versus vanilla SGLD, that is the Langevin algorithm with constant (additive) $\sigma$\footnote{Our code is available at \url{https://github.com/Bras-P/langevin-simulated-annealing}.}.
We train an artificial neural network on the MNIST dataset \cite{mnist}, which is composed of grayscale images of size $28 \times 28$ of handwritten digits (from 0 to 9). The goal is to recognize the handwritten digit and to classify the images. 60000 images are used for training and 10000 images are used for test.

We consider a feedforward neural network with two hidden dense layers with 128 units each and with ReLU activation.
For the adaptive Langevin algorithms, we choose the function $\sigma$ as a diagonal matrix which is the square root of the preconditioner in RMSprop \cite{li2015}, in Adam \cite{adam} and in Adadelta \cite{adadelta} respectively (see also Section \ref{subsec:practitioner}), giving L-RMSprop, L-Adam and L-Adadelta respectively.
The results are given in Figure \ref{fig:MNIST:langevin} and in Table \ref{table:MNIST:langevin}.

As pointed out in the literature (see the references Section \ref{subsec:practitioner}), the preconditioned Langevin algorithms show significant improvement compared with the vanilla SGLD algorithm. The convergence is faster and they achieve a lower error on the test set. We also display the value of the loss function on the train set during the training to show that the better performances of the preconditioned algorithms are not due to some overfitting effect.


\begin{figure}
\centering

\begin{tikzpicture}
\begin{axis}[
	xlabel=Epochs,
	ylabel=Test accuracy,
	xmin=0,
	ymin=0.9,
	ymax=0.99,
	legend style={at={(1,0)},anchor=south east},
	grid=both,
	minor grid style={gray!25},
	major grid style={gray!25},
	width=0.85\linewidth,
	height=0.16\paperheight,
	line width=1pt,
	mark size=2.5pt,
	mark options={solid},
	]
\addplot[color=black, mark=*] %
	table[x=time,y=f,col sep=comma]{./data_langevin1/SGLDOptimizer.csv};
\addlegendentry{SGLD};
\addplot[color=blue, mark=*, style=densely dotted] %
	table[x=time,y=f,col sep=comma]{./data_langevin1/pSGLDOptimizer.csv};
\addlegendentry{L-RMSprop};
\addplot[color=red, mark=diamond*, style=densely dashed] %
	table[x=time,y=f,col sep=comma]{./data_langevin1/aSGLDOptimizer.csv};
\addlegendentry{L-Adam};
\addplot[color=olive, mark=square*, style=densely dashdotted] %
	table[x=time,y=f,col sep=comma]{./data_langevin1/pAdaSGLDOptimizer.csv};
\addlegendentry{L-Adadelta};
\end{axis}
\end{tikzpicture}

\begin{tikzpicture}
\begin{axis}[
	xlabel=Epochs,
	ylabel=Train loss,
	xmin=0,
	ymin=0,
	ymax=0.5,
	grid=both,
	minor grid style={gray!25},
	major grid style={gray!25},
	width=0.85\linewidth,
	height=0.16\paperheight,
	line width=1pt,
	mark size=2.5pt,
	mark options={solid},
	]
\addplot[color=black, mark=*] %
	table[x=time,y=f,col sep=comma]{./data_langevin1/SGLDOptimizer_loss.csv};
\addlegendentry{SGLD};
\addplot[color=blue, mark=*, style=densely dotted] %
	table[x=time,y=f,col sep=comma]{./data_langevin1/pSGLDOptimizer_loss.csv};
\addlegendentry{L-RMSprop};
\addplot[color=red, mark=diamond*, style=densely dashed] %
	table[x=time,y=f,col sep=comma]{./data_langevin1/aSGLDOptimizer_loss.csv};
\addlegendentry{L-Adam};
\addplot[color=olive, mark=square*, style=densely dashdotted] %
	table[x=time,y=f,col sep=comma]{./data_langevin1/pAdaSGLDOptimizer_loss.csv};
\addlegendentry{L-Adadelta};
\end{axis}
\end{tikzpicture}

\caption{\textit{Performance of preconditioned Langevin algorithms compared with vanilla SGLD on the MNIST dataset. The values of the hyperparameters are $a(n) = A \log^{-1/2}(c_1n+e)$ with $A=2.10^{-3}$ and where $c_1n=1$ after $5$ epochs; $\gamma_n=\gamma_1/(1+c_2n)$ where $c_2 n=1$ after $5$ epochs and where for SGLD, $\gamma_1=0.001$ for L-RMSprop and L-Adam and $\gamma_1=0.1$ for L-Adadelta.}}
\label{fig:MNIST:langevin}
\end{figure}
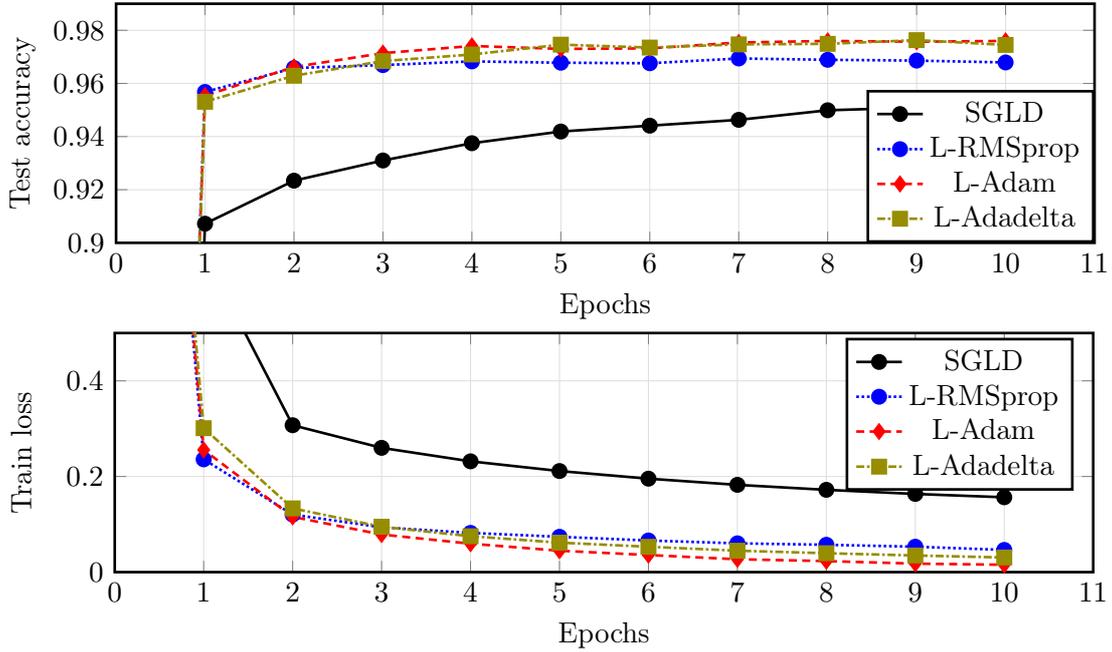

\begin{table}
\centering
\begin{tabular}{ccccc}
\hline
Preconditioner & SGLD & L-RMSprop & L-Adam & L-Adadelta  \\ 
\hline 
Best accuracy & 95,24 \% & 96,94 \% & 97,60 \% & 97,63 \%
\end{tabular}
\caption{\textit{Best accuracy performance on the MNIST test set after 10 epochs.}}
\label{table:MNIST:langevin}
\end{table}

We also compare preconditioned Langevin algorithms with their respective non-Langevin counterpart. For shallow neural networks, adding an exogenous noise does not seem to improve significantly the performances of the optimization algorithm. However, for deep neural networks, which are highly non-linear and which loss function has many local minima, the Langevin version is competitive with the currently widely used non-Langevin algorithms and can even lead to improvement. The results are given in Figure \ref{fig:MNIST:deep} where we used a deep neural network with 20 hidden layers with 32 units each and with ReLU activation.

\begin{figure}
\centering

\begin{tikzpicture}
\begin{axis}[
	xlabel=Epochs,
	ylabel=Test accuracy,
	xmin=0,
	ymin=0.7,
	ymax=1,
	legend style={at={(1,0)},anchor=south east},
	grid=both,
	minor grid style={gray!25},
	major grid style={gray!25},
	width=0.45\linewidth,
	height=0.15\paperheight,
	line width=1pt,
	mark size=2.5pt,
	mark options={solid},
	]
\addplot[color=black, mark=*] %
	table[x=time,y=f,col sep=comma]{./data_langevin1/SGD_deep.csv};
\addlegendentry{SGD};
\addplot[color=blue, mark=*, style=densely dotted] %
	table[x=time,y=f,col sep=comma]{./data_langevin1/aSGLDOptimizer_deep.csv};
\addlegendentry{Adam};
\addplot[color=red, mark=diamond*, style=densely dashed] %
	table[x=time,y=f,col sep=comma]{./data_langevin1/aSGLDOptimizer_deep_langevin.csv};
\addlegendentry{L-Adam};
\end{axis}
\end{tikzpicture}%
~
\begin{tikzpicture}
\begin{axis}[
	xlabel=Epochs,
	xmin=0,
	ymin=0.7,
	ymax=1,
	legend style={at={(1,0)},anchor=south east},
	grid=both,
	minor grid style={gray!25},
	major grid style={gray!25},
	width=0.45\linewidth,
	height=0.15\paperheight,
	line width=1pt,
	mark size=2.5pt,
	mark options={solid},
	]
\addplot[color=black, mark=*] %
	table[x=time,y=f,col sep=comma]{./data_langevin1/SGD_deep.csv};
\addlegendentry{SGD};
\addplot[color=blue, mark=*, style=densely dotted] %
	table[x=time,y=f,col sep=comma]{./data_langevin1/pSGLDOptimizer_deep.csv};
\addlegendentry{RMSprop};
\addplot[color=red, mark=diamond*, style=densely dashed] %
	table[x=time,y=f,col sep=comma]{./data_langevin1/pSGLDOptimizer_deep_langevin.csv};
\addlegendentry{L-RMSprop};
\end{axis}
\end{tikzpicture}

\caption{\textit{Side-by-side comparison of optimization algorithms with their respective Langevin counterparts for the training of a deep neural network on the MNIST dataset. We display the performance of SGD for reference. The values of the hyperparameters are $a(n) = A \log^{-1/2}(c_1n+e)$ with $A=1.10^{-3}$ for L-Adam and $A=5.10^{-4}$ for L-RMSprop and where $c_1n=1$ after $5$ epochs; $\gamma_n=\gamma_1/(1+c_2n)$ where $c_2 n=1$ after $5$ epochs and where $\gamma_1=0.01$ for SGLD and $\gamma_1=0.001$ for the others.}}
\label{fig:MNIST:deep}
\end{figure}
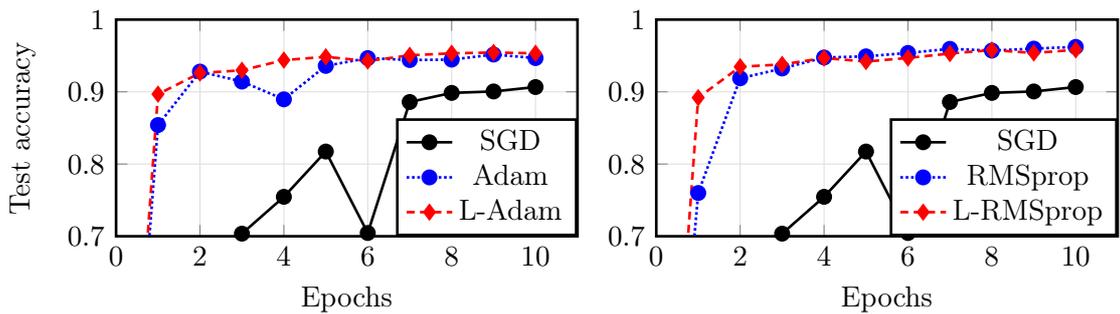

In order to understand how sensitive are these methods to poor initialization, we run an experiment on the previous deep neural network where all the weights are initialized to zero, as in \cite[Section 4.1]{neelakantan2015}. We plot the accuracy on the test set in Figure \ref{fig:MNIST:zeros}.
We observe that the non-Langevin optimizer needs some time before escaping from the neighbourhood of the initial point whereas in its Langevin version, the Gaussian noise is effective to rapidly escape from highly degenerated saddle points of the loss.


\begin{figure}
\centering

\begin{tikzpicture}
\begin{axis}[
	xlabel=Epochs $\times 10$,
	ylabel=Test accuracy,
	xmin=0,
	ymin=0,
	ymax=1,
	legend style={at={(1,0)},anchor=south east},
	grid=both,
	minor grid style={gray!25},
	major grid style={gray!25},
	width=0.85\linewidth,
	height=0.16\paperheight,
	line width=1pt,
	mark size=2.5pt,
	mark options={solid},
	]
\addplot[color=black, mark=*] %
	table[x=time,y=f,col sep=comma]{./data_langevin2/adam_zeros_data.csv};
\addlegendentry{Adam};
\addplot[color=blue, mark=square*] %
	table[x=time,y=f,col sep=comma]{./data_langevin2/aSGLD_zeros_data.csv};
\addlegendentry{L-Adam};
\end{axis}
\end{tikzpicture}

\caption{\textit{Performance of the Adam optimizer compared with its Langevin version at the beginning of the training of a deep neural network on the MNIST dataset with poor initialization. We record the accuracy on the test set 10 times per epoch.}}
\label{fig:MNIST:zeros}
\end{figure}
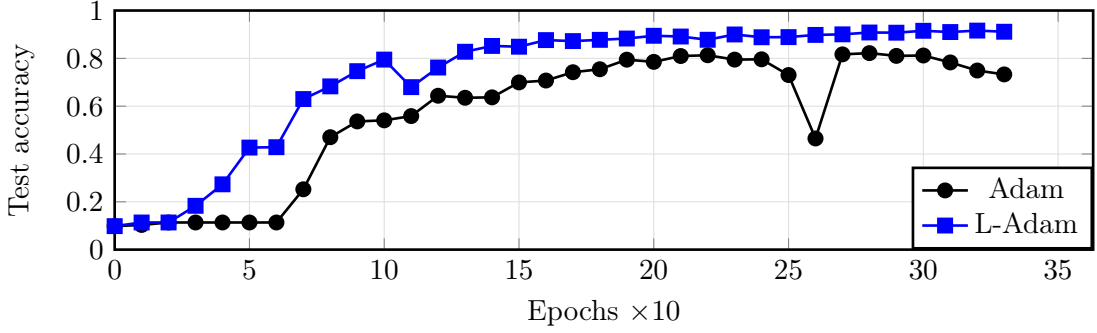

\appendix

\section{Appendix}

\begin{lemma}
\label{lemma:BDG}
Let $Z$ and $\widetilde{Z}$ be two continuous diffusion processes. Then for all $t \ge 0$ and for all $p \ge 2$:
$$ \left\| \int_0^t (\sigma(Z_s) - \sigma(\widetilde{Z}_s)) dW_s \right\|_p \le C^{BDG}_p [\sigma]_{\textup{Lip}} \left(\int_0^t \| Z_s - \widetilde{Z}_s \|_p^2 ds\right)^{1/2} ,$$
where $C^{BDG}_p$ is a constant which only depends on $p$.
\end{lemma}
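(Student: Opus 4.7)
The plan is to combine the Burkholder--Davis--Gundy inequality with Minkowski's integral inequality, exploiting the Lipschitz continuity of $\sigma$ at the pointwise level.

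First, since $M_t := \int_0^t (\sigma(Z_s)-\sigma(\widetilde Z_s))\,dW_s$ is a continuous local martingale (null at $0$) with quadratic variation
\[
\langle M \rangle_t = \int_0^t \bigl\| \sigma(Z_s)-\sigma(\widetilde Z_s)\bigr\|^2 \, ds,
\]
the classical BDG inequality yields a constant $c_p$, depending only on $p$, such that
\[
\|M_t\|_p \;\le\; c_p \, \bigl\| \langle M\rangle_t^{1/2} \bigr\|_p
\;=\; c_p \, \Bigl\| \Bigl(\textstyle\int_0^t \|\sigma(Z_s)-\sigma(\widetilde Z_s)\|^2\,ds\Bigr)^{1/2}\Bigr\|_p.
\]
Using the Lipschitz assumption $\|\sigma(x)-\sigma(y)\|\le [\sigma]_{\mathrm{Lip}}|x-y|$ inside the integrand bounds this by
\[
c_p\,[\sigma]_{\mathrm{Lip}} \,\Bigl\| \Bigl(\textstyle\int_0^t |Z_s-\widetilde Z_s|^2\,ds\Bigr)^{1/2}\Bigr\|_p.
\]

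Next, I would invoke Minkowski's integral inequality in $L^{p/2}$, which is valid since $p\ge 2$ so that $p/2\ge 1$. Writing the above $L^p$-norm of a square root as the square root of an $L^{p/2}$-norm,
\[
\Bigl\| \Bigl(\textstyle\int_0^t |Z_s-\widetilde Z_s|^2\,ds\Bigr)^{1/2}\Bigr\|_p
= \Bigl\| \textstyle\int_0^t |Z_s-\widetilde Z_s|^2\,ds \Bigr\|_{p/2}^{1/2}
\le \Bigl( \textstyle\int_0^t \bigl\| |Z_s-\widetilde Z_s|^2\bigr\|_{p/2}\,ds \Bigr)^{1/2}
= \Bigl( \textstyle\int_0^t \|Z_s-\widetilde Z_s\|_p^2\,ds\Bigr)^{1/2}.
\]
Setting $C^{BDG}_p := c_p$ then yields the announced inequality.

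There is no real obstacle here: the only subtlety is the use of Minkowski's integral inequality at exponent $p/2\ge 1$ to interchange the $L^p(\Omega)$-norm with the $ds$-integral, and the fact that BDG on a continuous $L^p$-bounded martingale gives a constant depending only on $p$. The proof is valid as stated for all $p\ge 2$.
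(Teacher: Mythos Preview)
Your proof is correct and follows exactly the same route as the paper: apply the Burkholder--Davis--Gundy inequality, insert the Lipschitz bound on $\sigma$, and then use the generalized Minkowski inequality in $L^{p/2}$ (valid since $p\ge 2$) to pull the $ds$-integral outside the norm. The paper simply compresses these three steps into a single displayed chain of inequalities.
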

\begin{proof}
It follows from the generalized Minkowski and the Burkholder-Davis-Gundy inequalities that
$$ \left\| \int_0^t (\sigma(Z_s) - \sigma(\widetilde{Z}_s)) dW_s \right\|_p \le C^{BDG}_p [\sigma]_{\text{Lip}} \left\| \int_0^t |Z_s - \widetilde{Z}_s|^2 ds \right\|_{p/2}^{1/2} \le C^{BDG}_p [\sigma]_{\text{Lip}} \left( \int_0^t \|Z_s - \widetilde{Z}_s\|^2_p ds \right)^{1/2}$$
\end{proof}

We now give some results on the step sequence $(\gamma_n)$ associated to the Euler-Maruyama scheme. Let us recall that the sequence $(T_n)$ is defined in \eqref{eq:def_T_n}.

\begin{lemma}
\label{lemma:app:tauber:1}
Let $(u_n)$ be a positive and non-increasing sequence such that $\sum_n u_n < \infty$. Then $u_n = o(n^{-1})$.
\end{lemma}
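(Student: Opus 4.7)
The plan is to exploit monotonicity of $(u_n)$ to bound $nu_n$ by a tail of the convergent series $\sum u_n$, and then invoke the Cauchy criterion for convergent series.

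More concretely, I would proceed in three short steps. First, for any integer $n \ge 2$, I set $m = \lceil n/2 \rceil$ and observe that, because $(u_k)$ is non-increasing, $u_k \ge u_n$ for every $k \in \{m, m+1, \dots, n\}$. Summing this chain of inequalities gives
\[
\sum_{k=m}^{n} u_k \ge (n-m+1)\, u_n \ge \tfrac{n}{2}\, u_n.
\]
Second, since the series $\sum_{k\ge 1} u_k$ converges, its partial sums form a Cauchy sequence, so the left-hand side $\sum_{k=m}^{n} u_k$ tends to $0$ as $n \to \infty$ (both $m = \lceil n/2 \rceil$ and $n$ tend to infinity, and this quantity equals $S_n - S_{m-1}$ where $S_n$ denotes the partial sum). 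Third, combining these two facts yields $\tfrac{n}{2} u_n \to 0$, i.e. $n u_n \to 0$, which is exactly $u_n = o(n^{-1})$.

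The argument is elementary and I do not expect any obstacle; the only point worth being a bit careful about is the index bookkeeping with $\lceil n/2 \rceil$, to make sure the block $\{m,\ldots,n\}$ has at least $n/2$ terms (which is true for $n \ge 2$). Positivity of $(u_n)$ is used only to ensure that $u_n \ge 0$, so that the inequality $\tfrac{n}{2} u_n \le \sum_{k=m}^{n} u_k$ gives both the upper bound and non-negativity needed to conclude $nu_n \to 0$.
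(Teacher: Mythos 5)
Your proof is correct and uses essentially the same argument as the paper, which writes $Nu_{2N} \le \sum_{n=N}^{2N} u_n \to 0$; your version with $m=\lceil n/2\rceil$ just handles even and odd indices uniformly. No issues.
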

\begin{proof}
We have $Nu_{2N} \le \sum_{n=N}^{2N} u_n\to 0 $ as $N \to \infty$.
\end{proof}

\begin{lemma}
\label{lemma:app:gamma:1}
We have
\begin{equation}
\gamma_{N(T_n)} = o\left(n^{-(1+\beta)}\right).
\end{equation}
\end{lemma}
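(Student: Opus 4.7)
The plan is to reduce the claim to showing $T_n \gamma_{N(T_n)} \to 0$ as $n \to \infty$, since $n^{-(1+\beta)} \asymp T_n^{-1}$ by the definition \eqref{eq:def_T_n} of $T_n$. The key ingredient is the $\ell^2$-summability $\sum_n \gamma_n^2 < \infty$ from \eqref{Eq:eq:gamma_assumptions}, combined with the monotonicity of $(\gamma_n)$.

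First I would check that $N(T_n) \to \infty$: since $\gamma_k \le \gamma_1$ for every $k$, the definition of $N$ gives $(N(T_n)+1)\gamma_1 \ge \Gamma_{N(T_n)+1} > T_n \to \infty$, hence $N(T_n) \to \infty$ and a fortiori $\gamma_{N(T_n)} \to 0$. Then, using $T_n < \Gamma_{N(T_n)+1}$, I would write
$$ T_n \gamma_{N(T_n)} \le \Gamma_{N(T_n)+1} \gamma_{N(T_n)} = \sum_{k=1}^{N(T_n)+1} \gamma_k \gamma_{N(T_n)}. $$

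Next, for an arbitrary fixed $n_0 \in \mathbb{N}$ and $n$ large enough so that $N(T_n) > n_0$, I would split this sum at $n_0$. For the indices $n_0 < k \le N(T_n)$, monotonicity gives $\gamma_{N(T_n)} \le \gamma_k$, whence $\gamma_k \gamma_{N(T_n)} \le \gamma_k^2$; the last term satisfies $\gamma_{N(T_n)+1} \gamma_{N(T_n)} \le \gamma_{N(T_n)}^2$. This yields
$$ T_n \gamma_{N(T_n)} \le \Gamma_{n_0} \gamma_{N(T_n)} + \sum_{k=n_0+1}^{N(T_n)} \gamma_k^2 + \gamma_{N(T_n)}^2. $$

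Finally, taking $\limsup_{n \to \infty}$ at $n_0$ fixed, the first and third terms vanish because $\gamma_{N(T_n)} \to 0$ and $\Gamma_{n_0}$ is a constant, leaving $\limsup_n T_n \gamma_{N(T_n)} \le \sum_{k > n_0} \gamma_k^2$. Letting $n_0 \to \infty$ and invoking $\sum_k \gamma_k^2 < \infty$ closes the argument. The proof is essentially a Tauberian-type maneuver, so there is no substantial obstacle beyond spotting the right decomposition; the only delicate point is the handling of the boundary term $k = N(T_n)+1$, which is not controlled by a single $\gamma_k^2$ with $k > n_0$ but does go to $0$ on its own.
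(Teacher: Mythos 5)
Your proof is correct. The reduction to $T_n\gamma_{N(T_n)}\to 0$ is legitimate since $T_n^{-1}=C_{(T)}^{-1}n^{-(1+\beta)}$, the bound $T_n<\Gamma_{N(T_n)+1}$ follows from the definition of $N$, and the three-way split at a fixed $n_0$ correctly converts the products $\gamma_k\gamma_{N(T_n)}$ into squares $\gamma_k^2$ for $n_0<k\le N(T_n)$ by monotonicity, leaving only terms that vanish as $n\to\infty$ at fixed $n_0$; the final passage $n_0\to\infty$ uses exactly the hypothesis $\sum_k\gamma_k^2<\infty$. Your route differs from the paper's: the paper first invokes its Tauberian Lemma \ref{lemma:app:tauber:1} applied to $u_n=\gamma_n^2$ to get $\gamma_n=o(n^{-1/2})$, then deduces $\Gamma_n=o(n^{1/2})$ by a Ces\`aro argument, hence $x^2=o(N(x))$, and finally chains these little-$o$ estimates to conclude $\gamma_{N(T_n)}=o(N(T_n)^{-1/2})=o(T_n^{-1})$. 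You instead run the Tauberian maneuver directly on the quantity $\Gamma_{N(T_n)+1}\gamma_{N(T_n)}$, bypassing Lemma \ref{lemma:app:tauber:1} and the intermediate asymptotics entirely. Your argument is shorter and self-contained; the paper's version has the advantage of producing reusable intermediate facts ($\gamma_n=o(n^{-1/2})$, $\Gamma_n=o(n^{1/2})$, $N(x)\gg x^2$). Both ultimately establish the same slightly stronger statement $\gamma_{N(t)}=o(1/t)$, and your handling of the boundary index $k=N(T_n)+1$ is the only place requiring the extra care you correctly flagged.
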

\begin{proof}
Using the previous lemma, $\gamma_n = o(n^{-1/2})$ so that $\Gamma_n = o(n^{1/2})$ and then $x^2 = o(N(x))$ as $x \rightarrow \infty$ and then
$ \gamma_{N(T_n)} = o\left(N(T_n)^{-1/2}\right) = o\left(n^{-(1+\beta)}\right).$
\end{proof}

\begin{lemma}
\label{lemma:app:gamma:2}
The sequence $(\gamma_{N(T_{n+1}-T)}/\gamma_{N(T_{n+1})})$ is bounded.
\end{lemma}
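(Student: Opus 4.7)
The plan is to exploit assumption \eqref{Eq:eq:gamma_assumptions_2} to control the multiplicative variation of $(\gamma_k)$ over short blocks, then telescope across the block of indices between $m := N(T_{n+1}-T)$ and $M := N(T_{n+1})$.

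First I would extract the one-step estimate. By \eqref{Eq:eq:gamma_assumptions_2}, for every $\varepsilon>0$ there exists $n_0$ such that for all $k\ge n_0$ one has $\gamma_k - \gamma_{k+1} \le (\varpi+\varepsilon)\gamma_{k+1}^2$, hence
$$ \frac{\gamma_k}{\gamma_{k+1}} \le 1 + (\varpi+\varepsilon)\gamma_{k+1}, \qquad \ln\gamma_k - \ln\gamma_{k+1} \le (\varpi+\varepsilon)\gamma_{k+1}. $$
Telescoping this inequality between $m$ and $M$ (both of which tend to infinity with $n$, so both eventually exceed $n_0$) yields
$$ \ln\gamma_m - \ln\gamma_M \le (\varpi+\varepsilon) \sum_{k=m+1}^{M} \gamma_k. $$

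Next I would bound the sum $\sum_{k=m+1}^M \gamma_k$ by a constant independent of $n$. By definition of $N(\cdot)$, $\Gamma_{m+1} > T_{n+1}-T$ and $\Gamma_M \le T_{n+1}$, so
$$ \sum_{k=m+2}^{M} \gamma_k = \Gamma_M - \Gamma_{m+1} \le T_{n+1} - (T_{n+1}-T) = T, $$
and consequently
$$ \sum_{k=m+1}^{M} \gamma_k \le \gamma_{m+1} + T \le \gamma_1 + T, $$
since $(\gamma_k)$ is non-increasing. Plugging this bound into the telescoping inequality gives
$$ \frac{\gamma_{N(T_{n+1}-T)}}{\gamma_{N(T_{n+1})}} = \frac{\gamma_m}{\gamma_M} \le \exp\bigl((\varpi+\varepsilon)(\gamma_1 + T)\bigr), $$
which is the desired uniform bound for all $n$ large enough; for the finitely many small values of $n$, the ratio is finite, so one adjusts the constant.

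There is no real obstacle here: the only subtle point is making sure that the telescoping sum $\sum_{k=m+1}^M \gamma_k$ picks up only a contribution of order $T$ and not of order $T_{n+1}$. This follows from $\Gamma_{m+1} > T_{n+1}-T$, which is the defining property of $N(\cdot)$ and is the reason one includes $\gamma_{m+1}$ separately in the split above. No further property of $(T_n)$ beyond those already used is needed.
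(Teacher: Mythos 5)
Your proof is correct and follows essentially the same route as the paper's: use \eqref{Eq:eq:gamma_assumptions_2} to get $\gamma_k/\gamma_{k+1}\le 1+\varpi'\gamma_{k+1}$, take logarithms, telescope over the block of indices, and bound the resulting sum of steps by $\Gamma_{N(T_{n+1})}-\Gamma_{N(T_{n+1}-T)}\le T+\gamma_1$. If anything, your explicit handling of the extra boundary term $\gamma_{m+1}$ (via $\Gamma_{m+1}>T_{n+1}-T$) is slightly more careful than the paper's own write-up of the same computation.
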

\begin{proof}
Using \eqref{Eq:eq:gamma_assumptions_2}, we have for $\varpi'>\varpi$ and for large enough $k$, $(\gamma_k-\gamma_{k+1})/\gamma_{k+1}^2 \le \varpi'$ so that $\gamma_k/\gamma_{k+1} \le 1 + \varpi' \gamma_{k+1}$ and then
\begin{align*}
\log\left(\frac{\gamma_{N(T_{n+1}-T))}}{\gamma_{N(T_{n+1})}}\right) & = \sum_{k=N(T_{n+1}-T)}^{N(T_{n+1})-1} \log\left(\frac{\gamma_k}{\gamma_{k+1}}\right) \le C \sum_{k=N(T_n)}^{N(T_{n+1})-1} \gamma_k = C\left(\Gamma_{N(T_{n+1})}-\Gamma_{N(T_{n+1}-T)}\right) \\
& \le C(T_{n+1}-(T_{n+1}-T)).
\end{align*}
\end{proof}

\newpage

\section{Supplementary Material}

\subsection{Proof of Proposition \ref{prop:W_nu}}

\begin{proof}
We have
$$\frac{\nu_{a_{n+1}}(x)}{\nu_{a_n}(x)} = \frac{\mathcal{Z}_{a_{n+1}}}{\mathcal{Z}_{a_n}} e^{-2(V(x)-V^\star)(a_{n+1}^{-2} - a_{n}^{-2})} \le \frac{\mathcal{Z}_{a_{n+1}}}{\mathcal{Z}_{a_n}} =: M_n.$$
We now consider $(P_i)_{1 \le i \le m^\star}$ a partition of $\mathbb{R}^d$ such that for all $i$, $x_i^\star \in \mathring{P_i}$.
Let us prove that for all $1 \le i \le m^\star$,
\begin{equation}
\label{eq:Z_a_equivalent:3}
\mathcal{Z}_{a,i}^{-1} := \int_{\mathbb{R}^d} e^{-2(V(x)-V^\star)/a^2} \mathds{1}_{x \in P_i} dx \underset{a \to 0}{\sim} a^d \int_{\mathbb{R}^d} e^{-x^\top \nabla^2 V(x_i^\star) x} dx .
\end{equation}
Let $r>0$ ; let us consider $\widetilde{V}_i$ defined as
$$ \widetilde{V}_i(x) = \left\lbrace \begin{array}{ll}
V(x) & \text{ if } x \in \textbf{B}(x_i^\star,r) \\
|x-x_i^\star|^2 + V^\star & \text{ otherwise} .
\end{array} \right.  $$
We also define $\widetilde{\mathcal{Z}}_{a,i}^{-1} := \int_{\mathbb{R}^d} e^{-2(\widetilde{V}_i(x)-V^\star)/a^2}\mathds{1}_{x \in P_i}dx$.
Then, owing to $V^\star >0$ and \eqref{Eq:eq:V_assumptions},
\begin{equation}
\label{eq:Z_a_equivalent:2}
\forall x \in \mathbb{R}^d, \ C|x-x_i^\star|^2 \le \widetilde{V}_i(x) - V^\star \le C'|x-x_i^\star|^2
\end{equation}
and then
$$ \widetilde{\mathcal{Z}}_{a,i}^{-1} = a^d \int_{\mathbb{R}^d} e^{-2(\widetilde{V}_i(ax+x_i^\star)-V^\star)/a^2} \mathds{1}_{x \in a^{-1}(P_i-x_i^\star)} dx 
\underset{a \rightarrow 0}{\sim} a^d \int_{\mathbb{R}^d} e^{-x^\top \nabla^2 V(x_i^\star) x} dx ,$$
where we get the equivalence by dominated convergence ; the domination comes from \eqref{eq:Z_a_equivalent:2}. Then
\begin{align*}
& \mathcal{Z}_{a,i}^{-1} - \widetilde{\mathcal{Z}}_{a,i}^{-1} = \int_{\textbf{B}(x_i^\star,r)^c} e^{-2(V(x)-V^\star)/a^2}\mathds{1}_{x \in P_i}dx - \int_{\textbf{B}(x_i^\star,r)^c} e^{-2(\widetilde{V}_i(x)-V^\star)/a^2}\mathds{1}_{x \in P_i}dx =: I_1 - I_2, \\
& I_2 = a^d \int_{\textbf{B}(0,r/a)^c} e^{-2|x|^2} \mathds{1}_{x \in a^{-1}(P_i-x_i^\star)} dx \le a^d \int_{\textbf{B}(0,r/a)^c} e^{-2|x|^2}dx = o(a^d) = o\left(\widetilde{\mathcal{Z}}_{a,i}^{-1}\right).
\end{align*}
Moreover using \cite[Proposition 1]{bras2021} we have $\mathcal{Z}_{a,i} I_1 \to 0$ as $a\to 0$, so that
$$ \mathcal{Z}_{a,i}^{-1} = \widetilde{\mathcal{Z}}_{a,i}^{-1} + o\left(\mathcal{Z}_{a,i}^{-1}\right) + o\left( \widetilde{\mathcal{Z}}_{a,i}^{-1}\right) \sim  \widetilde{\mathcal{Z}}_{a,i}^{-1} ,$$
which proves \eqref{eq:Z_a_equivalent:3} and then
\begin{align}
\label{eq:Z_a_equivalent}
\mathcal{Z}_a^{-1} \underset{a \rightarrow 0}{\sim} a^d \sum_{i=1}^{m^\star} \int_{\mathbb{R}^d} e^{-x^\top \nabla^2 V(x_i^\star) x} dx .
\end{align}

We now prove that
\begin{equation}
\label{eq:app:2_a_diff}
\mathcal{Z}_{a_n}^{-1} - \mathcal{Z}_{a_{n+1}}^{-1} \le C a_{n+1}^{d-1}(a_n-a_{n+1}) .
\end{equation}
Indeed, by convexity we have for all $z \in \mathbb{R}$
\begin{align}
\label{eq:app:exp_x_2}
\left|e^{-2z/a_{n}^2} - e^{-2z/a_{n+1}^2} \right| & \le 2e^{-2z/a_{n}^2}z \left|\frac{1}{a_n^2} - \frac{1}{a_{n+1}^2}\right| \le 4 e^{-2z/a_{n}^2} \frac{z}{a_{n+1}^2} \frac{(a_n-a_{n+1})}{a_n}.
\end{align}
and then
\begin{align*}
& \mathcal{Z}_{a_n,i}^{-1} - \mathcal{Z}_{a_{n+1},i}^{-1} \\
& \quad = a_{n+1}^d \int_{\mathbb{R}^d} \left(e^{-2(V(a_{n+1}x+x_i^\star)-V^\star)/a_n^2} \mathds{1}_{x \in a_{n+1}^{-1} (P_i-x_i^\star)} - e^{-2(V(a_{n+1}x+x_i^\star)-V^\star)/a_{n+1}^2}\mathds{1}_{x \in a_{n+1}^{-1} (P_i-x_i^\star)}\right) dx \\
& \quad \le 4a_{n+1}^{d-1}(a_n - a_{n+1}) \underbrace{\int_{\mathbb{R}^d} e^{-2(V(a_{n+1}x+x_i^\star)-V^\star)/a_{n}^2} \frac{V(a_{n+1}x+x_i^\star)-V^\star}{a_{n+1}^2}\mathds{1}_{x \in a_{n+1}^{-1} (P_i-x_i^\star)} dx}_{:= I_3}.
\end{align*}
Let us also define
$$ \widetilde{I}_3 := \int_{\mathbb{R}^d} e^{-2(\widetilde{V}_i(a_{n+1}x+x_i^\star)-V^\star)/a_{n}^2} \frac{\widetilde{V}_i(a_{n+1}x+x_i^\star)-V^\star}{a_{n+1}^2}\mathds{1}_{x \in a_{n+1}^{-1} (P_i-x_i^\star)} dx .$$
Then $\widetilde{I_3}$ converges by dominated convergence and $|I_3 - \widetilde{I}_3|$ is bounded by
\begin{align*}
& \left|\int_{\mathbb{R}^d} \left(e^{-2(V(a_{n+1}x+x_i^\star)-V^\star)/a_{n}^2} \frac{V(a_{n+1}x+x_i^\star)-V^\star}{a_{n+1}^2} - e^{-2(\widetilde{V}_i(a_{n+1}x+x_i^\star)-V^\star)/a_{n}^2} \frac{\widetilde{V}_i(a_{n+1}x+x_i^\star)-V^\star}{a_{n+1}^2}\right) \right. \\
& \quad \quad \quad \mathds{1}_{x \in a_{n+1}^{-1} (P_i-x_i^\star)} dx \Big| \\
& \quad \le a_{n+1}^{-d-2} \int_{\textbf{B}(x_i^\star,r)^c} e^{-2(V(x)-V^\star)/a_{n}^2} (V(x)-V^\star) \mathds{1}_{x \in P_i} dx \\
& \quad \quad + \int_{\textbf{B}(0,r/a_{n+1})^c} e^{-2(\widetilde{V}_i(a_{n+1}x+x_i^\star)-V^\star)/a_{n}^2} \frac{\widetilde{V}_i(a_{n+1}x+x_i^\star)-V^\star}{a_{n+1}^2} \mathds{1}_{x \in a_{n+1}^{-1} (P_i-x_i^\star)} dx.
\end{align*}
The second integral converges to $0$ by dominated convergence by similar arguments as for $I_2$. Moreover we have for every $x \in \textbf{B}(x_i^\star,r)^c \cap P_i$, $V(x)-V^\star \ge \varepsilon$ for some $\varepsilon > 0$ and then for $n$ such that $a_n \le A/\sqrt{2}$:
\begin{align*}
& a_{n+1}^{-d-2} \int_{\textbf{B}(x_i^\star,r)^c} e^{-2(V(x)-V^\star)/a_{n}^2} (V(x)-V^\star) \mathds{1}_{x \in P_i} dx \\
& \quad \le Ca_{n+1}^{-d-2} \int_{\textbf{B}(x_i^\star,r)^c} e^{-2(V(x)-V^\star)/a_{n}^2} |x-x_i^\star|^2 \mathds{1}_{x \in P_i} dx \\
& \quad \le Ca_{n+1}^{-d-2} e^{-\varepsilon/a_n^2} \int_{\textbf{B}(x_i^\star,r)^c} e^{-(V(x)-V^\star)/a_{n}^2} |x-x_i^\star|^2 \mathds{1}_{x \in P_i} dx \\
& \quad \le Ca_{n+1}^{-d-2} e^{-\varepsilon/a_n^2} \int_{\mathbb{R}^d} e^{-2(V(x)-V^\star)/A^2} |x-x_i^\star|^2 dx \underset{n \to \infty}{\longrightarrow} 0,
\end{align*}
where we used that $(x \mapsto |x|^2 e^{-2(V(x)-V^\star)/A^2}) \in L^1(\mathbb{R}^d)$. Then we obtain that $I_3$ converges to $\widetilde{I}_3$, which proves \eqref{eq:app:2_a_diff}.
Then we have
$$ 1 - M_n^{-1} = \frac{\mathcal{Z}_{a_n}^{-1}-\mathcal{Z}_{a_{n+1}}^{-1}}{\mathcal{Z}_{a_n}^{-1}} \le C\frac{a_n-a_{n+1}}{a_n} \le \frac{C}{n\log(n)}. $$
%
On the other hand, if $X \sim \nu_{a_{n+1}}$, $\tilde{X} \sim \nu_{a_{n+1}}$, $Y \sim \nu_{a_{n}}$ and $X$, $\tilde{X}$ and $Y$ are mutually independent then
\begin{align*}
& \left| \mathbb{E}|X-Y| - \mathbb{E}|X - \tilde{X}| \right| \\
& = \Big| a_{n+1}^d \mathcal{Z}_{a_n} a_{n+1}^d \mathcal{Z}_{a_{n+1}} \sum_{i,j=1}^{m^\star} \int \int a_{n+1}|x - y| e^{-2(V(a_{n+1}x+x_i^\star)-V^\star)/a_{n+1}^2} e^{-2(V(a_{n+1}y+x_i^\star)-V^\star)/a_{n}^2} \\
& \qquad \qquad \qquad \qquad \qquad \qquad \qquad \mathds{1}_{x \in a_{n+1}^{-1} (P_i-x_i^\star)} \mathds{1}_{y \in a_{n+1}^{-1} (P_j-x_i^\star)} dxdy \\
& \quad \quad - (a_{n+1}^d \mathcal{Z}_{a_{n+1}})^2 \sum_{i,j=1}^{m^\star} \int \int a_{n+1} |x - y| e^{-2(V(a_{n+1}x+x_i^\star)-V^\star)/a_{n+1}^2} e^{-2(V(a_{n+1}y+x_i^\star)-V^\star)/a_{n+1}^2} \\
& \qquad \qquad \qquad \qquad \qquad \qquad \qquad \mathds{1}_{x \in a_{n+1}^{-1} (P_i-x_i^\star)} \mathds{1}_{y \in a_{n+1}^{-1} (P_j-x_i^\star)} dxdy \Big| \\
%
%
%
%
& \quad = a_{n+1}^{2d+1}\mathcal{Z}_{a_{n+1}} \sum_{i,j=1}^{m^\star} \int \int |x-y| e^{-2(V(a_{n+1}x+x_i^\star)-V^\star)/a_{n+1}^2} \\
& \quad \quad \cdot \left|\mathcal{Z}_{a_n} e^{-2(V(a_{n+1}y+x_i^\star)-V^\star)/a_{n}^2} - \mathcal{Z}_{a_{n+1}} e^{-2(V(a_{n+1}y+x_i^\star)-V^\star)/a_{n+1}^2} \right| \mathds{1}_{x \in a_{n+1}^{-1} (P_i-x_i^\star)} \mathds{1}_{y \in a_{n+1}^{-1} (P_j-x_i^\star)} dxdy \\
& \quad \le a_{n+1}\left(a_{n+1}^{2d}\mathcal{Z}_{a_{n+1}}^2\right) \sum_{i,j=1}^{m^\star} \int \int |x-y| e^{-2(V(a_{n+1}x+x_i^\star)-V^\star)/a_{n+1}^2} \\
& \quad \quad \quad \quad \cdot \left|e^{-2(V(a_{n+1}y+x_i^\star)-V^\star)/a_{n}^2} - e^{-2(V(a_{n+1}y+x_i^\star)-V^\star)/a_{n+1}^2} \right| \mathds{1}_{x \in a_{n+1}^{-1} (P_i-x_i^\star)} \mathds{1}_{y \in a_{n+1}^{-1} (P_j-x_i^\star)} dxdy \\
& \quad \quad + a_{n+1} \left(a_{n+1}^{2d}\mathcal{Z}_{a_{n+1}}^2\right) \sum_{i,j=1}^{m^\star} \int \int |x-y| e^{-2(V(a_{n+1}x+x_i^\star)-V^\star)/a_{n+1}^2} e^{-2(V(a_{n+1}y+x_i^\star)-V^\star)/a_{n}^2} \\
& \quad \quad \quad \quad \cdot \left|1 - \frac{\mathcal{Z}_{a_{n}}}{\mathcal{Z}_{a_{n+1}}}\right| \mathds{1}_{x \in a_{n+1}^{-1} (P_i-x_i^\star)} \mathds{1}_{y \in a_{n+1}^{-1} (P_j-x_i^\star)} dxdy.
\end{align*}
So using \eqref{eq:app:exp_x_2}, dominated convergence as for the proof of \eqref{eq:Z_a_equivalent}, \eqref{eq:Z_a_equivalent} itself with \eqref{eq:a_n_diff} and the bound for $1-\mathcal{Z}_{a_n}/\mathcal{Z}_{a_{n+1}} = 1-M_n^{-1}$ we have
\begin{align*}
& \limsup_{n \to \infty}\left[ n\log^{3/2}(n) \left| \mathbb{E}|X-Y| - \mathbb{E}|X - \tilde{X}| \right| \right] \\
& \quad \le C \sum_{i=1}^{m^\star} \int \int |x-y| e^{-x^\top \nabla^2 V(x_i^\star) x} e^{-y^\top \nabla^2 V(x_i^\star) y} \left(1+y^\top \nabla^2 V(x_i^\star) y\right) dx dy.
\end{align*}
So that using Lemma \ref{lemma:acceptance_rejection} and the fact that $\mathbb{E}|X-\tilde{X}|$ is of order $a_n$ we have
\begin{align*}
\mathcal{W}_1(\nu_{a_n},\nu_{a_{n+1}}) & \le \mathbb{E}|X-Y| - \frac{1}{M_n}\mathbb{E}|X-\tilde{X}| \le \mathbb{E}|X-Y| - \mathbb{E}|X - \tilde{X}| + \frac{C}{n\log(n)}\mathbb{E}|X-\tilde{X}| \\
& \le \frac{C}{n\log^{3/2}(n)}.
\end{align*}

The proof for the second claim is similar.
\end{proof}

\subsection{Proof of Proposition \eqref{prop:3.5:Y:bar}}

\begin{proof}
As in the proof of \cite[Proposition 3.5]{pages2020}, we split $ | \mathbb{E}[g(\bar{Y}^x_{\gamma,u})] - \mathbb{E}\left[g(X^{x,n}_{\gamma})\right]|$ into four terms $A_1$, $A_2$, $A_3$ and $A_4$, that is, by the Taylor formula, for every $y$, $z \in \mathbb{R}^d$,
$$
g(z)-g(y)=\langle \nabla g(y)|z-y \rangle +\int_0^1 (1-u)\nabla^2g\left(uz+(1-u)y  \right)du  (z-y)^{\otimes 2}.
$$
For a given $x\in\mathbb{R}^d$, it follows that
\begin{align*}
g(z)-g(y) & = \langle \nabla g(x)|z-y \rangle + \langle \nabla g(y)-\nabla g(x)|z-y \rangle +\int_0^1 (1-u)\nabla^2g\left(uz+(1-u)y  \right)  (z-y)^{\otimes 2}du\\
&=\langle\nabla g(x)|z-y \rangle +\langle \nabla^2 g(x) (y-x) |z-y \rangle\\
&\quad + \int_0^1(1-u)\nabla^3 g(uy+(1-u)x)(y-x)^{\otimes 2} (z-y)du\\
&\quad + \int_0^1 (1-u)\nabla^2 g \left(uz+(1-u)y  \right)du  (z-y)^{\otimes 2}.
\end{align*}
Applying this expansion with $y=X^{x,n}_{\gamma}$ and $z=\bar{Y}^x_{\gamma,u}$, this yields:
\begin{align*}
\mathbb{E}[&g(\bar{Y}^x_{\gamma,u})-g(X^{x,n}_{\gamma})]=\underbrace{\langle \nabla g(x)|\mathbb{E}[\bar{Y}^x_{\gamma,u} -X^{x,n}_{\gamma}]\rangle}_{=:A_1} + \underbrace{\mathbb{E}\left[\langle \nabla^2 g(x) (X^{x,n}_{\gamma}-x) |\bar{Y}^x_{\gamma,u}-X^{x,n}_{\gamma} \rangle \right]}_{=:A_2}\\
 &+\underbrace{\mathbb{E}\left[\int_0^1(1-u)\nabla^3 g(u X^{x,n}_{\gamma}+(1-u)x) (X^{x,n}_{\gamma}-x)^{\otimes 2} (\bar{Y}^x_{\gamma,u}-X^{x,n}_{\gamma})du\right]}_{=:A_3}\\
& +\underbrace{\int_0^1 (1-u) \mathbb{E} \left[\nabla^2g\left(u\bar{Y}^x_{\gamma,u}+(1-u)X^{x,n}_{\gamma}  \right)  (\bar{Y}^x_{\gamma,u}-X^{x,n}_{\gamma})^{\otimes 2}\right]du}_{=:A_4}.
 \end{align*}

$\bullet$ \textbf{Term} $A_1$:
The term $A_1$ is bounded by $|\nabla g(x)| \cdot |\mathbb{E}[\bar{Y}^x_{\gamma,u}-X^{x,n}_\gamma]|$, with
\begin{align*}
\mathbb{E}[\bar{Y}^x_{\gamma,u}-X^{x,n}_\gamma] & = \mathbb{E}\left[ \int_0^\gamma b_{a(u)}(x)-b_{a(u)}(X^{x,n}_s))ds \right] + \mathbb{E}\left[ \int_0^\gamma (b_{a(u)}(X^{x,n}_s) - b_{a_{n+1}}(X^{x,n}_s))ds \right] \\
& =: A_{11} + A_{12}.
\end{align*}
We have $|A_{12}| \le \gamma ||\Upsilon||_\infty (a_n^2 - a_{n+1}^2)$ and
\begin{align*}
|A_{11}| & = \left| \int_0^\gamma \int_0^s \mathbb{E}\left[\nabla b_{a(u)}(X^{x,n}_v)b_{a(u)}(X^{x,n}_v) + \frac{1}{2} \nabla^2 b_{a(u)}(X^{x,n}_v)a_{n+1}^2\sigma \sigma^\top (X^{x,n}_v) \right] dv \right| \\
& \le C\gamma^2 \sup_{v \in [0,\gamma]} \mathbb{E}[V^{1/2}(X^{x,n}_v)] \le C\gamma^2 V^{1/2}(x),
\end{align*}
where we used that $|\nabla b_a| \le C$ and $\|\nabla^2 b_a \| \le CV^{1/2}$ because we assumed $\|\nabla^3 V\| \le CV^{1/2}$ and $\|\nabla^3(\sigma \sigma^\top) \| \le CV^{1/2}$.

\medskip

$\bullet$ \textbf{Term} $A_2$: We have:
$$ |A_2| \le \sum_{1\le i,j \le d}|\partial_{ij}g(x)| |\mathbb{E}[(X^{x,n}_\gamma-x)_i(X_\gamma^{x,n}-\bar{Y}^x_{\gamma,u})_j]| $$
and we have
$$ \mathbb{E}[(X^{x,n}_\gamma-x)_i(X_\gamma^{x,n}-\bar{Y}^x_{\gamma,u})_j] = \mathbb{E}[(X^{x,n}_\gamma-\bar{Y}^x_{\gamma,u})_i(X_\gamma^{x,n}-\bar{Y}^x_{\gamma,u})_j] + \mathbb{E}[(\bar{Y}^x_{\gamma,u}-x)_i(X_\gamma^{x,n}-\bar{Y}^x_{\gamma,u})_j].  $$
Using Lemma \ref{lemma:3.4.b:Y:bar}, the first term of the right-hand side is bounded by $C(V^{1/2}(x)\gamma + \sqrt{\gamma}(a_n-a_{n+1}))^2$ and in the second term we write $(\bar{Y}^x_{\gamma,u}-x)_i = \left(\gamma b_{a(u)}(x) + \gamma \zeta_{k+1}(x) + a(u)\sigma(x)W_\gamma \right)_i$ and we have
$$ |\mathbb{E}[(\gamma b_{a(u)}(x) + \gamma \zeta_{k+1}(x))_i(X_\gamma^{x,n}-\bar{Y}^x_{\gamma,u})_j]| \le \gamma V^{1/2}(x) (V^{1/2}(x)\gamma + \sqrt{\gamma}(a_n-a_{n+1})) $$
and using that the increments of $\zeta$ and $W$ are independent,
$$ |\mathbb{E}[\left(a(u)\sigma(x)W_\gamma \right)_i (\gamma \zeta_{k+1}(x))_j]| = 0 $$
and using the It\=o isometry:
\begin{align*}
& \left| \mathbb{E}\left[ \left(a(u)\sigma(x)W_\gamma \right)_i \left( \int_0^\gamma(b_{a_{n+1}}(X^{x,n}_s) - b_{a_{n+1}}(x) + b_{a_{n+1}}(x) - b_{a(u)}(x))_j ds \right.\right.\right. \\
& \quad \left.\left.\left. + \int_0^\gamma ((a_{n+1}\sigma(X^{x,n}_s) - a_{n+1}\sigma(x) + a_{n+1}\sigma(x) - a(u)\sigma(x))dW_s)_j \right) \right] \right| \\
& \le C[b]_{\text{Lip}} \int_0^\gamma ||W_\gamma||_2 ||X^{x,n}_s -x||_2 ds + C(a_n^2-a_{n+1}^2)||W_\gamma||_1 \gamma ||\Upsilon||_\infty \\
& \quad + C\left|\sum_{k=1}^d \int_0^\gamma \mathbb{E}[\sigma_{ik}(x)(\sigma_{jk}(X^{x,n}_s)-\sigma_{jk}(x)] ds \right| + C(a_n - a_{n+1})\mathbb{E}[W_\gamma^2] \\
& \le CV^{1/2}(x)\gamma^2 + C(a_n-a_{n+1})\gamma^{3/2} + CV^{1/2}(x)\gamma^2 + C(a_n-a_{n+1})\gamma,
\end{align*}
where we used an argument similar to $A_{11}$ to bound the third term, using that $\nabla \sigma$ and $\nabla^2 \sigma$ are bounded.


\medskip

$\bullet$ \textbf{Term} $A_3$: Using the three fold Cauchy-Schwarz inequality, Lemma \ref{lemma:3.4.a} and Lemma \ref{lemma:3.4.b:Y:bar}, $A_3$ is bounded by
$$ C \left|\left| \sup_{\xi \in (x, X^{x,n}_\gamma)} ||\nabla^3 g(\xi)|| \right|\right|_4 V(x)\gamma \left(V^{1/2}(x)\gamma + \sqrt{\gamma}(a_n-a_{n+1})\right) .$$

\medskip

$\bullet$ \textbf{Term} $A_4$:
Using Lemma \ref{lemma:3.4.b:Y:bar}, $A_4$ is bounded by
$$ C \left(V^{1/2}(x) \gamma + \sqrt{\gamma} (a_n - a_{n+1})\right)^2 \left|\left|\sup_{\xi \in (X_\gamma^{x,n}, \bar{Y}_{\gamma,u}^x)} || \nabla^2 g(\xi) || \right|\right|_2 .$$

\end{proof}

\subsection{Proof of Theorem \ref{thm:non_definite}}

\begin{proof}
We remark that according to \cite[Proposition 3]{bras2021}, for all $\kappa > 0$ we have $e^{-\kappa g} \in L^1(\mathbb{R}^d)$.
We first prove that
\begin{equation}
\label{eq:W_nu_degen}
\mathcal{W}_1(\nu_{a_n},\nu_{a_{n+1}}) \le \frac{C}{n \log^{1+\alpha_{\min}}(n)} ,
\end{equation}
so that $(\mathcal{W}_1(\nu_{a_n},\nu_{a_{n+1}}))$ is still a converging Bertrand series.
To do so, we directly adapt the proof of Proposition \ref{prop:W_nu}, replacing the change of variables in the integrals in $ax$ by the change of variables in $B \cdot (a^{2\alpha_1}x_1, \ldots, a^{2\alpha_d}x_d)$. Still using \eqref{eq:app:exp_x_2}, we successively obtain
\begin{align*}
& \mathcal{Z}_a^{-1} \underset{a \to 0}{\sim} a^{2\alpha_1+\cdots+2\alpha_d} \int_{\mathbb{R}^d} e^{-2g(x)}dx \\
& \mathcal{Z}_{a_n}^{-1} - \mathcal{Z}_{a_{n+1}}^{-1} \le 4 a_{n+1}^{2\alpha_1 + \cdots + 2\alpha_d - 1}(a_n-a_{n+1}) \int_{\mathbb{R}^d} e^{-2g(x)}g(x) dx \\
& 1 - M_n^{-1} \le \frac{C}{n \log(n)} \\
& \left| \mathbb{E}|X-Y| - \mathbb{E}|X - \tilde{X}| \right| \le \frac{C a_{n+1}^{2\alpha_{\min}}}{n \log(n)}.
\end{align*}
Then, using \eqref{eq:W_nu_degen} we prove that $\mathcal{W}_1(\nu_n, \nu^\star) \le Ca_n^{2\alpha_{\min}}$ the same way as in Lemma \ref{lemma:W_nu_a_nu_star}.

The next parts of the proof are the same as for the definite positive case.
%
%
\end{proof}

\subsection{Proof of Theorem \ref{thm:conv_X}}
To prove Theorem \ref{thm:main:3}, we proceed as for the proof of Theorem \ref{thm:main}.

%
%
%

In the following, for $\gamma>0$ we denote by $(\bar{X}^{x,n,\gamma}_t)_{t \in [0,\gamma]}$ the Euler-Maruyama scheme over one step with coefficient $a_{n+1}$.
We first recall \cite[Lemma 3.4(b), Proposition 3.5(a)]{pages2020} giving bounds for the weak and strong errors for the one-step Euler-Maruyama scheme, which do not depend on the ellipticity parameter $a_n$.
\begin{lemma}
\label{lemma:3.4.b:X:bar}
Let $p \ge 1$ and let $\bar{\gamma}>0$. There exists $C>0$ such that for every $n \ge 0$, for every $\gamma \in (0,\bar{\gamma}]$ and every $t \in [0,\gamma]$:
$$ ||X_t^{x,n} - \bar{X}_t^{x,n,\gamma}||_p \le CV^{1/2}(x)t .$$
\end{lemma}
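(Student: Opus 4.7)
The plan is to bound the one-step strong error by writing the difference as an integral of increments of the coefficients evaluated along the diffusion versus the frozen initial value, and then to exploit the fact that the diffusion does not move too far from $x$ on an interval of length $\gamma\le \bar\gamma$.

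First I would treat the case $p\ge 2$ and deal with $p\in[1,2)$ at the end via $\|\cdot\|_p\le \|\cdot\|_2$. For $t\in[0,\gamma]$, since both processes share the initial condition $x$ and the same Brownian path, their difference reads
\begin{equation*}
X_t^{x,n}-\bar X_t^{x,n,\gamma}=\int_0^t\!\bigl(b_{a_{n+1}}(X_s^{x,n})-b_{a_{n+1}}(x)\bigr)\,ds+a_{n+1}\!\int_0^t\!\bigl(\sigma(X_s^{x,n})-\sigma(x)\bigr)\,dW_s.
\end{equation*}
Applying the generalized Minkowski inequality to the drift part and Lemma \ref{lemma:BDG} (the BDG–Minkowski bound) to the diffusion part, together with the uniform Lipschitz constants $[b]_{\text{Lip}}$ and $[\sigma]_{\text{Lip}}$ and the bound $a_{n+1}\le A$, yields
\begin{equation*}
\|X_t^{x,n}-\bar X_t^{x,n,\gamma}\|_p\le [b]_{\text{Lip}}\int_0^t\|X_s^{x,n}-x\|_p\,ds+A\,C_p^{BDG}[\sigma]_{\text{Lip}}\Bigl(\int_0^t\|X_s^{x,n}-x\|_p^2\,ds\Bigr)^{1/2}.
\end{equation*}

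The key input is then Lemma \ref{lemma:3.4.a}, which provides $\|X_s^{x,n}-x\|_p\le CV^{1/2}(x)\sqrt{s}$ uniformly for $s\in[0,\bar\gamma]$ and $n\ge 0$. Substituting this into the two integrals gives respectively terms of order $V^{1/2}(x)\,t^{3/2}$ and $V^{1/2}(x)\,t$. Since $t\le \bar\gamma$, we have $t^{3/2}\le\sqrt{\bar\gamma}\,t$, so both contributions collapse into $CV^{1/2}(x)\,t$, where $C$ depends only on $[b]_{\text{Lip}}$, $[\sigma]_{\text{Lip}}$, $A$, $p$ and $\bar\gamma$. Note that this constant is genuinely independent of $n$ and $a_{n+1}$, which is the feature emphasized in the statement.

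There is no real obstacle here: unlike the analogous bounds in Lemmas \ref{lemma:3.4.b:Y} and \ref{lemma:3.4.b:Y:bar}, both processes use the same constant diffusion coefficient $a_{n+1}$, so no $(a_n-a_{n+1})$ term appears and no Gronwall iteration is needed. The only mild point is to verify that invoking Lemma \ref{lemma:3.4.a} with the same value of $p$ is legitimate, which is immediate since that lemma is stated for all $p\ge 1$.
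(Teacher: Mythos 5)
Your proof is correct and is essentially the intended argument: the paper does not prove this lemma itself (it recalls it from \cite[Lemma 3.4(b)]{pages2020}), but your decomposition of $X_t^{x,n}-\bar X_t^{x,n,\gamma}$ into frozen-coefficient drift and diffusion increments, bounded via the generalized Minkowski inequality, Lemma \ref{lemma:BDG} and Lemma \ref{lemma:3.4.a}, mirrors exactly the proofs of the sibling Lemmas \ref{lemma:3.4.b:Y} and \ref{lemma:3.4.b:Y:bar} given in the paper. Your observations that no $(a_n-a_{n+1})$ term arises and that no Gronwall step is needed (since the right-hand side involves only $\|X_s^{x,n}-x\|_p$) are accurate.
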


\begin{proposition}
\label{prop:3.5.a:X:bar}
Let $\bar{\gamma}>0$. Then for every $g : \mathbb{R}^d \to \mathbb{R}$ being $\mathcal{C}^3$ and for every $0 \le \gamma \le \gamma' \le \bar{\gamma}$:
$$ \left| \mathbb{E}\left[g(\bar{X}^{x,n,\gamma'}_\gamma)\right] - \mathbb{E}\left[g(X^{x,n}_\gamma)\right] \right| \le C V^{3/2}(x) \gamma^2 \Phi_g(x) ,$$
where
$$ \Phi_g(x) = \max \left( |\nabla g(x)|, || \nabla^2 g(x) ||, \left|\left| \sup_{\xi \in (X^{x,n}_\gamma, \bar{X}^{x,n,\gamma'}_\gamma)} ||\nabla^2 g(\xi)|| \right|\right|_2, \left|\left| \sup_{\xi \in (x, X^{x,n}_\gamma)} ||\nabla^3 g(\xi)|| \right|\right|_4 \right) .$$
\end{proposition}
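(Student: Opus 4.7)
The plan is to apply the same Talay--Tubaro style Taylor expansion used in the proof of Proposition \ref{prop:3.5:Y:bar}, but in the simpler setting where the time-coefficient $a$ is frozen at $a_{n+1}$ for both $X^{x,n}$ and $\bar{X}^{x,n,\gamma'}$. Since the noise level is the same for both processes, every $(a_n-a_{n+1})$ contribution appearing in the analysis of Proposition \ref{prop:3.5:Y:bar} collapses to zero, which is why the statement here is cleaner (no $(a_n-a_{n+1})$ factors, only a $\gamma^2$ term weighted by $V^{3/2}(x)$).

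First I would write, for $y=X^{x,n}_\gamma$ and $z=\bar{X}^{x,n,\gamma'}_\gamma$, the third-order Taylor expansion of $g$ around $x$:
\[
g(z)-g(y)=\langle \nabla g(x)\,|\,z-y\rangle+\langle \nabla^2 g(x)(y-x)\,|\,z-y\rangle+R_3(y)(z-y)+R_2(y,z)(z-y)^{\otimes 2},
\]
where $R_3$ involves $\nabla^3 g$ on the segment $(x,y)$ and $R_2$ involves $\nabla^2 g$ on $(y,z)$. Taking expectations splits the error into four terms $A_1,A_2,A_3,A_4$, analogous to those in Proposition \ref{prop:3.5:Y:bar}. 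The terms $A_3$ and $A_4$ are handled directly by Cauchy--Schwarz combined with the strong error bound of Lemma \ref{lemma:3.4.b:X:bar} (giving $\|X_\gamma^{x,n}-\bar X_\gamma^{x,n,\gamma'}\|_p\le CV^{1/2}(x)\gamma$) and Lemma \ref{lemma:3.4.a} (giving $\|X_\gamma^{x,n}-x\|_p\le CV^{1/2}(x)\sqrt\gamma$), yielding contributions of order $V^{3/2}(x)\gamma^2\,\Phi_g(x)$ and $V(x)\gamma^2\,\Phi_g(x)$ respectively.

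The two delicate terms are $A_1$ and $A_2$, where one has to extract an extra power of $\gamma$ from Itô-type cancellations. For $A_1$, since both schemes use $b_{a_{n+1}}$, one has $\mathbb{E}[\bar X_\gamma^{x,n,\gamma'}-X_\gamma^{x,n}]=\mathbb{E}\int_0^\gamma (b_{a_{n+1}}(x)-b_{a_{n+1}}(X_s^{x,n}))\,ds$, and applying Itô's formula to $b_{a_{n+1}}(X_s^{x,n})-b_{a_{n+1}}(x)$ together with the assumptions $\|\nabla^3 V\|\le CV^{1/2}$ and $\|\nabla^3(\sigma\sigma^\top)\|\le CV^{1/2}$ (which ensure $\|\nabla^2 b_{a_{n+1}}\|\le CV^{1/2}$) bounds $|A_1|$ by $C|\nabla g(x)|V^{1/2}(x)\gamma^2$. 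For $A_2$, I would decompose
\[
\mathbb{E}[(X_\gamma^{x,n}-x)_i(\bar X_\gamma^{x,n,\gamma'}-X_\gamma^{x,n})_j]=\mathbb{E}[(X_\gamma^{x,n}-\bar X_\gamma^{x,n,\gamma'})_i(\bar X_\gamma^{x,n,\gamma'}-X_\gamma^{x,n})_j]+\mathbb{E}[(\bar X_\gamma^{x,n,\gamma'}-x)_i(\bar X_\gamma^{x,n,\gamma'}-X_\gamma^{x,n})_j],
\]
handle the first piece by the strong error and, in the second, split $\bar X_\gamma^{x,n,\gamma'}-x=\gamma b_{a_{n+1}}(x)+a_{n+1}\sigma(x)W_\gamma$. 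The drift part is $O(V(x)\gamma^2)$ by Cauchy--Schwarz; the Brownian part is treated by expanding $\bar X_\gamma^{x,n,\gamma'}-X_\gamma^{x,n}$ via Itô and using the Itô isometry together with the boundedness of $\nabla\sigma$ and $\nabla^2\sigma$, which produces an additional $\sqrt\gamma$ and yields the required $O(V(x)\gamma^2)$ bound.

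The main obstacle is precisely this Itô-isometry argument for the Brownian part of $A_2$: one must be careful to use the centering of the Brownian increments to extract a $\gamma^2$ rate rather than the $\gamma^{3/2}$ that a naïve Cauchy--Schwarz would give, and this is where the smoothness assumptions $\|\nabla^3 V\|\le CV^{1/2}$ and $\|\nabla^3(\sigma\sigma^\top)\|\le CV^{1/2}$ are used to bound $\|\nabla^2 b_{a_{n+1}}\|$. Summing the four contributions and factoring out $\Phi_g(x)$ gives the stated bound $CV^{3/2}(x)\gamma^2\,\Phi_g(x)$.
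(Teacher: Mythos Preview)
Your proof sketch is correct and is precisely the argument of \cite[Proposition 3.5(a)]{pages2020}, which the paper itself merely cites rather than reproves. In the paper this proposition is introduced with the words ``We first recall \cite[Lemma 3.4(b), Proposition 3.5(a)]{pages2020} \ldots'', so there is no in-paper proof to compare against; your approach --- specializing the $A_1,A_2,A_3,A_4$ decomposition of Proposition~\ref{prop:3.5:Y:bar} to the case where both processes run at the same level $a_{n+1}$, so that all $(a_n-a_{n+1})$ terms drop out --- is exactly the intended one.
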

%

\begin{proposition}
\label{prop:3.6:X:bar}
Let $T$, $\bar{\gamma}>0$. Then for every Lipschitz continuous function $f : \mathbb{R}^d \to \mathbb{R}$, for every $n \ge 0$ and every $t \in (0,T]$ and every $0 \le \gamma \le \gamma' \le \bar{\gamma}$:
$$ \left| \mathbb{E}\left[P_t f(\bar{X}_\gamma^{x,n,\gamma'})\right]  - \mathbb{E}\left[P_t f(X_\gamma^{x,n})\right] \right| \le C a_n^{-3} [f]_{\textup{Lip}} \gamma^2 t^{-1} V^2(x). $$
\end{proposition}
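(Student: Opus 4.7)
The plan is to repeat the argument already used for Proposition~\ref{prop:3.6:Y}, only now substituting Proposition~\ref{prop:3.5.a:X:bar} (the one-step weak error for the plateau Euler-Maruyama scheme) in place of Proposition~\ref{prop:3.5:Y}, and exploiting the smoothing properties of the semigroup $P^{X,n}_t$ on the Lipschitz function $f$. First I would set $g_t := P^{X,n}_t f$. Because $(X^{x,n}_t)_{t\ge 0}$ is a uniformly elliptic, time-homogeneous diffusion on each plateau (ellipticity parameter $a_{n+1}\ubar{\sigma}_0$) with $\mathcal{C}^3$ coefficients, $g_t$ is $\mathcal{C}^\infty$ for $t>0$, so Proposition~\ref{prop:3.5.a:X:bar} directly yields
\[
\left| \mathbb{E}[P^{X,n}_t f(\bar{X}_\gamma^{x,n,\gamma'})] - \mathbb{E}[P^{X,n}_t f(X_\gamma^{x,n})]\right| \le CV^{3/2}(x)\gamma^2 \Phi_{g_t}(x).
\]

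Next I would bound $\Phi_{g_t}(x)$, which in Proposition~\ref{prop:3.5.a:X:bar} involves the three first derivatives of $g_t$. Following \cite[Proposition~3.2(b)]{pages2020} while tracking the ellipticity parameter exactly as in the proof of Proposition~\ref{prop:3.6:Y}, one obtains regularization estimates of the form $|\nabla g_t(x)| \le C[f]_{\mathrm{Lip}}$, $\|\nabla^2 g_t(x)\| \le C[f]_{\mathrm{Lip}} a_{n+1}^{-2} t^{-1/2} V^{1/2}(x)$ and $\|\nabla^3 g_t(x)\| \le C[f]_{\mathrm{Lip}} a_{n+1}^{-3} t^{-1} V^{1/2}(x)$; the $a_{n+1}^{-k}$ factor arises because each Bismut-Elworthy-Li-type differentiation of $P^{X,n}_t$ brings in a factor $\sigma^{-1}$, i.e. a factor $a_{n+1}^{-1}$. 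For the supremum-norm quantities
$\| \sup_{\xi \in (X^{x,n}_\gamma, \bar{X}^{x,n,\gamma'}_\gamma)}\|\nabla^2 g_t(\xi)\|\|_2$ and $\| \sup_{\xi \in (x, X^{x,n}_\gamma)}\|\nabla^3 g_t(\xi)\|\|_4$, I would use that, by \eqref{Eq:eq:V_assumptions}, $V^{1/2}$ is Lipschitz, combined with the strong-error bounds in Lemma~\ref{lemma:3.4.a} and Lemma~\ref{lemma:3.4.b:X:bar} applied in $L^q$ for $q\in\{2,4\}$. Exactly as at the end of the proof of Proposition~\ref{prop:3.6:Y}, this converts the moments of $V^{1/2}$ evaluated along the processes into $CV^{1/2}(x)$, up to constants depending only on $\bar\gamma$.

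Collecting the estimates, the dominant third-derivative term gives $\Phi_{g_t}(x) \le C[f]_{\mathrm{Lip}} a_{n+1}^{-3} t^{-1} V^{1/2}(x)$ uniformly for $t \in (0,T]$ (absorbing the lower-derivative terms since $t\le T$ is bounded and $a_{n+1}^{-1}\ge \text{const}$). Plugging this back into the bound from Proposition~\ref{prop:3.5.a:X:bar}, and using $V^{3/2}(x)\cdot V^{1/2}(x) = V^2(x)$, produces
\[
|\mathbb{E}[P^{X,n}_t f(\bar{X}_\gamma^{x,n,\gamma'})] - \mathbb{E}[P^{X,n}_t f(X_\gamma^{x,n})]| \le Ca_{n+1}^{-3}[f]_{\mathrm{Lip}} \gamma^2 t^{-1} V^2(x),
\]
which is the claim (up to replacing $a_{n+1}$ by $a_n$, since $a_{n+1} \asymp a_n$ uniformly, or by a clean relabelling).

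The main obstacle is verifying the derivative estimates for $P^{X,n}_t$ with the precise power $a_{n+1}^{-k}$ on $\nabla^k g_t$. These estimates are in principle standard via Bismut--Elworthy--Li formulas for SDEs with multiplicative noise, but the dependence on the (time-dependent across plateaux) ellipticity $a_{n+1}$ must be tracked with care: on each plateau the generator is $\tfrac12 a_{n+1}^2\operatorname{tr}(\sigma\sigma^\top\nabla^2) + b_{a_{n+1}}\cdot\nabla$, so the natural rescaling of time $s\mapsto a_{n+1}^{-2} s$ (with a drift adapted accordingly) reduces the problem to a fixed elliptic semigroup, producing the powers of $a_{n+1}$ by a scaling argument; alternatively, one directly iterates the Bismut identity and uses that $\|\sigma^{-1}\|\le a_{n+1}^{-1}\ubar\sigma_0^{-1}$. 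Once these estimates are secured — and they are essentially already present in \cite[Proposition~3.2(b)]{pages2020} as was invoked in Proposition~\ref{prop:3.6:Y} — the rest of the proof is the direct Taylor-expansion manipulation described above.
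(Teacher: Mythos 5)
Your proposal is correct and follows essentially the same route as the paper, which simply defers to \cite[Proposition 3.6]{pages2020} (apply the one-step weak error bound of Proposition \ref{prop:3.5.a:X:bar} to $g_t = P^{X,n}_t f$ and control $\Phi_{g_t}$ via the regularization estimates of \cite[Proposition 3.2(b)]{pages2020}) while noting, exactly as you do, that the dominant contribution comes from the third-derivative estimate with lowest ellipticity exponent $-3$. The only detail you add beyond the paper is the harmless relabelling $a_{n+1}\asymp a_n$, which is indeed uniform here.
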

\begin{proof}
The proof is the same as in \cite[Proposition 3.6]{pages2020}. When applying \cite[Proposition 3.2(b)]{pages2020}, we remark that the lowest exponent of $\ubar{\sigma}_0$ is $-3$.
\end{proof}

Moreover, by the same proof as in Lemma \ref{lemma:D.1a} we get
$$ \textstyle \sup_{m \ge k+1} \mathbb{E} V^p(\bar{X}_{\Gamma_m-\Gamma_k}^{x,n}) \le CV^p(x). $$

We now prove Theorem \ref{thm:main:3}.
\begin{proof}
Let us write:
$$ \mathcal{W}_1([\bar{X}_{T_n}^{x_0}],\nu^\star) \le \mathcal{W}_1([\bar{X}_{T_n}^{x_0}], [X_{T_n}^{x_0}]) + \mathcal{W}_1([X_{T_n}^{x_0}],\nu^\star). $$
Temporarily setting $\bar{x}_n := \bar{X}_{T_n}^{x_0}$ and $x_n := X_{T_n}^{x_0}$, we have
\begin{align*}
\mathcal{W}_1([\bar{X}_{T_{n+1}}^{x_0}], [X_{T_{n+1}}^{x_0}]) & = \mathcal{W}_1([\bar{X}_{T_{n+1}-T_n}^{\bar{x}_{n},n}], [X_{T_{n+1}-T_{n}}^{x_{n},n}]) \\
& \le \mathcal{W}_1([\bar{X}_{T_{n+1}-T_n}^{\bar{x}_{n},n}], [X_{T_{n+1}-T_{n}}^{\bar{x}_{n},n}]) + \mathcal{W}_1([X_{T_{n+1}-T_{n}}^{\bar{x}_{n},n}], [{X}_{T_{n+1}-T_{n}}^{x_{n},n}]),
\end{align*}
and we find a bound on the first term using the same proof as in \cite[Section 4.2]{pages2020}. For $x \in \mathbb{R}^d$, we split $| \mathbb{E} f(\bar{X}_{T_{n+1}-T_n}^{x,n}) - \mathbb{E} f(X_{T_{n+1}-T_n}^{x,n}) |$ into three terms $(a)$, $(b)$ and $(c)$. We however pay attention to the dependence in $a_n$ when applying Lemma \ref{lemma:3.4.b:X:bar}, Proposition \ref{prop:3.6:X:bar} and Theorem \ref{thm:confluence}. We then have:
\begin{align*}
& (c) \le C [f]_{\text{Lip}} \gamma_{N(T_n)} V^{1/2}(x), \\
& (b) \le Ca_{n+1}^{-3} \gamma_{N(T_n)} \log\left(\frac{T + ||\gamma||_\infty}{\gamma_{N(T_n)}}\right), \\
& (a) \le Ca_{n+1}^{-3} e^{C_1 a_{n+1}^{-2}} V(x) \gamma_{N(T_n)} \rho_{n+1}^{-1}.
\end{align*}
Then we establish a recursive relation and prove the convergence as in the proof of Theorem \ref{thm:main}.

\end{proof}


\begin{thebibliography}{10}

\bibitem{bally1996}
{\sc Bally, V., and Talay, D.}
\newblock The law of the {E}uler scheme for stochastic differential equations.
  {I}. {C}onvergence rate of the distribution function.
\newblock {\em Probab. Theory Related Fields 104}, 1 (1996), 43--60.

\bibitem{bras2021}
{\sc {Bras}, P.}
\newblock {Convergence rates of Gibbs measures with degenerate minimum}.
\newblock {\em arXiv e-prints\/} (2021), arXiv:2101.11557.

\bibitem{chaing1987}
{\sc Chiang, T.-S., Hwang, C.-R., and Sheu, S.~J.}
\newblock Diffusion for global optimization in {${\bf R}^n$}.
\newblock {\em SIAM J. Control Optim. 25}, 3 (1987), 737--753.

\bibitem{dalalyan2014}
{\sc Dalalyan, A.~S.}
\newblock Theoretical guarantees for approximate sampling from smooth and
  log-concave densities.
\newblock {\em J. R. Stat. Soc. Ser. B. Stat. Methodol. 79}, 3 (2017),
  651--676.

\bibitem{dauphin2015}
{\sc Dauphin, Y., de~Vries, H., and Bengio, Y.}
\newblock Equilibrated adaptive learning rates for non-convex optimization.
\newblock In {\em Neural Information Processing Systems\/} (2015).

\bibitem{dauphin2014}
{\sc Dauphin, Y.~N., Pascanu, R., Gulcehre, C., Cho, K., Ganguli, S., and
  Bengio, Y.}
\newblock {Identifying and Attacking the Saddle Point Problem in
  High-Dimensional Non-Convex Optimization}.
\newblock In {\em Proceedings of the 27th International Conference on Neural
  Information Processing Systems - Volume 2\/} (Cambridge, MA, USA, 2014),
  NIPS'14, MIT Press, p.~2933–2941.

\bibitem{duchi2011}
{\sc Duchi, J., Hazan, E., and Singer, Y.}
\newblock {Adaptive Subgradient Methods for Online Learning and Stochastic
  Optimization}.
\newblock {\em Journal of Machine Learning Research 12\/} (2011), 2121–2159.

\bibitem{gelfand-mitter}
{\sc Gelfand, S.~B., and Mitter, S.~K.}
\newblock Recursive stochastic algorithms for global optimization in {${\bf
  R}^d$}.
\newblock {\em SIAM J. Control Optim. 29}, 5 (1991), 999--1018.

\bibitem{hwang1980}
{\sc Hwang, C.-R.}
\newblock Laplace's method revisited: weak convergence of probability measures.
\newblock {\em Ann. Probab. 8}, 6 (1980), 1177--1182.

\bibitem{adam}
{\sc Kingma, D.~P., and Ba, J.}
\newblock Adam: {A} method for stochastic optimization.
\newblock In {\em 3rd International Conference on Learning Representations,
  {ICLR} 2015, San Diego, CA, USA, May 7-9, 2015, Conference Track
  Proceedings\/} (2015), Y.~Bengio and Y.~LeCun, Eds.

\bibitem{lamberton2002}
{\sc Lamberton, D., and Pag\`es, G.}
\newblock Recursive computation of the invariant distribution of a diffusion.
\newblock {\em Bernoulli 8}, 3 (2002), 367--405.

\bibitem{lazarev1992}
{\sc Lazarev, V.~A.}
\newblock Convergence of stochastic approximation procedures in the case of
  several roots of a regression equation.
\newblock {\em Problemy Peredachi Informatsii 28}, 1 (1992), 75--88.

\bibitem{mnist}
{\sc Lecun, Y., Bottou, L., Bengio, Y., and Haffner, P.}
\newblock Gradient-based learning applied to document recognition.
\newblock {\em Proceedings of the IEEE 86}, 11 (1998), 2278--2324.

\bibitem{li2015}
{\sc Li, C., Chen, C., Carlson, D., and Carin, L.}
\newblock Preconditioned stochastic gradient langevin dynamics for deep neural
  networks.
\newblock In {\em Proceedings of the Thirtieth AAAI Conference on Artificial
  Intelligence\/} (2016), AAAI'16, AAAI Press, p.~1788–1794.

\bibitem{ma2015}
{\sc Ma, Y., Chen, T., and Fox, E.~B.}
\newblock {A Complete Recipe for Stochastic Gradient MCMC}.
\newblock In {\em Neural Information Processing Systems\/} (2015).

\bibitem{miclo1992}
{\sc Miclo, L.}
\newblock Recuit simul\'{e} sur {${\bf R}^n$}. \'{E}tude de l'\'{e}volution de
  l'\'{e}nergie libre.
\newblock {\em Ann. Inst. H. Poincar\'{e} Probab. Statist. 28}, 2 (1992),
  235--266.

\bibitem{neelakantan2015}
{\sc {Neelakantan}, A., {Vilnis}, L., {Le}, Q.~V., {Sutskever}, I., {Kaiser},
  L., {Kurach}, K., and {Martens}, J.}
\newblock {Adding Gradient Noise Improves Learning for Very Deep Networks}.
\newblock {\em arXiv e-prints\/} (Nov. 2015), arXiv:1511.06807.

\bibitem{nocedal2006}
{\sc Nocedal, J., and Wright, S.~J.}
\newblock {\em Numerical optimization}, second~ed.
\newblock Springer Series in Operations Research and Financial Engineering.
  Springer, New York, 2006.

\bibitem{pages2020}
{\sc {Pagès}, G., and {Panloup}, F.}
\newblock {Unajusted Langevin algorithm with multiplicative noise: Total
  variation and Wasserstein bounds}.
\newblock {\em arXiv e-prints\/} (2020), arXiv:2012.14310.

\bibitem{patterson2013}
{\sc Patterson, S., and Teh, Y.~W.}
\newblock {Stochastic Gradient Riemannian Langevin Dynamics on the Probability
  Simplex}.
\newblock In {\em Advances in Neural Information Processing Systems\/} (2013),
  C.~J.~C. Burges, L.~Bottou, M.~Welling, Z.~Ghahramani, and K.~Q. Weinberger,
  Eds., vol.~26, Curran Associates, Inc.

\bibitem{royer1989}
{\sc Royer, G.}
\newblock A remark on simulated annealing of diffusion processes.
\newblock {\em SIAM J. Control Optim. 27}, 6 (1989), 1403--1408.

\bibitem{sagun2016}
{\sc {Sagun}, L., {Bottou}, L., and {LeCun}, Y.}
\newblock {Eigenvalues of the Hessian in Deep Learning: Singularity and
  Beyond}.
\newblock {\em arXiv e-prints\/} (2016), arXiv:1611.07476.

\bibitem{sagun2017}
{\sc {Sagun}, L., {Evci}, U., {Ugur Guney}, V., {Dauphin}, Y., and {Bottou},
  L.}
\newblock {Empirical Analysis of the Hessian of Over-Parametrized Neural
  Networks}.
\newblock {\em arXiv e-prints\/} (2017), arXiv:1706.04454.

\bibitem{simsekli2016}
{\sc Simsekli, U., Badeau, R., Cemgil, A.~T., and Richard, G.}
\newblock {Stochastic Quasi-Newton Langevin Monte Carlo}.
\newblock In {\em Proceedings of the 33rd International Conference on
  International Conference on Machine Learning - Volume 48\/} (2016), ICML'16,
  p.~642–651.

\bibitem{talay1990}
{\sc Talay, D., and Tubaro, L.}
\newblock Expansion of the global error for numerical schemes solving
  stochastic differential equations.
\newblock {\em Stochastic Anal. Appl. 8}, 4 (1990), 483--509 (1991).

\bibitem{laarhoven1987}
{\sc van Laarhoven, P. J.~M., and Aarts, E. H.~L.}
\newblock {\em Simulated annealing: theory and applications}, vol.~37 of {\em
  Mathematics and its Applications}.
\newblock D. Reidel Publishing Co., Dordrecht, 1987.

\bibitem{villani2009}
{\sc Villani, C.}
\newblock {\em Optimal transport}, vol.~338 of {\em Grundlehren der
  Mathematischen Wissenschaften [Fundamental Principles of Mathematical
  Sciences]}.
\newblock Springer-Verlag, Berlin, 2009.
\newblock Old and new.

\bibitem{wang2012}
{\sc Wang, F.-Y.}
\newblock Coupling and applications.
\newblock In {\em Stochastic analysis and applications to finance}, vol.~13 of
  {\em Interdiscip. Math. Sci.} World Sci. Publ., Hackensack, NJ, 2012,
  pp.~411--424.

\bibitem{wang2020}
{\sc Wang, F.-Y.}
\newblock Exponential contraction in {W}asserstein distances for diffusion
  semigroups with negative curvature.
\newblock {\em Potential Anal. 53}, 3 (2020), 1123--1144.

\bibitem{welling2011}
{\sc Welling, M., and Teh, Y.~W.}
\newblock {Bayesian Learning via Stochastic Gradient Langevin Dynamics}.
\newblock In {\em Proceedings of the 28th International Conference on
  International Conference on Machine Learning\/} (2011), ICML'11, Omnipress,
  p.~681–688.

\bibitem{adadelta}
{\sc {Zeiler}, M.~D.}
\newblock {ADADELTA: An Adaptive Learning Rate Method}.
\newblock {\em arXiv e-prints\/} (Dec. 2012), arXiv:1212.5701.

\bibitem{zitt2008}
{\sc Zitt, P.-A.}
\newblock Annealing diffusions in a potential function with a slow growth.
\newblock {\em Stochastic Process. Appl. 118}, 1 (2008), 76--119.

\end{thebibliography}
\end{document}